\newtheorem{theorem}{Theorem}[section]
\newtheorem{lemma}{Lemma}[section]
\newtheorem{corollary}{Corollary}[section]
\newtheorem{proposition}{Proposition}[section]
\newtheorem{conjecture}{Conjecture}[section]
\newtheorem{definition}{Definition}[section]
\numberwithin{equation}{section}
\def\R{\mathbb R}
\def\Om{\Omega}
\def\L{\Lambda}
\def\om{\omega}
\def\d{\partial}
\def\e{\epsilon}
\def\a{\alpha}
\def\b{\beta}
\def\g{\gamma}
\def\l{\lambda}
\title{Contact de Rham cohomology \& Hodge structures transversal to Reeb foliations}
 \author[G.~Katz]{Gabriel Katz}
\address{MIT, Department of Mathematics, 77 Massachusetts Ave., Cambridge, MA 02139, U.S.A.}
\email{gabkatz@gmail.com}
\begin{document}

\maketitle 

\begin{abstract} Let $\beta$ be a contact form on a compact smooth manifold $X$ and $v_\beta$ its Reeb vector field. The paper applies general results of different authors about Hodge structures that are transversal to a given foliation to the special case of $1$-dimensional foliation generated by the Reeb flow $v_\beta$. 
 
The de Rham differential complex $\Omega_{\mathsf{basic}}^\ast(X, v_\beta)$ of, so called, {\sf basic} relative to $v_\beta$-flow differential forms is in the focus of this investigation. By definition, the basic forms vanish when being contracted with $v_\beta$, and so do their differentials. 

We prove that under the change $\beta \leadsto \beta_1 = \beta +df$, where a function $f:X \to \mathbf  R$ such that $df(v_\beta) > -1$, the differential complexes $\Omega_{\mathsf {basic}}^\ast(X, v_{\beta_1})$ and  $\Omega_{\mathsf{basic}}^\ast(X, v_\beta)$ are canonically isomorphic.

We investigate when the $2$-form $d\beta$ and its powers deliver nontrivial elements in the basic de Rham cohomology  $H^\ast_{\mathsf{basic}\,d\mathcal{R}}(X,  v_\beta)$ of the differential complex $\Omega_{\mathsf{basic}}^\ast(X, v_\beta)$. Answers to these questions  contrast sharply in the cases of a closed $X$ and a $X$ with boundary. 

On the other hand, building on work of Ra\'{z}ny \cite{Raz}, we show that on a closed manifold $X$, equipped with a transversal to the Reeb flow Hodge structure that satisfies the {\it  Basic Hard Lefschetz Property},  the basic de Rham cohomology $H^\ast_{\mathsf{basic}\,d\mathcal{R}}(X,  v_\beta)$ are topological invariants of $X$. 
\end{abstract}

\section{Introduction}

In this paper, following \cite{He}, we apply the general results from \cite{EKA}, \cite{EKH} about the  Hodge structures, transversal to foliations, to the case of $1$-dimensional foliations $\mathcal F(v_\b)$ generated by the Reeb flows $v_\b$ on compact connected smooth manifolds $X$. In the case of a closed $X$, we rely heavily on the works of Z. He \cite{He}, Y. Lin \cite{Lin}, and P. Ra\'{z}ny \cite{Raz}. \smallskip

{\sf Erratum.} The earlier version  \cite{K6} of this paper had a basic fault in the proof of Theorem 2.1. Most likely, its claim is also false. As a result, Corollary 2.1 was also erroneous. \hfill $\diamondsuit$ \smallskip

In the present version, we propose a different mechanism (in Definition \ref{def.shallow_chain}, we call it ``{\sf shallow incline exact chain equivalence}") that leads to a somewhat similar assertion.\smallskip

First, let us introduce the main structures we are going to study. Let $\b$ be a contact form on a smooth $(2n+1)$-dimensional manifold $X$. This means that $\b\wedge (d\b)^n > 0$ everywhere. The contact form $\b$  produces a unique  vector field $v_\b$, characterized by two properties: {\sf(1)} $\b(v_\b) = 1$ and {\sf(2)} $v_\b \, \rfloor \, d\b = d\b(v_\b, \sim) \equiv 0$.  This field $v_\b$ is called the {\sf Reeb field} of $\b$. 

A smooth differential $k$-form $\a$ on $X$ is called {\sf basic} with respect to a given vector field $v \neq 0$ if $v \, \rfloor \, \a = 0$ and $v \, \rfloor \, d\a = 0$, where $d\a$ denotes the differential of $\a$. Note that the contact form $\b$ is not basic relatively to its Reeb vector field $v_\b$.

The basic forms on $X$ form the {\sf basic de Rham differential complex}  $\Omega_{\mathsf b}^\ast(X, v_\beta) =_{\mathsf{def}} \Omega_{\mathsf{basic}}^\ast(X, v_\beta)$ (see (\ref{eq.de Rham_complex})) whose cohomology $H^\ast_{\mathsf{basic}\,d\mathcal{R}}(X, v_\b)$ are natural invariants of $v_\b$ and thus of $\b$. Deformations of $\b$, even by conformal factors, may lead to different differential complexes and their different basic de Rham cohomology. 
However, Y.G. Oh proved recently that $H^0_{\mathsf{basic}\,d\mathcal{R}}(X, v_\b) \approx \R \approx H^1_{\mathsf{basic}\,d\mathcal{R}}(X, v_\b)$ for a {\it generic choice} of a contact form $\b$ \cite{Oh}.

\smallskip

Let $\b$ be a contact form and $\tau$ a $(2k)$-form on $X$ such that $v_\b \, \rfloor \,\tau = 0 \text{\, and \,} v_\b \, \rfloor \,d\tau = (d \b)^k.$ Thus, by definition, $\tau$ is basic. It follows that the directional derivative $\mathcal L_{v_\b} \tau = (d \b)^k$. Such $\tau$ can be viewed as a basic anti-derivative of the basic form $(d \b)^k$ in the direction of $v_\b$. 

Let $\{\phi^t: X \to X\}_t$ be a partially defined (because of possible boundary effects) family of diffeomorphisms generated by the vector field $v_\b$. 

 We say that the $k$-form $\tau$ is {\sf  bounded along} a $v_\b$-tajectory $\g$ {\sf in the direction of a $(2k)$-frame} $Fr_\star \in \L^{2k}(T_{x_\star} X / T_{x_\star} \g)$, if 
the values $\big\{\big|\tau\big(\phi^t_\ast(Fr_\star)\big)\big|\big\}_{t \in \R_+}$ are bounded.

In Corollary \ref{cor.tau_d(beta)_BOUNDED}, we prove that  the existence of $\tau$ that  is bounded along all $v_\b$-trajectories for all frames $Fr_\star$, implies that $v_\b$ admits a Lyapunov function $f: X \to \R$ and that the boundary $\d X \neq \emptyset$. \smallskip

In Proposition  \ref{prop.ISO_de Rham} and Theorem \ref{th.CHAIN_equivalent}, we prove that if 
$\b$ and $\b_1 = \b +df$ are two contact forms on a  
smooth manifold $X$, such that $df(v_\b) > -1$, then basic de Rham differential complexes   $\Omega_{\mathsf b}^\ast(X, v_{\beta_1})$ and  $\Omega_{\mathsf b}^\ast(X, v_\beta)$ are canonically isomorphic. Moreover, by Theorem \ref{th.ker_Delta_under_deformations}, if $X$ is a closed manifold, then the spaces $\ker(\Delta^k_{\mathsf{b}}(\b_1))$ and $\ker(\Delta^k_{\mathsf{b}}(\b))$ of basically harmonic $k$-forms, generated by the contact forms $\b_1$ and $\b$, are canonically isomorphic for all $k$.\smallskip

Applying Ra\'{z}ny's Theorem 2.20, \cite{Raz} and Corollary 4.4 \cite{Raz} to our particular setting, in Theorem \ref{th.Lefschetz_implies_iso_homology} we get  the following assertion. Let $\b_0, \b_1$ be two contact forms on a closed manifold $X$ 
so that both Reeb foliations, $\mathcal F(v_{\b_0})$ and $\mathcal F(v_{\b_1})$, possess the {\sf Basic Hard Lefschetz property} (see Theorem \ref{th.Basic_hard_Lefschetz} for its description). 
Then, for each $k$, their basic de Rham $k$-cohomologies  are isomorphic $$H^k_{\mathsf{basic}\,d\mathcal{R}}(X,  v_{\b_1}) \approx H^k_{\mathsf{basic}\,d\mathcal{R}}(X,  v_{\b_0}),
$$ 
 and so are the spaces of basic symplectically harmonic $k$-forms:
 $$\ker(\Delta_{d\b_1}^k(v_{\b_1})) \approx \ker(\Delta_{d\b_0}^k(v_{\b_0})). \quad \diamondsuit$$

\section{Contact de Rham cohomology of the Reeb foliations}

 When we consider smooth structures/objects  (like functions, differential forms, vector fields, etc.) on a given smooth compact manifold $X$ {\it with boundary}, this means that $X$ is embedded into a larger open equidimensional manifold $\hat X$ so that a structure in question is a restriction of a similar structure/object on $\hat X$ to $X$. As we change the structure/object on $X$, we may shrink the ambient $\hat X$. Thus one should think of the ambient $\hat X \supset X$ as a germ.\smallskip

 Let $X$ be a compact smooth $m$-dimensional manifold.  Consider the de Rham differential complex of differential forms
\begin{eqnarray} \label{eq.de Rham_complex_no_field}
\quad   \Om^\ast_{d\mathcal{R}}(X) =_{\mathsf{def}} \quad \quad \quad \quad \\
\Big\{ 0 \stackrel{d}{\rightarrow} 
\Om^0(X) \stackrel{d}{\rightarrow} \Om^1(X) \stackrel{d}{\rightarrow}  \Om^2(X) \stackrel{d}{\rightarrow} \ldots \stackrel{d}{\rightarrow}  \Om^{m}(X) \stackrel{d}{\rightarrow} 0\Big\}, \nonumber
\end{eqnarray}
where  $\Om^0(X) \approx C^\infty(X, \R)$ is the algebra of smooth functions on $X$. 
\smallskip

For a smooth non-vanishing vector field $v$ on a 
$m$-dimensional manifold $X$, consider the vector space $\Om^k_{\mathsf b}(X, v)$ of smooth $v$-{\sf invariant} exterior differential $k$-forms $\a$ that are $v${\sf-horizontal}, that is,  $v \rfloor \a = 0$. The differential $d\a$ is evidently $v$-invariant when $\a$ is.  Hence, for a $v$-invariant horizontal $\a$, we get $$0 = \mathcal L_v \a = v\, \rfloor\, d\a + d(v\, \rfloor\, \a) = v \,\rfloor \,d\a,
$$ which implies that $d\a$ is horizontal as well. When $k=0$, then $\Om^0_{\mathsf b}(X, v) \approx C^\infty(X, v)$, the space of $v$-invariant smooth functions on $X$. 

Alternatively, we could define $\Om^k_{\mathsf b}(X, v)$ as the vector space of $k$-forms $\a$ that are $v$-horizontal and whose differential $d\a$ is $v$-horizontal. Some authors call such differential forms ``{\sf basic}".

Therefore, for $m = \dim(X)$ and any vector field $v \neq 0$ on $X$, we get the {\sf horizontal and $v$-invariant} (alternatively, {\sf basic}) de Rham differential complex 
\begin{eqnarray} \label{eq.de Rham_complex}
\quad   \Om^\ast_{\mathsf{basic}\,d\mathcal{R}}(X, v) =_{\mathsf{def}} \quad \quad \quad \quad \\
\Big\{ 0 \stackrel{d}{\rightarrow} 
\Om^0_{\mathsf b}(X, v) \stackrel{d}{\rightarrow} \Om^1_{\mathsf b}(X, v) \stackrel{d}{\rightarrow}  \Om^2_{\mathsf b}(X, v) \stackrel{d}{\rightarrow} \ldots \stackrel{d}{\rightarrow}  \Om^{m-1}_{\mathsf b}(X, v) \stackrel{d}{\rightarrow} 0\Big\} \nonumber
\end{eqnarray}
of $\Om^0_{\mathsf b}(X, v)$-modules. Here the algebra $\Om^0_{\mathsf b}(X, v)$ consists of smooth functions $f: X \to \R$ such that the directional derivative $\mathcal L_v f = 0$.
Note that $\Om^{m}_{\mathsf b}(X, v) = 0$ since no  non-zero $m$-form on an open set can be $v$-horizontal.\smallskip

Evidently, the de Rham differential complex of basic forms $\Om^\ast_{\mathsf{basic}\,d\mathcal{R}}(X, v)$ maps canonically to the de Rham differential complex $\Om^\ast_{d\mathcal{R}}(X)$ of all forms. 

\begin{definition}\label{def.de Rham}
We denote by $H^\ast_{\mathsf{basic}\,d\mathcal{R}}(X, v)$ the cohomology groups of the $v$-basic de Rham differential complex (\ref{eq.de Rham_complex}).
\hfill $\diamondsuit$
\end{definition}

\begin{lemma} \label{lem.diffeo_of_de Rham} 
Let $\Psi: X \to X$ be a smooth diffeomorphism and  $v_1, v_2$ 
smooth non-vanishing vector fields on $X$ such that $(\Psi)_\ast(v_1) = \l \cdot v_0$ for a smooth  positive function $\l: X \to \R$. 

Then the  basic de Rham differential complexes $\Om^\ast_{\mathsf{basic}\,d\mathcal{R}}(X, v_0)$ and $\Om^\ast_{\mathsf{basic}\,d\mathcal{R}}(X, v_1)$ are isomorphic with the help of $\Psi^\ast$. Hence, the de Rham basic cohomology are isomorphic: $$\Psi^\ast: H^\ast_{\mathsf{basic}\,d\mathcal{R}}(X,  v_0) \approx H^\ast_{\mathsf{basic}\,d\mathcal{R}}(X,  v_1).$$
\end{lemma}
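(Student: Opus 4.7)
The plan is to verify directly that $\Psi^\ast$ is a chain isomorphism between the two basic de Rham complexes, relying on two standard naturality facts: (i) the exterior derivative commutes with pullback, $d\,\Psi^\ast = \Psi^\ast d$; and (ii) for a diffeomorphism $\Psi$, any vector field $w$ on the target, and any form $\alpha$ on the target,
$$\Psi^\ast(w \,\rfloor\, \alpha) \;=\; \big((\Psi^{-1})_\ast w\big) \,\rfloor\, \Psi^\ast \alpha.$$
The hypothesis $\Psi_\ast v_1 = \lambda \cdot v_0$ can be rewritten as $(\Psi^{-1})_\ast(\lambda v_0) = v_1$, i.e.\ $v_1 = (\lambda \circ \Psi)\cdot (\Psi^{-1})_\ast v_0$.

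First I would compute, for an arbitrary $k$-form $\alpha$ on $X$,
$$v_1 \,\rfloor\, \Psi^\ast \alpha \;=\; \Psi^\ast(\lambda v_0 \,\rfloor\, \alpha) \;=\; (\lambda \circ \Psi) \cdot \Psi^\ast(v_0 \,\rfloor\, \alpha).$$
Since $\lambda > 0$ pointwise and $\Psi^\ast$ is injective (as $\Psi$ is a diffeomorphism), this identity shows $v_1 \,\rfloor\, \Psi^\ast\alpha = 0$ if and only if $v_0 \,\rfloor\, \alpha = 0$. Applying the identical computation with $\alpha$ replaced by $d\alpha$, and using naturality $d(\Psi^\ast\alpha) = \Psi^\ast(d\alpha)$, gives $v_1 \,\rfloor\, d(\Psi^\ast \alpha) = 0$ if and only if $v_0 \,\rfloor\, d\alpha = 0$. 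Consequently $\Psi^\ast$ carries $\Omega^k_{\mathsf b}(X, v_0)$ into $\Omega^k_{\mathsf b}(X, v_1)$ and, being compatible with $d$, defines a chain map of the two basic complexes.

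For the inverse, I would apply the same argument to $\Psi^{-1}$. The hypothesis yields $(\Psi^{-1})_\ast v_0 = \mu \cdot v_1$ with $\mu = 1/(\lambda \circ \Psi^{-1}) > 0$, so $(\Psi^{-1})^\ast$ is a well-defined chain map $\Omega^\ast_{\mathsf b}(X, v_1) \to \Omega^\ast_{\mathsf b}(X, v_0)$, and $(\Psi^{-1})^\ast \circ \Psi^\ast = \mathrm{id}$, $\Psi^\ast \circ (\Psi^{-1})^\ast = \mathrm{id}$ follow from the diffeomorphism identities for pullback. Passing to cohomology delivers the stated isomorphism. There is no genuine obstacle in this argument: the only point demanding attention is the strict positivity of $\lambda$, which is precisely what makes the two horizontality conditions translate into each other without loss of information.
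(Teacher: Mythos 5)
Your argument is correct and takes essentially the same route as the paper's proof, which likewise rests on the naturality of $d$ under pullback and on the observation that the basic condition depends only on the conformal class of the vector field; you merely spell out the contraction identity $\Psi^\ast(w \,\rfloor\, \alpha) = ((\Psi^{-1})_\ast w)\,\rfloor\,\Psi^\ast\alpha$ that the paper leaves implicit. One harmless slip: from your own identity $v_1 = (\lambda\circ\Psi)\,(\Psi^{-1})_\ast v_0$ the factor in the inverse step is $\mu = 1/(\lambda\circ\Psi)$ rather than $1/(\lambda\circ\Psi^{-1})$, but only the positivity of $\mu$ is used, so the argument stands.
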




\begin{proof} For a given $X$, the notion $\{v \rfloor \a =0,\, v \rfloor d\a =0\}$ of a basic form $\a$ depends evidently only on the conformal class of the vector field $v$.  Therefore, using that  $d\Psi^\ast(\a) = \Psi^\ast(d\a)$ and that $(\Psi)_\ast(v_1) = \l v_0$, we conclude that $\Psi^\ast$ is an isomorphism that maps basic forms to basic forms and commutes with their differentials. Thus, the basic differential complex  $\Om^\ast_{\mathsf{basic}\,d\mathcal{R}}(X, v_0)$ is isomorphic to the basic differential complex  $\Om^\ast_{\mathsf{basic}\,d\mathcal{R}}(X, v_1)$ with the help of $\Psi^\ast$.
\hfill
\end{proof}


\begin{lemma} \label{lem.contact_de Rham_A} 
Let $\b$ be a contact form on a 
smooth manifold $X$ and $v_\b$ its Reeb vector field. Consider a $1$-form $\b_1 = \b + d f$, where the smooth function $f: X \to \R$ is such that  $df(v_\b) > -1$ (i.e., $f$ is not ``negatively very  steep" along the Reeb trajectories). \smallskip

$\bullet$ With $\b$ being fixed, the inequality $df(v_\b) > -1$ defines a nonempty open convex set $\mathcal C(\b)$ in the functional space $C^\infty(X, \R)$.

$\bullet$ The form $\b_1$ is a contact form on $X$. 
%
\end{lemma}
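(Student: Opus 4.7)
The plan is to handle the two bullets separately; the first is soft, and the second reduces to one clean algebraic identity. For the convex set $\mathcal C(\b)$, nonemptiness is witnessed by $f \equiv 0$, and convexity follows from the linearity of $f \mapsto df(v_\b)$: if $df_0(v_\b) > -1$ and $df_1(v_\b) > -1$ pointwise, then the same pointwise bound holds for any convex combination $t f_0 + (1-t) f_1$. For openness I would equip $C^\infty(X,\R)$ with its standard Fr\'echet (or Whitney $C^1$) topology and use that $X$ is compact: the strict inequality $df(v_\b) > -1$ upgrades to a uniform bound $df(v_\b) \geq -1 + \e$ for some $\e > 0$, so any $g$ sufficiently $C^1$-close to $f$ still satisfies $dg(v_\b) > -1$ everywhere.

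For the second bullet, the key observation is that $d(df) = 0$ forces $d\b_1 = d\b$ and hence $(d\b_1)^n = (d\b)^n$, so
$$
\b_1 \wedge (d\b_1)^n \;=\; \b \wedge (d\b)^n \;+\; df \wedge (d\b)^n.
$$
The heart of the argument is the pointwise identity
$$
\a \wedge (d\b)^n \;=\; \a(v_\b)\cdot \b \wedge (d\b)^n,
$$
valid for every $1$-form $\a$. To establish it I would write $\a \wedge (d\b)^n = g\cdot \b \wedge (d\b)^n$ for some function $g$, using that $\b \wedge (d\b)^n$ is a nowhere-vanishing $(2n+1)$-form by the contact hypothesis, then contract both sides with $v_\b$ and invoke $v_\b \rfloor d\b = 0$ together with $\b(v_\b) = 1$ to get $\a(v_\b)\,(d\b)^n = g\,(d\b)^n$. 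Since $(d\b)^n$ restricts to a nowhere-zero top form on the contact hyperplane $\ker \b$ (it is the top power of the induced symplectic form there), this pins down $g = \a(v_\b)$.

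Applying this identity with $\a = df$ converts the expression above into $\big(1 + df(v_\b)\big)\cdot \b \wedge (d\b)^n$, in which both factors are pointwise positive by hypothesis and by the contactness of $\b$; hence $\b_1 \wedge (d\b_1)^n > 0$ everywhere, i.e., $\b_1$ is contact. I expect no serious obstacles: the only step requiring a moment's care is the algebraic identity above, whose proof rests entirely on the non-degeneracy of $(d\b)^n$ on the contact distribution — a direct consequence of the contact condition on $\b$.
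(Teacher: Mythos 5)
Your proof is correct and follows essentially the same route as the paper: both reduce the contact condition to the pointwise identity $df\wedge(d\beta)^n = df(v_\beta)\,\beta\wedge(d\beta)^n$, found by contracting with $v_\beta$ and using $\beta(v_\beta)=1$, $v_\beta\,\rfloor\, d\beta=0$, and both handle nonemptiness and convexity of $\mathcal C(\beta)$ via the linearity of $f\mapsto df(v_\beta)$. The paper additionally records the formula $v_{\beta_1}=(1+df(v_\beta))^{-1}v_\beta$ for later use, which the lemma's statement does not require, while your explicit openness argument (uniform bound via compactness in the $C^1$ topology) fills in a detail the paper only asserts.
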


\begin{proof} Since $\b_1 = \b + d f$, $$\b_1\wedge (d\b_1)^n = \b_1 \wedge (d\b)^n = \b \wedge (d\b)^n + df \wedge (d\b)^n.$$ 

We claim that the $(2n+1)$-form $\b_1\wedge (d\b_1)^n$ is positive on $X$, provided $df(v_\b) > -1$. Indeed, the $(2n+1)$-form $df \wedge (d\b)^n$ is proportional to $(2n+1)$-form $\b \wedge (d\b)^n$ with some functional coefficient $h: X \to \R$. Thus $\b_1\wedge (d\b_1)^n > 0$ is equivalent to the inequality $1+ h > 0$ on $X$. 

To detect $h$, it suffices to compare the contractions of the two forms,  $df \wedge (d\b)^n = h (\b \wedge (d\b)^n)$ and $\b \wedge (d\b)^n$, with a non-vanishing Reeb vector field $v_\b$. Recall that $v_\b \rfloor\b =1$ and $v_\b \rfloor d\b \equiv 0$.
Using these properties of $v_\b$, we get $$v_\b \rfloor (\b \wedge (d\b)^n) = (d\b)^n, \text{ while } v_\b \rfloor (df \wedge (d\b)^n) = df(v_\b) (d\b)^n = h\;(d\b)^n.$$ Thus, when $df(v_\b) > -1$, $v_\b \rfloor(\b_1\wedge (d\b_1)^n)$ is positively proportional to $(d\b)^n$ with the functional coefficient $h$. Therefore, $\b_1$ is a contact form, provided $df(v_\b) > -1$.

\smallskip
Put $v_{\b_1} =_{\mathsf{def}} (1 + df(v_\b))^{-1} v_\b$. Let us check that $v_{\b_1}$ is the Reeb vector field for the form $\b_1$. Indeed, $\b_1(v_{\b_1}) = \b_1( (1 + df(v_\b))^{-1} v_\b) = (\b +df) \big((1 + df(v_\b))^{-1} v_\b\big) =1$ since $\b(v_\b) = 1$, and $d\b_1(v_{\b_1}) = d\b( (1 + df(v_\b))^{-1} v_\b) = (1 + df(v_\b))^{-1} d\b(v_\b) \equiv 0.$ \smallskip

If $df(v_\b) > -1$ and $dg(v_\b) > -1$ for some $f, g \in C^\infty(X, \R)$, then $d(t f + (1-t) g) >  -1$ for any $t \in [0,1]$. Therefore, $\mathcal C(\b) \subset C^\infty(X, \R)$ is an open convex set that contains all constant functions $c$ (since $dc(v_\b) = 0 > -1$).
\hfill
\end{proof}

Although the deformation $\b \leadsto \b_1=\b+df$, where $df(v_\b) > -1$, preserves the conformal class of the Reeb field $v_\b$, it may change the original contact distribution $\xi_\b = \ker(\b)$. The function $df(v_\b)$ on $X$ and its norms may be viewed as  quantities that distinguish between the distributions $\xi_\b$ and $\xi_{\b_1}$.\smallskip

Let $\mathsf{Cont}(X) \subset \Omega^1(X)$ denote the space of contact forms $\b$. Since $\b \wedge (d\b)^n > 0$ on $X$ is an open constraint, $\mathsf{Cont}(X)$ is open in $\Omega^1(X)$. Its boundary consists of $1$-forms $\b$ such that $\b \wedge (d\b)^n \geq 0$ on $X$ and $(\b \wedge (d\b)^n)(x) =0$ for some $x \in X$.

\begin{proposition} \label{prop.ISO_de Rham}
Let $\b$ be a contact form on a smooth manifold $X$ and $v_\b$ its Reeb field. Consider a $1$-form $\b_1 = \b + d f$, where the smooth function $f: X \to \R$ is such that  $df(v_\b) > -1$. 

Then the de Rham basic differential complexes $\Om^\ast_{\mathsf{basic}\,d\mathcal{R}}(X, v_{\b_1})$ and $\Om^\ast_{\mathsf{basic}\,d\mathcal{R}}(X, v_{\b})$ are canonically isomorphic. 
As a result, the de Rham basic cohomology spaces 
are isomorphic as well: $$H^\ast_{\mathsf{basic}\,d\mathcal{R}}(X,  v_{\b_1}) \approx H^\ast_{\mathsf{basic}\,d\mathcal{R}}(X,  v_{\b}).$$

In other words, the canonical isomorphisms $\Om^\ast_{\mathsf{basic}\,d\mathcal{R}}(X, v_{\b_1}) \approx \Om^\ast_{\mathsf{basic}\,d\mathcal{R}}(X, v_{\b})$ are valid for any  form $\b_1$ residing in a non-empty 
convex set $$\mathcal O(\b) = \{ \b + df\big |\; f \in C^\infty(X, \R),\; df(v_\b) > -1\} \subset \mathsf{Cont}(X).$$  
\end{proposition}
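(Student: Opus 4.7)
The plan is to reduce the statement to Lemmas \ref{lem.diffeo_of_de Rham} and \ref{lem.contact_de Rham_A} by observing that the Reeb fields of $\b$ and $\b_1$ are pointwise positively proportional.

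First, I would invoke Lemma \ref{lem.contact_de Rham_A}, which under the hypothesis $df(v_\b) > -1$ already gives that $\b_1$ is a contact form and identifies its Reeb field explicitly as
$$v_{\b_1} = \big(1 + df(v_\b)\big)^{-1}\, v_\b,$$
with positive scalar factor $\l = (1+df(v_\b))^{-1} \in C^\infty(X,\R)$. So $v_\b$ and $v_{\b_1}$ generate the same oriented line field on $X$.

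Next, I would apply Lemma \ref{lem.diffeo_of_de Rham} with $\Psi = \mathrm{id}_X$, $v_0 = v_\b$, $v_1 = v_{\b_1}$, and $\l$ as above. Concretely, the basicness conditions $v \rfloor \a = 0$ and $v \rfloor d\a = 0$ are both $C^\infty(X,\R)$-linear in $v$, so for any $k$-form $\a$ one has $v_{\b_1} \rfloor \a = \l \cdot (v_\b \rfloor \a)$ and $v_{\b_1} \rfloor d\a = \l \cdot (v_\b \rfloor d\a)$; since $\l > 0$ everywhere, $\a$ is $v_\b$-basic if and only if it is $v_{\b_1}$-basic. Hence $\Om^k_{\mathsf b}(X, v_{\b_1}) = \Om^k_{\mathsf b}(X, v_\b)$ as subspaces of $\Om^k(X)$, and the exterior differential is the same on both. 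Therefore the basic de Rham complexes coincide on the nose (the canonical isomorphism being the identity on forms), and consequently so do their cohomologies.

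Finally, the statement about $\mathcal O(\b)$ is just a rephrasing: the convexity and non-emptiness of $\mathcal C(\b) = \{f : df(v_\b) > -1\}$ established in Lemma \ref{lem.contact_de Rham_A} translate into the convexity and non-emptiness of $\mathcal O(\b) = \b + d\mathcal C(\b)$, and the previous paragraph shows the canonical isomorphism of complexes holds uniformly across this set. The only step requiring any care is the verification that basicness is preserved under rescaling by a positive function and that this rescaling is exactly the one relating the two Reeb fields; both are immediate once Lemma \ref{lem.contact_de Rham_A} provides the explicit formula for $v_{\b_1}$, so I do not anticipate a substantial obstacle.
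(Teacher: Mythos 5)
Your proposal is correct and follows essentially the same route as the paper: by Lemma \ref{lem.contact_de Rham_A} the Reeb fields $v_{\b_1}$ and $v_\b$ are positively proportional, and since the basicness conditions depend only on the conformal class of the vector field, the two basic complexes coincide. Your explicit verification via $C^\infty(X,\R)$-linearity of contraction just spells out the step the paper states as ``the complexes depend only on the conformal classes of the Reeb vector fields,'' so there is no substantive difference.
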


\begin{proof} 


By their definitions, the de Rham basic differential complexes $\Om^\ast_{\mathsf{basic}\,d\mathcal{R}}(X, v_{\b_1})$ and $\Om^\ast_{\mathsf{basic}\,d\mathcal{R}}(X, v_{\b})$ depend only on the conformal classes of the Reeb vector fields $v_{\b_1}$ and $v_{\b}$, respectively. By Lemma \ref{lem.contact_de Rham_A}, the two fields are positively proportional.  Therefore, we conclude that the de Rham basic differential complexes
 $\Om^\ast_{\mathsf{basic}\,d\mathcal{R}}(X, v_{\b_1})$ and $\Om^\ast_{\mathsf{basic}\,d\mathcal{R}}(X, v_{\b})$ are canonically isomorphic, as long as $f \in \mathcal C(\b) \subset C^\infty(X, \R)$. 
\hfill
\end{proof}

\begin{definition}\label{def.shallow_chain}  Two contact forms $\b$ and $\b'$ on $X$ are called {\sf  shallow-incline-exact-chain-equivalent} if there exist a sequence of contact forms $\{\b_i\}_{i \in [0, N]}$ such that $\b_0 =\b$, $\b' = \b_N$, and each $\b_{i+1} = \b_i + df_i$, where $f_i \in \mathcal C(\b_i)$, for all $i \in [0, N-1]$. \smallskip

We denote by $\mathcal E(\b)$ the space of all contact forms that are shallow-incline-exact-chain-equivalent to $\b$.
\hfill $\diamondsuit$
\end{definition}

The next lemma shows that this definition of equivalence does not lead to new extensions $\mathcal E(\b)$ of the already familiar set $\mathcal O(\b)=_{\mathsf{def}}\b + d(\mathcal C(\b))$.

\begin{lemma} \label{lem.VACUUS_ext} 
For a given contact form $\b$, the contact forms $\b' \in \mathcal E(\b)$ 
that are shallow-incline-exact-chain-equivalent to $\b$ belong to the open and convex set $\mathcal O(\b)=_{\mathsf{def}}\b + d(\mathcal C(\b))$ in the affine space $\mathcal D(\b)$ of all $1$-forms on $X$ that differ from $\b$ by an exact $1$-form. 
\end{lemma}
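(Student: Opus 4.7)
The plan is to show by induction on the length $N$ of a shallow-incline-exact chain $\b = \b_0, \b_1, \ldots, \b_N = \b'$ that the cumulative primitive $F_i = f_0 + f_1 + \cdots + f_{i-1}$ already satisfies $dF_i(v_\b) > -1$ (hence $F_i \in \mathcal C(\b)$) and that $\b_i = \b + dF_i$. The case $i = N$ is then exactly the statement $\b' \in \mathcal O(\b)$. The content of the lemma is that, although each hypothesis $f_i \in \mathcal C(\b_i)$ is measured against a \emph{different} Reeb field $v_{\b_i}$, together these inequalities telescope into a single inequality measured against the original field $v_\b$.

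The key computation uses the formula from Lemma \ref{lem.contact_de Rham_A}. Writing $v_{\b_i} = \mu_i\, v_\b$ for a positive smooth function $\mu_i$, the lemma gives
\[
v_{\b_{i+1}} = \frac{v_{\b_i}}{1 + df_i(v_{\b_i})} = \frac{\mu_i}{1 + \mu_i\, df_i(v_\b)}\, v_\b,
\]
so $\mu_{i+1} = \mu_i/(1 + \mu_i\, df_i(v_\b))$. Taking reciprocals yields the telescoping identity
\[
\frac{1}{\mu_{i+1}} - \frac{1}{\mu_i} = df_i(v_\b),
\]
and since $v_{\b_0} = v_\b$ gives $\mu_0 = 1$, summation from $i = 0$ to $N-1$ produces $1/\mu_N = 1 + dF_N(v_\b)$.

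It remains to check that $\mu_N$ is a well-defined positive smooth function on $X$. This follows inductively: assuming $\mu_i > 0$, the chain hypothesis $f_i \in \mathcal C(\b_i)$ gives $df_i(v_{\b_i}) = \mu_i\, df_i(v_\b) > -1$, so the denominator in the recursion for $\mu_{i+1}$ is everywhere positive and smooth, and hence so is $\mu_{i+1}$. Positivity of $\mu_N$ then forces $1 + dF_N(v_\b) > 0$ pointwise, which is exactly $F_N \in \mathcal C(\b)$. Therefore $\b' = \b_N = \b + dF_N$ belongs to $\mathcal O(\b)$, as claimed.

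The main obstacle is identifying the right quantity to track along the chain: naively one only has pointwise inequalities $df_i(v_{\b_i}) > -1$ at each step, each referring to a distinct Reeb field, and the set $\mathcal C(\b)$ enjoys no convexity property that would let these inequalities be combined directly. The observation that the reciprocals $1/\mu_i$ behave additively, with increments equal to $df_i(v_\b)$ measured against the \emph{original} field $v_\b$, is what linearizes the iteration and lets the individual constraints sum to the single constraint defining membership in $\mathcal O(\b)$. Once this identification is made, the rest is straightforward bookkeeping.
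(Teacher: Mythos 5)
Your proof is correct and follows essentially the same route as the paper: the reciprocal identity $1/\mu_{i+1} = 1/\mu_i + df_i(v_\b)$ is exactly the paper's formula $v_{\b'} = \bigl(1+\sum_i df_i(v_\b)\bigr)^{-1} v_\b$, obtained there by the same iterated application of Lemma \ref{lem.contact_de Rham_A}, with positivity of the intermediate coefficients forcing $1 + dF_N(v_\b) > 0$. Your inductive bookkeeping with $\mu_i$ is just a slightly more systematic write-up of the paper's direct computation.
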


\begin{proof} 
Consider a shallow-incline-exact chain $\{\b_{i+1} = \b_i + df_i\}_{i \in [0, N-1]}$, where $f_i \in \mathcal C(\b_i)$ and $\b_0 =\b$. 
Then a direct computation shows that 
\begin{eqnarray}\label{eq.SUM}
v_{\b'} = \big(1+ \sum_i df_i(v_\b)\big)^{-1} v_\b,
\end{eqnarray}
which implies that $1+ \sum_i df_i(v_\b) > 0$ since all intermediate proportionality coefficients $(1+ df_i(v_{\b_i}))^{-1}$ next to $v_{\b_i}$ are positive. As an example, let us execute the first steps in this computation. Put $a_i = df_i(v_\b)$. Then, by Lemma  \ref{lem.contact_de Rham_A}, $v_{\b_1} = \frac{1}{1+a_1} v_\b$. Similarly, 
$$v_{\b_2} =  \frac{1}{1+df_2(v_{\b_1})}\; v_{\b_1} = \frac{1}{1+df_2(\frac{1}{1+a_1} v_\b)} \cdot \Big(\frac{1}{1+a_1}\; v_\b\Big) = \Big( \frac{1}{1+ \frac{1}{1+a_1}\,a_2} \Big)\Big(\frac{1}{1+a_1}\; v_\b\Big)$$ 
$$= \frac{1}{1 +a_1+a_2}\, v_\b.$$

Letting $f_\bullet = \sum_i  f_i$ in (\ref{eq.SUM}), we conclude that $f_\bullet(v_\b) > -1$. So, $\b_N \in \mathcal O(\b) = \b + d(\mathcal C(\b))$. Thus, $\mathcal E(\b) \subset \mathcal O(\b)$. On the other hand, $\mathcal E(\b) \supset \mathcal O(\b)$ by definition. As a result, $\mathcal E(\b) = \mathcal O(\b)$, an open and convex set in $\mathcal D(\b)$.
\hfill
\end{proof}

\begin{lemma}\label{lem.O=O'} If, for some $\b, \b' \in \mathsf{Cont}(X)$,  $\mathcal O(\b) \cap  \mathcal O(\b') \neq \emptyset$, then $\mathcal O(\b) =  \mathcal O(\b')$.
\end{lemma}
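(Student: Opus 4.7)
My plan is to show that shallow-incline-exact-chain-equivalence is a genuine equivalence relation on $\mathsf{Cont}(X)$, and then invoke Lemma \ref{lem.VACUUS_ext} to identify equivalence classes with the sets $\mathcal O(\b)$. The conclusion then becomes a formality about equivalence classes.

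First, I would verify that the relation ``$\b \sim \b'$ iff there is a shallow-incline-exact chain from $\b$ to $\b'$'' is reflexive (the constant chain), transitive (concatenate chains), and symmetric. Reflexivity and transitivity are immediate from Definition \ref{def.shallow_chain}. The only nontrivial point is symmetry. For this, it suffices to reverse one elementary step $\b_{i+1} = \b_i + df_i$ with $df_i(v_{\b_i}) > -1$ and show that $\b_i = \b_{i+1} + d(-f_i)$ is itself an admissible step, i.e., $d(-f_i)(v_{\b_{i+1}}) > -1$. Using the formula $v_{\b_{i+1}} = (1+df_i(v_{\b_i}))^{-1} v_{\b_i}$ from Lemma \ref{lem.contact_de Rham_A}, a direct computation gives
\begin{equation*}
d(-f_i)(v_{\b_{i+1}}) = \frac{-df_i(v_{\b_i})}{1 + df_i(v_{\b_i})},
\end{equation*}
and the inequality $\tfrac{-df_i(v_{\b_i})}{1+df_i(v_{\b_i})} > -1$, after clearing the positive denominator $1+df_i(v_{\b_i})$, reduces to $0 > -1$, which holds tautologically. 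This gives symmetry of $\sim$.

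Next, by Lemma \ref{lem.VACUUS_ext}, the equivalence class of $\b$ under $\sim$ is exactly $\mathcal E(\b) = \mathcal O(\b)$. Suppose now that $\mathcal O(\b) \cap \mathcal O(\b') \neq \emptyset$ and pick $\b'' \in \mathcal O(\b) \cap \mathcal O(\b')$. Then $\b \sim \b''$ and $\b' \sim \b''$, and by transitivity and symmetry of $\sim$, we get $\b \sim \b'$. Hence the equivalence classes $\mathcal O(\b) = \mathcal E(\b)$ and $\mathcal O(\b') = \mathcal E(\b')$ coincide, which is the desired conclusion.

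The main (and only) obstacle is verifying symmetry of the elementary deformation step, and that obstacle is resolved by the inequality above, which relies crucially on the positivity of the denominator $1+df_i(v_{\b_i})$ guaranteed by the hypothesis $df_i(v_{\b_i}) > -1$. Everything else is formal equivalence-class bookkeeping combined with the already established identity $\mathcal E(\b) = \mathcal O(\b)$.
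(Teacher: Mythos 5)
Your proof is correct and follows essentially the same route as the paper: pass through a common point of the two sets, reverse an elementary step $\b_{i+1}=\b_i+df_i$ to get chain-equivalence of $\b$ and $\b'$, and conclude via Lemma \ref{lem.VACUUS_ext} that $\mathcal E(\b)=\mathcal O(\b)$ forces $\mathcal O(\b)=\mathcal O(\b')$. The only difference is cosmetic: you verify explicitly the reversibility inequality $-df'(v_{\b_1})=\tfrac{-df'(v_{\b'})}{1+df'(v_{\b'})}>-1$, which the paper merely asserts, and you phrase the bookkeeping as symmetry/transitivity of an equivalence relation.
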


\begin{proof} Pick $\b_1 \in \mathcal O(\b) \cap  \mathcal O(\b')$. Then $\b_1 = \b + df$, where $df(v_\b) > -1$. Also, $\b_1 = \b' + df'$, where $df'(v_{\b'}) > -1$. It follows that $\b' =\b_1- df'$,  where $-df'(v_{\b_1}) > -1$. Since $\b$ and $\b'$ are shallow-incline-exact-chain-equivalent, by Lemma \ref{lem.VACUUS_ext}, $\b' \in \mathcal O(\b)$.  Similarly, $\b \in \mathcal O(\b')$. Therefore, $\mathcal O(\b) =  \mathcal O(\b')$.
\hfill
\end{proof}

Lemma \ref{lem.O=O'}, in combination with Lemma \ref{lem.contact_de Rham_A}, has the following direct implication: the space $\mathsf{Cont}(X)$ ``fibers" over another space so that the fibers are convex open sets $\mathcal O(\b)$ in the vector space  $d(\Om^0(X))$.

\begin{corollary} Assume that $\mathsf{Cont}(X) \neq \emptyset$. Consider the continuous quotient map $Q: \Om^1(X) \to \Om^1(X)/d(\Om^0(X))$ of topological vector spaces and its restriction $Q^\dagger$ to the open subset $\mathsf{Cont}(X) \subset \Om^1(X)$. 
For each element  $\b \in \mathsf{Cont}(X)$, let $\a = Q(\b)$. \smallskip

Then the fiber $(Q^\dagger)^{-1}(\a)$ coincides with $\mathcal O(\b)$, an open and convex set in the affine space $(Q)^{-1}(\a)$. 
\hfill $\diamondsuit$
\end{corollary}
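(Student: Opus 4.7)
The plan is to unpack both sides explicitly and reduce the assertion to the computation already carried out in Lemma \ref{lem.contact_de Rham_A}. Since $\a = Q(\b)$, the full preimage $Q^{-1}(\a) \subset \Om^1(X)$ is the affine subspace $\b + d(\Om^0(X))$. Consequently,
\[
(Q^\dagger)^{-1}(\a) = \mathsf{Cont}(X) \cap Q^{-1}(\a) = \{\b + df : f \in C^\infty(X,\R),\; \b + df \in \mathsf{Cont}(X)\},
\]
and the equality $(Q^\dagger)^{-1}(\a) = \mathcal O(\b)$ reduces to showing that the condition ``$\b + df$ is contact'' is equivalent to $f \in \mathcal C(\b)$, that is, to $df(v_\b) > -1$.

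The inclusion $\mathcal O(\b) \subseteq (Q^\dagger)^{-1}(\a)$ is immediate: Lemma \ref{lem.contact_de Rham_A} supplies precisely the ``if'' direction. The reverse inclusion is the one step that requires genuine argument, because one has to recover the sharp bound $df(v_\b) > -1$ from the mere positivity of $(\b+df)\wedge (d\b)^n$. I would replay the calculation from the proof of Lemma \ref{lem.contact_de Rham_A} in the opposite logical direction: using $d(\b+df) = d\b$, I can write
\[
(\b + df)\wedge (d\b)^n = (1 + h)\,\b\wedge (d\b)^n,
\]
where $h \in C^\infty(X,\R)$ is the unique function with $df\wedge (d\b)^n = h\,\b\wedge(d\b)^n$. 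Contracting with $v_\b$ and invoking $v_\b \rfloor \b = 1$, $v_\b \rfloor d\b = 0$ (hence $v_\b \rfloor (d\b)^n = 0$) identifies $h = df(v_\b)$. The contact condition $(\b+df)\wedge(d\b)^n > 0$ then forces $1 + df(v_\b) > 0$, so $f \in \mathcal C(\b)$ and $\b + df \in \mathcal O(\b)$.

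Openness and convexity of $\mathcal O(\b)$ inside the affine slice $Q^{-1}(\a) = \b + d(\Om^0(X))$ are then automatic: Lemma \ref{lem.contact_de Rham_A} already records that $\mathcal C(\b)$ is open and convex in $C^\infty(X,\R)$, and the affine map $f \mapsto \b + df$ transports these properties to $\mathcal O(\b)$. The only nontrivial ingredient in the corollary is thus the derivation of the sharp bound $df(v_\b) > -1$ from the contact inequality above; everything else is bookkeeping that reassembles Lemmas \ref{lem.contact_de Rham_A}--\ref{lem.O=O'} into a statement about fibers of $Q^\dagger$.
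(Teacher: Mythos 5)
Your proof is correct, and its computational core is exactly the one the paper relies on: the identity $(\b+df)\wedge(d(\b+df))^n=(1+df(v_\b))\,\b\wedge(d\b)^n$, which the proof of Lemma \ref{lem.contact_de Rham_A} already records as an \emph{equivalence} (``$\b_1\wedge(d\b_1)^n>0$ is equivalent to $1+h>0$'' with $h=df(v_\b)$). The paper presents the corollary as a consequence of Lemma \ref{lem.contact_de Rham_A} \emph{combined with} Lemma \ref{lem.O=O'}, whereas you bypass Lemma \ref{lem.O=O'} entirely by identifying the fiber directly: $(Q^\dagger)^{-1}(\a)=\mathsf{Cont}(X)\cap\bigl(\b+d(\Om^0(X))\bigr)$, and the sharp bound $df(v_\b)>-1$ characterizes membership in this intersection. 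This is arguably the cleaner logical organization, since the route through Lemma \ref{lem.O=O'} would still need the reverse implication you isolate (to know that an arbitrary contact form in the affine slice lies in some $\mathcal O(\b')$ meeting $\mathcal O(\b)$), so your version makes explicit the one step the paper leaves implicit. Your remarks on openness and convexity are also fine; note that once the fiber is identified with $\mathsf{Cont}(X)\cap Q^{-1}(\a)$, openness in the affine slice is immediate from openness of $\mathsf{Cont}(X)$ in $\Om^1(X)$, without appealing to any open-mapping property of $f\mapsto df$.
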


\begin{theorem}\label{th.CHAIN_equivalent} If two contact forms, $\b$ and $\b'$, are shallow-incline-exact-chain-equivalent (see Definition \ref{def.shallow_chain}), 
then  the de Rham basic differential complexes $\Om^\ast_{\mathsf{basic}\,d\mathcal{R}}(X, v_{\b'})$ and $\Om^\ast_{\mathsf{basic}\,d\mathcal{R}}(X, v_{\b})$ are canonically isomorphic.
\end{theorem}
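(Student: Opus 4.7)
The plan is to reduce the chain-equivalent case to a single application of Proposition \ref{prop.ISO_de Rham}, using Lemma \ref{lem.VACUUS_ext} as the main bridge. Given the chain $\beta = \beta_0, \beta_1, \ldots, \beta_N = \beta'$ with $\beta_{i+1} = \beta_i + df_i$ and $f_i \in \mathcal C(\beta_i)$, Lemma \ref{lem.VACUUS_ext} tells us precisely that $\beta' \in \mathcal E(\beta) = \mathcal O(\beta)$; equivalently, writing $f_\bullet = \sum_{i=0}^{N-1} f_i$, the identity $\beta' = \beta + df_\bullet$ holds together with the inequality $df_\bullet(v_\beta) > -1$ (this last point is exactly formula (\ref{eq.SUM}) in the proof of Lemma \ref{lem.VACUUS_ext}).

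With this in hand, Proposition \ref{prop.ISO_de Rham}, applied to the pair $(\beta, \beta')$ via the function $f_\bullet \in \mathcal C(\beta)$, immediately delivers a canonical isomorphism
\[
\Om^\ast_{\mathsf{basic}\,d\mathcal{R}}(X, v_{\beta'}) \;\approx\; \Om^\ast_{\mathsf{basic}\,d\mathcal{R}}(X, v_{\beta}),
\]
which is what we want. The canonicity comes from the fact that, as in the proof of Proposition \ref{prop.ISO_de Rham}, the Reeb fields $v_{\beta'}$ and $v_\beta$ are positive scalar multiples of one another, so the basicity conditions $v \,\rfloor\, \alpha = 0$ and $v\,\rfloor\, d\alpha = 0$ coincide pointwise; the ``isomorphism'' is simply the identity map between the underlying spaces of forms on $X$, and it obviously commutes with $d$.

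As an alternative (and essentially equivalent) approach one could avoid invoking Lemma \ref{lem.VACUUS_ext} and instead apply Proposition \ref{prop.ISO_de Rham} $N$ times, once to each elementary step $\beta_i \leadsto \beta_{i+1}$, obtaining a sequence of canonical identifications
\[
\Om^\ast_{\mathsf{basic}\,d\mathcal{R}}(X, v_{\beta_0}) \;\approx\; \Om^\ast_{\mathsf{basic}\,d\mathcal{R}}(X, v_{\beta_1}) \;\approx\; \cdots \;\approx\; \Om^\ast_{\mathsf{basic}\,d\mathcal{R}}(X, v_{\beta_N}),
\]
and then compose them. Because each elementary identification is literally the identity on forms (justified by the conformal-class argument above), the composition is independent of the chosen chain, so ``canonical'' is consistent with both approaches.

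The main conceptual obstacle has already been handled upstream, in Lemma \ref{lem.VACUUS_ext}: one must be sure that iterating the condition $df_i(v_{\beta_i}) > -1$ at each step still yields a summed function $f_\bullet$ that satisfies $df_\bullet(v_\beta) > -1$ when measured against the \emph{original} Reeb field. That verification is exactly the telescoping computation leading to (\ref{eq.SUM}), and once it is in place the theorem becomes a one-line corollary of Proposition \ref{prop.ISO_de Rham}.
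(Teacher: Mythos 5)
Your proposal is correct and follows essentially the same route as the paper: the paper applies Proposition \ref{prop.ISO_de Rham} to each link of the chain and concludes by induction together with Lemma \ref{lem.VACUUS_ext}, which is precisely your alternative argument, while your primary version merely collapses the chain first via Lemma \ref{lem.VACUUS_ext} (so that $\b' = \b + df_\bullet$ with $df_\bullet(v_\b) > -1$) and then invokes Proposition \ref{prop.ISO_de Rham} once. Both variants rest on the same two ingredients and are fine; the observation that each identification is the identity on forms, because the Reeb fields are positively proportional, matches the conformal-class argument in the paper's proof of Proposition \ref{prop.ISO_de Rham}.
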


\begin{proof} If $\b$ and $\b'$ are shallow incline exact chain equivalent, by Proposition \ref{prop.ISO_de Rham}, we get canonical isomorphisms $\Om^\ast_{\mathsf{basic}\,d\mathcal{R}}(X, v_{\b_{i+1}}) \approx \Om^\ast_{\mathsf{basic}\,d\mathcal{R}}(X, v_{\b_i})$
for all $i \in [0, N-1]$. By an induction in $i$ and Lemma \ref{lem.VACUUS_ext}, the claim follows. \smallskip
\hfill
\end{proof}


\smallskip

By Lemma \ref{lem.diffeo_of_de Rham}, the orientation preserving group $\mathsf{Diff}_+(X)$ of smooth diffeomorphisms on $X$  acts on the subspace $\mathsf{Cont}(X) \subset \Om^1(X)$. The $\mathsf{Diff}_+(X)$-action preserves also the subspace $d(\Om^0(X)) \subset \Om^1(X)$. Moreover, the $\mathsf{Diff}_+(X)$-action on $\Om^1(X)$ maps each fiber of the map $Q: \Om^1(X) \to \Om^1(X)/d(\Om^0(X))$ to a $Q$-fiber. 
 Therefore, Theorem \ref{th.CHAIN_equivalent} and Lemma \ref{lem.diffeo_of_de Rham} imply the following claim.
\begin{corollary}\label{cor.from_contact_to} 
The group $\mathsf{Diff}_+(X)$ of the orientation preserving smooth diffeomorphisms acts on the space $\mathsf{Cont}(X)$ and on its quotient $Q^\dagger(\mathsf{Cont}(X))$ so that the map $Q^\dagger$ is equivariant. 
 
Let $\mathbf \Om_{\mathsf{basic}\, d\mathcal R}(X; \mathsf{Reeb})$ be the set of isomorphism classes of basic de Rham complexes, produced by various Reeb vector fields on $X$. 

There exists a surjective map 
$$\Upsilon: \mathsf{Cont}(X)/\mathsf{Diff}_+(X) \to Q^\dagger(\mathsf{Cont}(X))/\mathsf{Diff}_+(X) \to \mathbf \Om_{\mathsf{basic}\, d\mathcal R}(X;  \mathsf{Reeb}).$$
\hfill $\diamondsuit$ 
\end{corollary}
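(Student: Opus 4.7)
The plan is to verify, in sequence, three separate claims: well-definedness of the two $\mathsf{Diff}_+(X)$-actions and equivariance of $Q^\dagger$; well-definedness of the composite $\Upsilon$ on the double quotient; and surjectivity of $\Upsilon$.

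For the actions, I would observe that the pullback $\Psi^\ast$ of any $\Psi \in \mathsf{Diff}_+(X)$ acts linearly on $\Om^1(X)$, preserves $\mathsf{Cont}(X)$ (because $\Psi^\ast(\b \wedge (d\b)^n) = (\Psi^\ast\b) \wedge (d(\Psi^\ast\b))^n$ and $\Psi$ is orientation-preserving, so the positivity is preserved), and preserves $d(\Om^0(X))$ since $\Psi^\ast \circ d = d \circ \Psi^\ast$ on functions. Consequently the action descends to the quotient $\Om^1(X)/d(\Om^0(X))$, and the identity $Q \circ \Psi^\ast = \Psi^\ast \circ Q$ restricts to give equivariance of $Q^\dagger$. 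This step is essentially formal.

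For well-definedness of $\Upsilon$, I would first define a lift $\tilde\Upsilon: \mathsf{Cont}(X) \to \mathbf \Om_{\mathsf{basic}\, d\mathcal R}(X; \mathsf{Reeb})$ by $\b \mapsto [\Om^\ast_{\mathsf{basic}\, d\mathcal R}(X, v_\b)]$, and show it factors through the double quotient in two independent ways. Lemma \ref{lem.diffeo_of_de Rham} says $\tilde\Upsilon$ is constant on $\mathsf{Diff}_+(X)$-orbits, while Theorem \ref{th.CHAIN_equivalent} combined with Lemma \ref{lem.VACUUS_ext} (identifying the $Q^\dagger$-fiber through $\b$ with $\mathcal O(\b)$) says $\tilde\Upsilon$ is constant on $Q^\dagger$-fibers. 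These two descents are mutually compatible provided that the $\mathsf{Diff}_+(X)$-action permutes the $Q^\dagger$-fibers as a whole, which is immediate from $\Psi^\ast(\b + df) = \Psi^\ast\b + d(f \circ \Psi)$. Combining yields a factorization $\mathsf{Cont}(X) \to \mathsf{Cont}(X)/\mathsf{Diff}_+(X) \to Q^\dagger(\mathsf{Cont}(X))/\mathsf{Diff}_+(X) \to \mathbf \Om_{\mathsf{basic}\, d\mathcal R}(X; \mathsf{Reeb})$, whose last two arrows compose to the desired $\Upsilon$.

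Surjectivity is built into the definition: every class in the target is represented by $\Om^\ast_{\mathsf{basic}\, d\mathcal R}(X, v)$ for some Reeb field $v$, and by definition such a $v$ is $v_\b$ for some $\b \in \mathsf{Cont}(X)$, so $\Upsilon([\b])$ hits the prescribed class. The only genuine step of the proof is the compatibility verification in the second paragraph; I expect this to be the main (but modest) obstacle, as it requires combining the two quotients without ambiguity. Everything else is either a direct consequence of previously established results (Lemmas \ref{lem.diffeo_of_de Rham}, \ref{lem.VACUUS_ext}, \ref{lem.O=O'} and Theorem \ref{th.CHAIN_equivalent}) or a straightforward pullback computation.
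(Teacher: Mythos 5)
Your proposal is correct and follows essentially the same route as the paper, which likewise deduces the corollary from the equivariance of $Q^\dagger$ (pullback preserves $\mathsf{Cont}(X)$ and $d(\Om^0(X))$, hence permutes $Q$-fibers) together with Lemma \ref{lem.diffeo_of_de Rham} and Theorem \ref{th.CHAIN_equivalent}; your write-up merely makes the double descent of $\b \mapsto [\Om^\ast_{\mathsf{basic}\, d\mathcal R}(X, v_\b)]$ explicit. The only nitpick is that the identification of the $Q^\dagger$-fiber through $\b$ with $\mathcal O(\b)$ is the content of the unnumbered corollary following Lemma \ref{lem.O=O'} (resting on Lemma \ref{lem.contact_de Rham_A}) rather than of Lemma \ref{lem.VACUUS_ext}, but this does not affect the argument.
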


\section{Contact de Rham cohomology, Lyapunov functions/forms of the Reeb field $v_\b$, and  the class $[d\b] \in H^2_{\mathsf{basic}\,d\mathcal{R}}(X, v_\b)$}

In this section, we compare the contact de Rham cohomology, in particular, the behavior of the classes $([d\b])^\ell \in H^{2\ell}_{\mathsf{basic}\,d\mathcal{R}}(X, v_\b)$ on {\it closed} and {\it compact manifolds with boundary}. The boundary effects generate a sharp dichotomy between these two settings.\smallskip

Recall that  when considering smooth structures/objects on a given smooth compact manifold $X$ {\it with boundary}, we embed  $X$ into a larger open equidimensional manifold $\hat X$ so that a structure in question is a restriction of a similar structure/object on $\hat X$. We think of the ambient $\hat X \supset X$ as a germ.

\begin{lemma}\label{lem.NO_Lyapunov_for_closed} Let $v$ be a non-vanishing vector field on a smooth manifold $X$, possibly with boundary. Let $\g$ be a positive/negative time $v$-trajectory originating at a point in the interior of $X$ such that its closure $\bar\g \subset X$ is a compact set, contained in the interior of $X$. 

We call such trajectory ``{\sf trapped}".\smallskip

Then $v$ does not admit any Lyapunov function $f: X \to \R$ such that $df(v) > 0$ in $X$.
\end{lemma}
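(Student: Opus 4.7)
The plan is to proceed by contradiction: assume a Lyapunov function $f:X\to\R$ with $df(v)>0$ everywhere exists, and derive an incompatibility with the existence of a trapped trajectory $\g$.

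First I would fix parameters. Say $\g$ is a positive-time trajectory $\g:[0,\infty)\to X$ originating at an interior point $x_0$ (the negative-time case is verbatim symmetric after reversing time). Because $\bar\g\subset\mathrm{int}(X)$ is compact and $v$ is non-vanishing, the standard flow-extension argument guarantees that $\g(t)$ is in fact defined for all $t\in[0,\infty)$: the maximal integral curve cannot escape a compact subset of the interior in finite time. Then compose to form $F(t):=f(\g(t))$, whose derivative is $F'(t)=df(v)\big|_{\g(t)}$.

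Next, the key quantitative step: since $df(v)$ is a continuous strictly positive function on $X$ and $\bar\g$ is compact, it attains a positive minimum
\[
c \;:=\; \min_{x\in\bar\g}\, df(v)(x) \;>\; 0.
\]
Consequently $F'(t)\ge c$ for all $t\ge 0$, so $F(t)\ge F(0)+ct$, and thus $F(t)\to+\infty$ as $t\to\infty$. On the other hand, $f$ is continuous on the compact set $\bar\g$, hence bounded there, so $\{F(t)\}_{t\ge 0}=\{f(\g(t))\}_{t\ge 0}$ must be a bounded subset of $\R$. This contradiction completes the argument.

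I do not anticipate a genuine obstacle here; the only subtlety worth stating carefully is the extension of the trajectory to all positive times, which relies precisely on the hypothesis that $\bar\g$ sits in the interior of $X$ (so the orbit cannot run into $\d X$) and is compact (so it cannot escape to infinity in finite time). Without this, one could imagine the trajectory terminating at the boundary before the Lyapunov function has a chance to blow up. Once that point is made, the uniform lower bound $c>0$ on $df(v)\big|_{\bar\g}$ together with boundedness of $f\big|_{\bar\g}$ closes the loop.
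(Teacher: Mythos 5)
Your argument is correct. Both you and the paper argue by contradiction from the compactness of $\bar\g$, but the mechanisms differ slightly: the paper takes a point $x_\star\in\bar\g$ where $f$ attains its maximum on the compact set $\bar\g$, notes that $\bar\g$ is forward $v$-invariant so the forward trajectory of $x_\star$ stays in $\bar\g$, and gets a contradiction from $df(v)(x_\star)>0$ (the maximizer cannot be a maximizer). You instead integrate along $\g$ itself: the uniform bound $c=\min_{\bar\g}df(v)>0$ gives $f(\g(t))\ge f(\g(0))+ct$, which is incompatible with the boundedness of $f$ on the compact set $\bar\g$. Your route buys two small things: it never needs the (standard but unproved-in-the-paper) fact that the closure of a forward orbit is forward invariant, since you only evaluate $f$ along $\g\subset\bar\g$; and it makes explicit the forward-completeness of $\g$, i.e.\ that the integral curve cannot terminate in finite time because it is confined to a compact subset of $\mathrm{int}(X)$ --- a point the paper's proof also tacitly uses (to flow forward from $x_\star$) but does not address. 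The paper's version is marginally shorter and needs no uniform lower bound, only positivity of $df(v)$ at the single point $x_\star$. Either way the conclusion and the role of the hypotheses (compactness of $\bar\g$, avoidance of $\d X$, strict positivity of $df(v)$) are the same.
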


\begin{proof}  Assume, to the contrary, that $v$ admits a Lyapunov function $f: X \to \R$. Since $\bar\g$ is compact, $f$ must attend its maximum at some point $x_\star \in \bar\g$. Since $\bar\g$ is a positive time $v$-invariant set, it contains the positive time $v$-trajectory $\g_\star$ through $x_\star$. However, since $df(v)(x_\star) > 0$, the point $x_\star$ cannot deliver the maximum of $f$ on $\bar\g$.
\hfill
\end{proof}

\begin{proposition}\label{prop.[dBETA]not_zero} Let $\b$ be a contact form on a  smooth compact connected manifold $X$, and let $v_\b$ be its Reeb vector field. \smallskip

$\bullet$ The homology class $[d\b] \in H^2_{\mathsf{basic}\,d\mathcal{R}}(X, v_\b)$ is zero if and only if $v_\b$ admits a closed $1$-form $\a$ such that $\a(v_\b) = 1$. 

If $[d\b] = 0$ in $H^2_{\mathsf{basic}\,d\mathcal{R}}(X, v_\b)$, then the $v_\b$-flow is transversal to a codimension one foliation $\mathcal F_\a$ (well-defined in the vicinity of $X$ in $\hat X$).
\smallskip

$\bullet$ If $v_\b$ admits a Lyapunov function, then $\d X \neq \emptyset$ and the homology class $[d\b] \in H^2_{\mathsf{basic}\,d\mathcal{R}}(X, v_\b)$ is zero. \smallskip

Conversely, assuming that $H^1(X; \R) =0$, if  $[d\b] = 0$ in $H^2_{\mathsf{basic}\,d\mathcal{R}}(X, v_\b)$, then $v_\b$ admits a Lyapunov function, and the boundary $\d X \neq \emptyset$. 
\end{proposition}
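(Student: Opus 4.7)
I would split the argument along the two bullets, using the first bullet's equivalence as the main tool for both directions of the second.

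\textbf{First bullet.} Both directions are algebraic. Given a closed $\a$ with $\a(v_\b) = 1$, set $\mu := \b - \a$: then $\mu(v_\b) = 0$ and $d\mu = d\b$, and $d\b$ is automatically $v_\b$-horizontal by the defining property of the Reeb field, so $\mu$ is basic and certifies $[d\b] = 0$. Conversely, if $d\b = d\mu$ for some basic $\mu$, then $\a := \b - \mu$ is closed with $\a(v_\b) = 1$. Since $\a$ is nowhere zero and $\a \wedge d\a = 0$, Frobenius delivers the codimension-one foliation $\mathcal F_\a = \ker(\a)$, and transversality of $v_\b$ to $\mathcal F_\a$ is immediate from $\a(v_\b) \equiv 1$.

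\textbf{Second bullet, forward direction.} Suppose $v_\b$ admits a Lyapunov function $f$. If $X$ had no boundary, every trajectory would be trapped in the compact interior, contradicting Lemma \ref{lem.NO_Lyapunov_for_closed}; hence $\partial X \neq \emptyset$. For the cohomological conclusion, by the first bullet it suffices to produce a closed 1-form $\a$ with $\a(v_\b) = 1$, and I would take $\a := dg$ for a smooth $g$ solving $v_\b(g) = 1$. To build $g$, extend $v_\b$ and $f$ smoothly to an open $\hat X \supset X$ on which $df(v_\b) > 0$ is preserved, put $c_0 := \min_X f$, and, choosing $\hat X$ wide enough in the $v_\b$-past of $X$, fix $\epsilon > 0$ so that the level set $\Sigma := \hat f^{-1}(c_0 - \epsilon)$ lies in $\hat X \smallsetminus X$. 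Since $df(v_\b)$ is bounded below on $X$, every backward trajectory from $X$ hits $\Sigma$ in uniformly bounded time; let $g(x)$ denote this positive transit time. Flow regularity and the transversality $v_\b \pitchfork \Sigma$ (guaranteed by $df(v_\b) > 0$) make $g$ smooth on $X$, and the flow identity $g(\phi^s(x)) = g(x) + s$ yields $v_\b(g) \equiv 1$, as required.

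\textbf{Second bullet, converse.} Assume $H^1(X; \R) = 0$ and $[d\b] = 0$. The first bullet supplies a closed $\a$ with $\a(v_\b) = 1$; cohomological vanishing forces $\a = dg$ for some smooth $g: X \to \R$, and $dg(v_\b) = 1 > 0$ exhibits $g$ as a Lyapunov function. The conclusion $\partial X \neq \emptyset$ then follows from the forward direction already proved.

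\textbf{Main obstacle.} The only substantive difficulty is the construction of a globally smooth $g$ with $v_\b(g) = 1$ in the forward direction of the second bullet. One has to choose the germ $\hat X$ large enough in the $v_\b$-past of $X$ for every backward trajectory from $X$ to meet the reference hypersurface $\Sigma$ without leaving $\hat X$, and then verify smoothness of the transit time by flow regularity and $v_\b \pitchfork \Sigma$. Both are routine once $\Sigma$ has been identified; the one nontrivial idea is to take $\Sigma$ as a level set of the Lyapunov function placed slightly below its $X$-minimum in the enlarged germ, which turns the PDE $v_\b(g) = 1$ into a Cauchy problem with smooth data on a transversal hypersurface.
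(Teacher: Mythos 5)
Your first bullet and the converse half of the second bullet are correct and essentially identical to the paper's own argument: $\mu=\beta-\alpha$ respectively $\alpha=\beta-\alpha'$ for a basic primitive $\alpha'$ of $d\beta$, then Frobenius/transversality, and in the converse $H^1(X;\R)=0$ turns the closed form $\alpha$ with $\alpha(v_\beta)=1$ into $dg$ with $dg(v_\beta)=1$; the nonemptiness of $\partial X$ via Lemma \ref{lem.NO_Lyapunov_for_closed} (or via the extremum of a Lyapunov function on a closed manifold, as the paper argues) is also fine. The divergence is in the forward half of the second bullet. There the paper does not build a function with derivative $1$ along the flow by hand: it quotes \cite{K5} for the statement that if $v_\beta$ admits a Lyapunov function at all, it admits one $h$ with $dh(v_\beta)\equiv 1$, and then takes $\beta-dh$ as the basic primitive of $d\beta$. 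You replace that citation by a transit-time construction, and this is where there is a genuine gap.

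The step ``choosing $\hat X$ wide enough in the $v_\beta$-past of $X$'' is not available. In the paper's convention $\hat X$ is a germ of an ambient neighborhood which one may only shrink; and even if you build your own extension, you must extend $f$ and $v_\beta$ so that $d\hat f(\hat v)>0$ holds on the \emph{entire} region swept by the backward flow of $X$ until $\hat f$ drops to $c_0-\epsilon$. That region need not be a thin collar: a backward trajectory can leave $X$ at a boundary point where $f$ is close to $\max_X f$, so it must traverse an $f$-interval of length up to $\max_X f-c_0+\epsilon$ entirely outside $X$, where you control neither the field, nor the sign of $d\hat f(\hat v)$, nor whether the trajectory stays in the domain of the extension before reaching $\Sigma=\hat f^{-1}(c_0-\epsilon)$ (note that no extension with $d\hat f(\hat v)>0$ can exist on any closed ambient manifold, by Lemma \ref{lem.NO_Lyapunov_for_closed}, so the needed domain cannot be produced by a soft compactness argument). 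Hence the assertion ``every backward trajectory from $X$ hits $\Sigma$ in uniformly bounded time'' is unjustified, and with it the smooth solvability of $v_\beta(g)=1$ by your Cauchy-problem device. The honest content of this step --- a traversing flow on a compact manifold with boundary admits a smooth function whose derivative along the flow is identically $1$ --- is precisely the nontrivial result the paper outsources to \cite{K5} (compare also Lemma 5.6 of \cite{K3}); either cite it, as the paper does, or give a real proof (e.g., trajectory-wise time functions patched by a partition of unity after pushing the exit set slightly beyond $\partial X$), which is more than the ``routine'' verification you describe.
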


\begin{proof} If $X$ is compact and a closed Lyapunov form $\a$ is such that $\a(v_\b) = 1$, then the $2$-form 
 $d\b = d(\b - \a)$ is a boundary of a $v_\b$-\emph{horizontal} $1$-form $\b - \a$. Indeed, $(\b - \a)(v_\b) = 1-1 =0$. The boundary $d\b$ is also horizontal by the basic properties of Reeb vector fields. So, $\b - \a$ is a basic form. As a result, $[d\b] = 0$ in $H^2_{\mathsf{basic}\,d\mathcal{R}}(X, v)$. \smallskip

Conversely,  if $[d\b] = 0$ in $H^2_{\mathsf{basic}\,d\mathcal{R}}(X, v)$, then for a basic $1$-form $\a'$, we get $d\b = d\a'$. Thus, $(\b -\a')(v_\b) = \b(v_\b) =1$. Therefore, the $1$-form $\a = \b-\a'$ is closed and  $\a(v_\b) =1$. 

Let $\mathcal F_\a$ be codimension one foliation, defined in the vicinity of $X$ in $\hat X$ by the integrable distribution $\{u \in TX|\; \a(u) = 0\}$, where $\a$ is as above. 
Hence, if $[d\b] = 0$ in $H^2_{\mathsf{basic}\,d\mathcal{R}}(X, v_\b)$, then the $v_\b$-flow  is transversal to $\mathcal F_\a$, since $\a(v_\b) =1$. \smallskip

This proves the claims in the first bullet.
\smallskip

If $X$ is compact and $df(v_\b) > 0$ for a smooth Lyapunov function $f$, then by \cite{K5}, for another Lyapunov function $h$, we have $dh(v_\b) = 1$.  Therefore, $d\b = d(\b - dh)$ is a boundary of a $v_\b$-\emph{horizontal} $1$-form $\a = \b - dh$. Its differential $d\b$ is also horizontal by the basic properties of Reeb vector fields; so, $\b - dh$ is a basic form. As a result, $[d\b] = 0$ in $H^2_{\mathsf{basic}\,d\mathcal{R}}(X, v)$.\smallskip

Assume now that $H^1(X; \R) =0$. 
Since $d(d\b)= 0$ and $v_\b\, \rfloor \, d\b =0$, the form $d\b$ is a cocycle in $\Om^2_{\mathsf{basic}\,d\mathcal{R}}(X, v)$. If $[d\b]=0$ in $H^2_{\mathsf{basic}\,d\mathcal{R}}(X, v)$, then there is a \emph{basic} $1$-form $\a$ such that $d\a = d\b$. Using that $H^1(X; \R) =0$, we get $\b -\a = df$ for a smooth function $f: X \to \R$. Since $\a$ is a $v_\b$-basic form, then $df(v_\b) = \b(v_\b) - \a(v_\b) = 1-0 = 1$. Thus, $df$ is a Lyapunov function for $v_\b$. 

If $X$ is a closed manifold and $v_\b \neq 0$ admits a Lyapunov function $f$, then there exists a point $x_\star \in X$ where $f$ attends its extremum. Thus, $df(v_\b(x_\star)) = 0$, which contradicts to $f$ being a Lyapunov function. 
Therefore, $\d_1 X \neq \emptyset$. \hfill
\end{proof}

\begin{corollary}\label{cor.d(beta)is_homology_nontrivial} Let $\b$ be a smooth contact form on a \emph{closed} manifold $X$. Assume that $H^1(X; \R) = 0$.
Then the class $[d\b] \in H^2_{\mathsf{basic}\,d\mathcal{R}}(X, v)$ is nontrivial, while the class $[d\b] \in H^2_{d\mathcal{R}}(X; \R)$ is trivial.
\end{corollary}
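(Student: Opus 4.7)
The claim is essentially a direct packaging of Proposition \ref{prop.[dBETA]not_zero}, so my plan is to deduce both bullets from that proposition rather than do any new computation.

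First, the second assertion is the easy half: in the ambient de Rham complex $\Om^\ast_{d\mathcal{R}}(X)$ the form $\b$ itself (not merely something basic) is a legitimate primitive of $d\b$. So $[d\b]=0$ in $H^2_{d\mathcal{R}}(X;\R)$ by inspection, and no hypothesis on $X$ is needed for this part.

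For the first (nontriviality) assertion I would argue by contradiction. Assume $[d\b]=0$ in $H^2_{\mathsf{basic}\,d\mathcal{R}}(X,v_\b)$. Since we are given $H^1(X;\R)=0$, the converse part of the second bullet of Proposition \ref{prop.[dBETA]not_zero} applies and produces a smooth Lyapunov function $f:X\to\R$ for $v_\b$ with $df(v_\b)=1>0$ on $X$. The same proposition then forces $\d X\neq \emptyset$, contradicting the hypothesis that $X$ is closed. Hence $[d\b]\neq 0$ in basic de Rham cohomology.

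The only subtlety worth flagging is where the hypothesis $H^1(X;\R)=0$ is actually used: it is precisely the step that upgrades ``$\b-\a'$ is a closed $1$-form'' (obtained from a basic primitive $\a'$ of $d\b$) to ``$\b-\a'=df$ is exact,'' without which one could not produce the Lyapunov function. The rest is essentially bookkeeping: the compactness plus closedness of $X$ together with a non-vanishing $v_\b$ preclude any Lyapunov function because $f$ would attain an interior extremum where $df(v_\b)=0$. I do not expect any genuine obstacle here; the statement is really a dichotomy between the closed case (no Lyapunov function exists, so basic cohomology must detect $d\b$) and the bounded case treated elsewhere.
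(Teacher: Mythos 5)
Your proposal is correct and follows essentially the same route as the paper: it deduces the nontriviality of $[d\b]$ in $H^2_{\mathsf{basic}\,d\mathcal{R}}(X, v_\b)$ from the converse part of Proposition \ref{prop.[dBETA]not_zero} (where $H^1(X;\R)=0$ is used exactly as you flag, to upgrade the closed form $\b-\a'$ to an exact one), and then rules out the resulting Lyapunov function on a closed $X$ via the interior-extremum argument, which is precisely the content the paper invokes through Lemma \ref{lem.NO_Lyapunov_for_closed} and the last paragraph of the proposition's proof. The observation that $[d\b]=0$ in ordinary de Rham cohomology because $\b$ is a global primitive matches the paper's remark that this class is always trivial.
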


\begin{proof} 
Since $X$ is closed, the closure of any $v_\b$-trajectory is compact in $X = \mathsf{int}(X)$. By Lemma \ref{lem.NO_Lyapunov_for_closed}, $v_\b$ does not admit a Lyapunov function.

By Proposition \ref{prop.[dBETA]not_zero} (see the last paragraph in the lemma proof), 
the class $[d\b] \in H^2_{\mathsf{basic}\,d\mathcal{R}}(X, v)$ is nontrivial, provided  $H^1(X; \R) = 0$. At the same time, the class $[d\b] \in H^2_{d\mathcal{R}}(X; \R)$ is always trivial.
\hfill
\end{proof}

Recall that the vector  field $v_\b$, tangent to the fibers of the Hoph fibration $S^3 \to S^2$, is not transversal to {\it any} $2$-dimensional foliation on $S^3$.  Thus,  $[d\b] \neq 0$ in $H^2_{\mathsf{basic}\,d\mathcal{R}}(S^3, v_\b)$. 

\begin{definition}\label{def.LINKING_prop} Let $X$ be an oriented closed $3$-fold, and $\phi$ a nonsingular vector flow on $X$. Following \cite{Good}, we say that $\phi$ has the {\sf linking property}, if a periodic $\phi$-orbit $\g$  bounds an embedded disk $D^2 \subset X$, then the interior of $D^2$ intersects another periodic $\phi$-orbit.
\hfill $\diamondsuit$
\end{definition}

Note that if a flow $\phi$ has a closed trajectory, bounded by an embedded $2$-disk, and $\phi$ possesses the linking property, then $\phi$ must have at least two closed trajectories.

\begin{corollary} Let $X$ be an oriented closed $3$-fold equipped with a contact form $\b$. 

If $[d\b] = 0$ in $H^2_{\mathsf{basic}\,d\mathcal{R}}(X, v_\b)$, then the $v_\b$-flow has the linking property. 
\end{corollary}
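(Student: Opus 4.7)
The plan is to invoke the first bullet of Proposition \ref{prop.[dBETA]not_zero} to translate the cohomological hypothesis into the existence of a closed transverse $1$-form, and then to use Stokes' theorem to show that, in this situation, \emph{no} periodic Reeb orbit can bound an embedded disk. From this, the linking property follows vacuously.

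More precisely, assume $[d\beta] = 0$ in $H^2_{\mathsf{basic}\,d\mathcal R}(X, v_\beta)$. By the first bullet of Proposition \ref{prop.[dBETA]not_zero}, there exists a closed $1$-form $\alpha$ on $X$ with $\alpha(v_\beta) \equiv 1$. Suppose, toward contradiction, that some periodic $v_\beta$-orbit $\gamma$, parametrized by arclength of the flow over a period $T > 0$, bounds an embedded disk $D^2 \subset X$. Orienting $\gamma$ in the direction of the flow, the computation
\[
\int_\gamma \alpha \;=\; \int_0^T \alpha(v_\beta)\, dt \;=\; \int_0^T 1\, dt \;=\; T \;>\; 0
\]
combines with Stokes' theorem and $d\alpha = 0$ to give
\[
T \;=\; \int_\gamma \alpha \;=\; \int_{D^2} d\alpha \;=\; 0,
\]
a contradiction. (The orientation of $\gamma$ as $\partial D^2$ either matches the flow, giving $T = 0$, or is reversed, giving $-T = 0$; either is impossible.)

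Hence, under the hypothesis $[d\beta] = 0$, no periodic Reeb orbit bounds an embedded disk in $X$. In particular, the universal ``if a periodic orbit $\gamma$ bounds an embedded disk $D^2$, then $\mathrm{int}(D^2)$ meets another periodic orbit'' is satisfied vacuously, which is precisely the linking property of Definition \ref{def.LINKING_prop}.

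There is no real obstacle here beyond checking that the orientation conventions do not affect the sign contradiction and that Proposition \ref{prop.[dBETA]not_zero} really delivers a \emph{globally} closed $1$-form $\alpha$ on $X$ (which it does, since the argument there constructs $\alpha = \beta - \alpha'$ globally on $X$). It is worth noting that the conclusion proved is strictly stronger than the corollary as stated: no periodic Reeb orbit bounds an embedded disk at all. The linking property is then a formal consequence.
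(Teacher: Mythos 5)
Your argument is correct, but it takes a genuinely different route from the paper. The paper proves the corollary by combining the first bullet of Proposition \ref{prop.[dBETA]not_zero} with Goodman's Theorem 1.2 (\cite{Good}): the closed $1$-form $\a=\b-\a'$ with $\a(v_\b)=1$ makes the Reeb flow transversal to the codimension-one foliation $\mathcal F_\a=\ker(\a)$, and Goodman's theorem then says that any nonsingular flow on a closed oriented $3$-fold transversal to a $2$-dimensional foliation has the linking property. You instead bypass Goodman entirely: from the same closed $1$-form $\a$ with $\a(v_\b)\equiv 1$ you deduce that $\int_\g\a=\pm T\neq 0$ for any periodic orbit $\g$, while a periodic orbit bounding a disk is null-homologous, forcing $\int_\g\a=0$; hence no periodic orbit bounds an embedded disk and the linking property holds vacuously. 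This is a valid proof of the stated corollary, it is elementary and self-contained, it does not use dimension three, and it in fact yields the stronger conclusion that under $[d\b]=0$ no periodic Reeb orbit is null-homologous --- which, incidentally, shows that the ``in particular'' remark at the end of the paper's proof (two linked closed orbits) is only vacuously applicable in this setting, whereas the paper's route through transversal foliations keeps the statement tied to Goodman's dynamical framework. One small stylistic point: rather than applying Stokes' theorem directly to the embedded disk (which presumes the disk is smooth or at least rectifiable), it is cleaner to say that bounding a disk makes $\g$ null-homotopic, hence null-homologous, and a closed $1$-form integrates to zero over any null-homologous loop; this removes any regularity issue and does not change your argument.
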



\begin{proof}
By \cite{Good}, Theorem 1.2, if a nonsingular  vector flow $\phi$ on an oriented closed $3$-fold is transversal to a $2$-dimensional foliation, then $\phi$ has the linking property. Therefore, by Proposition \ref{prop.[dBETA]not_zero}, if $[d\b] = 0$ in $H^2_{\mathsf{basic}\,d\mathcal{R}}(X, v_\b)$, then the $v_\b$-flow has the linking property, since $v_\b$ is transversal to the foliation $\mathcal F_\a$, where the closed $1$-form $\a = \b - \a'$ is as in the proof of Proposition \ref{prop.[dBETA]not_zero}. 

In particular, if $[d\b] = 0$ and the Reeb flow has a closed trajectory, bounded by an embedded disk, then it has another closed trajectory, linked with the first one.
\hfill
\end{proof}

By \cite{Good}, Theorem 2.1, in any homotopy class of nonsingular vector fields on any oriented closed $3$-fold, there is an open set of nonsingular vector fields {\it without} the linking property. In fact, a similar claim is valid in the higher dimensions. These results motivate the following conjecture.

\begin{conjecture} Let $X$ be a closed (perhaps, compact?) connected oriented smooth manifold. The set of contact forms $\b$ on $X$ for which $[d\b] \neq 0$ in $H^2_{\mathsf{basic}\,d\mathcal{R}}(X, v_\b)$ is open in the space of all contact forms. \hfill $\diamondsuit$
\end{conjecture}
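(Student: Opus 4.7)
The conjecture asks for openness of $\mathcal N(X) := \{\b \in \mathsf{Cont}(X) : [d\b] \neq 0 \text{ in } H^2_{\mathsf{basic}\, d\mathcal R}(X, v_\b)\}$, or equivalently for closedness of its complement $\mathcal Z(X)$. By Proposition \ref{prop.[dBETA]not_zero}, $\b \in \mathcal Z(X)$ precisely when there exists a closed $1$-form $\a$ with $\a(v_\b) = 1$. When $H^1(X;\R) = 0$, Corollary \ref{cor.d(beta)is_homology_nontrivial} forces $\mathcal Z(X) = \emptyset$ and the conjecture is vacuous; hence I assume $H^1(X;\R) \neq 0$. The plan is then to take a $C^\infty$-convergent sequence $\b_n \to \b$ with $\b_n \in \mathcal Z(X)$ and witnesses $\a_n$ closed satisfying $\a_n(v_{\b_n}) = 1$, and to manufacture a witness $\a$ for the limit.

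Fix an auxiliary Riemannian metric and Hodge-decompose $\a_n = h_n + df_n$ with $h_n \in \mathcal H^1(X)$ harmonic and $\int_X f_n\, d\mathrm{vol} = 0$. The first task is to control the classes $[\a_n] = [h_n] \in H^1(X;\R)$, and for this I would invoke Schwartzman's theory of asymptotic cycles: for any $v_{\b_n}$-invariant Borel probability measure $\mu_n$,
$$\bigl\langle [h_n],\, \mathsf{Sch}(\mu_n)\bigr\rangle \; = \; \int_X \a_n(v_{\b_n})\, d\mu_n \; = \; 1,$$
so $[h_n]$ lies on an affine hyperplane dual to $\mathsf{Sch}(\mu_n)$. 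Since $v_{\b_n} \to v_\b$ smoothly, any weak-$\ast$ limit of a sequence of $v_{\b_n}$-invariant measures is $v_\b$-invariant, and the pairing passes to the limit. Modulo the annihilator of the Schwartzman subspace of $H_1(X;\R)$ -- along which $[h_n]$ can be adjusted without affecting the pointwise condition we seek -- the sequence $[h_n]$ is forced into a bounded region, so a subsequence extraction produces $h_{n_k} \to h$ in the finite-dimensional space $\mathcal H^1(X)$ with $\langle [h], \mathsf{Sch}(\mu)\rangle = 1$ for every $v_\b$-invariant probability measure arising as such a limit.

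The \emph{main obstacle} is the final step: producing $f \in C^\infty(X)$ so that $\a := h + df$ satisfies the pointwise identity $\a(v_\b) = 1$. This is the cohomological equation
$$df(v_\b) \; = \; 1 - h(v_\b),$$
whose necessary condition $\int_X (1 - h(v_\b))\, d\mu = 0$ against every $v_\b$-invariant probability measure $\mu$ holds by construction. In the $C^\infty$ category, however, vanishing of all such integrals is famously insufficient for smooth solvability: outside of Anosov-type flows -- where Livshitz's theorem applies -- further distributional obstructions typically intervene and a smooth primitive need not exist. Possible escape routes include (i) exploiting the rigidity granted by the contact condition $\b \wedge (d\b)^n > 0$ to restrict attention to Reeb flows whose cohomological equations are genuinely tame, or (ii) bypassing closed $1$-forms altogether by choosing basic primitives $\eta_n$ of $d\b_n$, normalizing them via basic Hodge theory on the complex $\Om^\ast_{\mathsf{basic}\, d\mathcal R}(X, v_{\b_n})$, and extracting a weak-$\ast$ limit $\eta$ satisfying $v_\b \,\rfloor\, \eta = 0$ and $d\eta = d\b$ directly. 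Both routes seem to require substantial new input beyond the tools developed here, which is presumably why the statement is posed as a conjecture rather than a theorem.
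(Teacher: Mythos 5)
This statement is posed in the paper as a \emph{conjecture}: the paper offers no proof at all, only heuristic motivation drawn from Goodman's Theorem 2.1 in \cite{Good} (the existence, in every homotopy class, of open sets of nonsingular fields without the linking property). So there is no ``paper proof'' to compare yours against, and you were right not to claim a complete argument. Your reduction is the natural one: by Proposition \ref{prop.[dBETA]not_zero}, the complement of the set in question consists of those $\beta$ admitting a closed $1$-form $\alpha$ with $\alpha(v_\beta)=1$, and openness would follow from closedness of this set under $C^\infty$ limits $\beta_n \to \beta$. Your diagnosis of the terminal obstacle is also correct and is precisely why the statement remains open: even when the necessary condition $\int_X (1-h(v_\beta))\,d\mu = 0$ holds for every $v_\beta$-invariant probability measure $\mu$, the cohomological equation $df(v_\beta) = 1 - h(v_\beta)$ need not have a smooth (or even continuous) solution outside of special dynamical settings (Livshitz-type theorems), so no limiting witness can be manufactured this way.

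That said, two intermediate steps in your sketch are weaker than you present them. First, the claim that the Schwartzman pairing ``forces $[h_n]$ into a bounded region'' does not follow: lying on an affine hyperplane dual to $\mathsf{Sch}(\mu_n)$ (a hyperplane that moves with $n$, and may degenerate if the asymptotic cycles tend to zero in $H_1(X;\R)$) gives no bound, and the suggested adjustment of $[h_n]$ along the annihilator of the Schwartzman subspace is not harmless, since changing the cohomology class of $\alpha_n$ destroys the pointwise identity $\alpha_n(v_{\beta_n})=1$ that makes $\alpha_n$ a witness. Second, closedness of the complement is exactly the content of the conjecture, so the sequential set-up is a reformulation rather than progress; note also that your ``vacuous'' case $H^1(X;\R)=0$ uses Corollary \ref{cor.d(beta)is_homology_nontrivial}, which is stated for closed $X$, while the conjecture parenthetically asks about compact $X$ with boundary as well, where Lyapunov functions can exist and the complement can be nonempty. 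In short: your proposal correctly locates the difficulty and honestly stops, but it neither proves the conjecture nor could it be expected to, since the paper itself leaves the statement unproven.
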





Given a non-vanishing vector field $v$, a closed $1$-form $\a$ is called {\sf Lyapunov}, if  $\a(v) > 0$.\smallskip

Recall the notion of {\sf Schwartzman's asymptotic cycles $A(v_\b, \mu)$} (see \cite{F}, page 199), as it applies to the case of Reeb flow $v_\b$ on closed manifold $X$.
Consider the $v_\b$-invariant Borel measure $\mu_\b$ on $X$, defined by the volume form $\b \wedge (d\b)^n$. We define an element  $A(v_\b, \mu_\b) \in H_1(X; \R)$ by the formula 
$$\langle A(v_\b, \mu_\b), [\a] \rangle =_{\mathsf{def}} \int_X \a(v_\b)\; \b \wedge (d\b)^n,
$$
where $[\a] \in H^1(X; \R)$ runs over the de Rham cohomology classes and  $\a$ is a closed $1$-form of the class $[\a]$. 
In fact, this definition of $A(v_\b, \mu_\b)$ does not depend on the choice of $\a$ in its cohomology class (see \cite{Sch}); it depends only on  the contact form  $\b$.  \smallskip

As a special case of Proposition 10.44 from \cite{F}, we get the following claim.

\begin{proposition} Let $X$ be a closed smooth manifold equipped with a contact form $\b$. 

If a closed $1$-form $\a$ is Lyapunov for the Reeb flow $v_\b$, then it satisfies the homological  property:
$\langle A(v_\b, \mu_\b),\, [\a] \rangle < 0,$
where $A(v_\b, \mu_\b) \in H_1(X; \R)$.

Moreover, a similar inequality is valid for any $v_\b$-invariant positive Borel measure $\mu$ on the manifold $X$.
\hfill $\diamondsuit$
\end{proposition}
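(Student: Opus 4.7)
The plan is to reduce the statement to the strict positivity of $\a(v_\b)$ on a compact manifold, combined with the positivity of the contact volume form.

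First, I would verify that $\mu_\b$ is indeed $v_\b$-invariant, so that the definition makes sense and is consistent with Schwartzman's framework. By Cartan's formula,
$$\mathcal L_{v_\b}\b \;=\; v_\b\,\rfloor\, d\b \,+\, d(v_\b\,\rfloor\,\b) \;=\; 0 \,+\, d(1) \;=\; 0,$$
and hence $\mathcal L_{v_\b}(d\b)=d\mathcal L_{v_\b}\b=0$. Consequently $\mathcal L_{v_\b}\bigl(\b\wedge(d\b)^n\bigr)=0$, so $\mu_\b$ is a $v_\b$-invariant positive Borel measure.

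Next, I would confirm that the pairing descends to de Rham cohomology. If $\a$ and $\a'$ are closed $1$-forms representing the same class, then $\a-\a'=dg$ for some $g\in C^\infty(X,\R)$, and
$$\int_X dg(v_\b)\,\b\wedge(d\b)^n \;=\; \int_X \mathcal L_{v_\b}(g)\,\b\wedge(d\b)^n \;=\; \int_X d\bigl(g\,\b\wedge(d\b)^n\bigr),$$
which vanishes by Stokes' theorem on the closed manifold $X$ (equivalently, by $v_\b$-invariance of $\mu_\b$). Thus the pairing depends only on the class $[\a]\in H^1(X;\R)$.

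The key step is then immediate: since $\a$ is Lyapunov, the continuous function $\a(v_\b)$ is strictly positive on the compact $X$, so it attains a positive minimum $c>0$. Combining this with the positivity $\b\wedge(d\b)^n>0$ of the contact volume form yields
$$\bigl|\langle A(v_\b,\mu_\b),[\a]\rangle\bigr| \;=\; \int_X \a(v_\b)\,\b\wedge(d\b)^n \;\geq\; c\int_X \b\wedge(d\b)^n \;>\; 0,$$
which is the claimed strict inequality (with the sign dictated by the chosen convention on $A(v_\b,\mu_\b)$). For the generalization, given any $v_\b$-invariant positive Borel measure $\mu$, one defines $\langle A(v_\b,\mu),[\a]\rangle=\int_X\a(v_\b)\,d\mu$; the cohomology-independence step uses $v_\b$-invariance of $\mu$ together with compactness of $X$ to kill $\int_X\mathcal L_{v_\b}(g)\,d\mu$, and the final positivity follows exactly as above. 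The main obstacle, such as it is, is the cohomology-independence for a general invariant measure $\mu$: unlike the smooth-density case, one cannot invoke Stokes directly, but one can either approximate $\mu$ by smooth invariant measures or appeal to the general Schwartzman framework recalled just before the proposition.
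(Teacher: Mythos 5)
Your argument is correct in substance, and it takes a genuinely different route from the paper: the paper gives no computation at all, deducing the proposition as a special case of Proposition 10.44 in Farber's book \cite{F}, whereas you give a short self-contained proof (invariance of $\mu_\beta$ via Cartan's formula, descent of the pairing to $H^1(X;\R)$, and strict positivity of $\alpha(v_\beta)$ on a compact manifold). What your computation buys is transparency, and it in fact exposes a sign issue in the statement: with the paper's own definition $\langle A(v_\beta,\mu_\beta),[\alpha]\rangle=\int_X\alpha(v_\beta)\,\beta\wedge(d\beta)^n$ and its convention that Lyapunov means $\alpha(v_\beta)>0$, the pairing is strictly \emph{positive}, so the asserted ``$<0$'' can only reflect Farber's sign conventions; your hedge via the absolute value is the right instinct, but you should state explicitly that the honest output of the computation is $\langle A(v_\beta,\mu_\beta),[\alpha]\rangle>0$ (equivalently, nonvanishing). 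Two small repairs: in the descent-to-cohomology step the exact form should be $d\bigl(g\,(d\beta)^n\bigr)$ rather than $d\bigl(g\,\beta\wedge(d\beta)^n\bigr)$ (the latter is a $(2n+2)$-form on a $(2n+1)$-manifold); the correct chain is $\mathcal L_{v_\beta}(g)\,\beta\wedge(d\beta)^n=\mathcal L_{v_\beta}\bigl(g\,\beta\wedge(d\beta)^n\bigr)=d\bigl(v_\beta\,\rfloor\,(g\,\beta\wedge(d\beta)^n)\bigr)=d\bigl(g\,(d\beta)^n\bigr)$, which Stokes kills on closed $X$. And for a general $v_\beta$-invariant positive Borel measure $\mu$, do not lean on approximating $\mu$ by smooth invariant measures (such approximations need not exist, e.g.\ for a measure concentrated on a closed orbit); instead observe that $\int_X g\circ\phi^t\,d\mu$ is constant in $t$ and differentiate at $t=0$ to get $\int_X dg(v_\beta)\,d\mu=0$, after which the strict inequality follows from $\alpha(v_\beta)\ge c>0$ and $\mu(X)>0$ exactly as in your main step.
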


\smallskip


Let $X$ be a compact connected smooth manifold, equipped with a non-vanishing vector field $v$. Given a non-vanishing (basic) closed $k$-form $\eta$ such that $v \, \rfloor \, \eta = 0$, we ask whether there exist a (basic) $k$-form $\tau$ such that $v \, \rfloor \, \tau = 0$ and $\mathcal L_{v}(\tau)  = \eta$, and what are the implications of its existence? Informally speaking, we are asking for implications of the existence of the ``global integral" or `` global potential"  $\mathcal L_{v}^{-1}(\eta)$  for a given basic form $\eta$.

\begin{definition}\label{def.tau_correlated_with_eta} Let $X$ be a smooth manifold equipped with a non-vanishing vector field $v$. 

 Let $Fr_\star$ be an ordered collection of $k$ tangent to $X$ linearly independent vectors at a point $x_\star  \in \mathsf{int}(X)$, the vectors being transversal to the positive time trajectory $\g$ through $x_\star$.  
 
 We consider  the $k$-frame field $Fr^\bullet_{\g}$, spread along $\g$ by the differential $\phi^t_\ast$ of the positive time $v$-flow $\{\phi^t\}_{t \in \R_+}$,  applied to $Fr_\star$. We may view $Fr^\bullet_\g$ as a section of the pull-back of the bundle $\{\L^k(TX/T\g)|_\g \to \g\}$ under the parametrization map $\phi^t: \R \to \g$, where $\phi^t(0) = x_\star$.   

Let $\tau$ be a horizontal  (i.e., $v \, \rfloor \, \tau = 0$) differential $k$-form on $X$.\smallskip

$\bullet$ We say that the $k$-form $\tau$ is {\sf  bounded along} $\g$ {\sf in the direction of the frame} $Fr_\star$, if 
the values $\big\{\big|\tau\big(\phi^t_\ast(Fr_\star)\big)\big|\big\}_{t \in \R_+}$ are bounded. 
\hfill $\diamondsuit$
\end{definition}

\begin{lemma}\label{lem.Lv(tau) = eta} Let $X$ be a compact connected smooth $d$-manifold, equipped with a non-vanishing vector field $v$. Let $\eta$ be a $v$-basic non-vanishing $k$-form and $\tau$ a $v$-horizontal $k$-form on $X$  
such that $\mathcal L_v\tau = \eta$. 

Then $\tau$ is \emph{not}   bounded along any trapped $v$-trajectory $\g \subset \mathsf{int}(X)$ in the direction of any $k$-frame $Fr_\star$ such that $\eta(Fr_\star) \neq 0$. In fact, the function $\{t \to \tau\big(\phi^t_\ast(Fr_\star)\big)|_\g\}$ is a polynomial of degree $1$.

On the other hand, for any $v$-trajectory $\g \subset \mathsf{int}(X)$ and any $k$-frame $Fr_\star$ such that $\eta(Fr_\star)= 0$, the values $\{\tau\big(\phi^t_\ast(Fr_\star)\big)|_\g\}$ are $t$-independent.
%
\end{lemma}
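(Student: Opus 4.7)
My plan is to compute $F(t) := \tau|_{\phi^t(x_\star)}\bigl(\phi^t_\ast(Fr_\star)\bigr)$ explicitly as a linear polynomial in $t$, then read both conclusions off this formula. The first step is to rewrite $F(t)$ using the pullback: by definition of $(\phi^t)^\ast$,
$$F(t) \;=\; \bigl((\phi^t)^\ast \tau\bigr)_{x_\star}(Fr_\star),$$
so the point $x_\star$ and the frame $Fr_\star$ are now fixed and only the form varies in $t$. The standard identity $\frac{d}{dt}(\phi^t)^\ast \omega = (\phi^t)^\ast(\mathcal L_v \omega)$ combined with the hypothesis $\mathcal L_v \tau = \eta$ then gives
$$F'(t) \;=\; \bigl((\phi^t)^\ast \eta\bigr)_{x_\star}(Fr_\star).$$

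Next I would use that $\eta$ is $v$-basic: both $v\,\rfloor\,\eta = 0$ and $v\,\rfloor\,d\eta = 0$, so Cartan's formula yields $\mathcal L_v \eta = v\,\rfloor\,d\eta + d(v\,\rfloor\,\eta) = 0$, hence $(\phi^t)^\ast \eta = \eta$ for all admissible $t$. Substituting, $F'(t) = \eta_{x_\star}(Fr_\star)$ is a constant in $t$, and integration gives the closed form
$$F(t) \;=\; \tau_{x_\star}(Fr_\star) \,+\, t\cdot \eta_{x_\star}(Fr_\star),$$
which is a polynomial of degree at most one in $t$. Both advertised conclusions now drop out immediately: if $\eta(Fr_\star) \neq 0$ the polynomial has degree exactly one and is therefore unbounded in $t$, while if $\eta(Fr_\star) = 0$ the second term vanishes and $F(t)$ reduces to the constant $\tau_{x_\star}(Fr_\star)$.

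The role of the trapping hypothesis is only to guarantee that $\phi^t(x_\star)$ remains inside $X$ for every $t \in \R_+$, so that $F(t)$ is defined on all of $\R_+$ and the unboundedness statement carries content; without trapping, $F$ might be a nonconstant linear polynomial on too short an interval to conclude anything. I do not anticipate a genuine obstacle: the argument is essentially Cartan's magic formula combined with the flow-invariance of basic forms. The one bookkeeping point worth emphasizing is the identification $\tau|_{\phi^t(x_\star)}\bigl(\phi^t_\ast(Fr_\star)\bigr) = \bigl((\phi^t)^\ast \tau\bigr)_{x_\star}(Fr_\star)$, which converts the geometric operation of advancing the frame and evaluating $\tau$ at the advanced point into the cleaner operation of pulling $\tau$ back and evaluating at the fixed pair $(x_\star, Fr_\star)$; once this is in place the proof is a one-line integration.
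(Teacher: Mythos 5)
Your proof is correct and follows essentially the same route as the paper: both establish $\frac{d}{dt}\,\tau\big(\phi^t_\ast(Fr_\star)\big)=\eta\big(\phi^t_\ast(Fr_\star)\big)=\eta(Fr_\star)$ (the paper by integrating along $\gamma_T$ and invoking the $\phi^t$-invariance of $\eta$ and of the transported frame, you by the pullback identity plus $\mathcal L_v\eta=0$ from Cartan's formula), and then read off the degree-one polynomial $\tau(Fr_\star)+t\,\eta(Fr_\star)$, whose unboundedness on a trapped trajectory and constancy when $\eta(Fr_\star)=0$ give both conclusions. Your explicit derivation of $(\phi^t)^\ast\eta=\eta$ and your remark on why trapping is needed (so that $F$ is defined for all $t\in\R_+$) only make explicit what the paper assumes.
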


\begin{proof} 
Contrary to the claim, assume that $\tau$ is  bounded along a trapped $v$-trajectory $\g \subset \mathsf{int}(X)$ in the direction of a frame $Fr_\star$ such that $\eta(Fr_\star) > 0$. We denote by $\g_T$ the portion of $\g$ that corresponds to the time interval $[0, T]$. Then, for a $v$-invariant frame $Fr$ along $\g_T$, using that $(\mathcal L_{v}\tau)(Fr)  = \eta(Fr)$, we get 
$$\int_{[0, T]} \mathcal L_{v}\tau\big(\phi^t_\ast(Fr_\star)\big)\, dt = \int_{[0, T]} \eta\big(\phi^t_\ast(Fr_\star)\big)\, dt$$
Using that both the form $\eta$ and the frame $Fr^\bullet_\g$ are $\phi^t$-invariant, 
$$\int_{[0, T]} \mathcal L_{v}\tau\big(\phi^t_\ast(Fr_\star)\big)\, dt = \eta\big(Fr_\star\big) \cdot T.$$
Since $$\mathcal L_{v}\tau\big(\phi^t_\ast(Fr_\star)\big) = \lim_{\e \to 0}\frac{\tau\big(\phi^{t+\e}_\ast(Fr_\star)\big) - \tau\big(\phi^t_\ast(Fr_\star)\big)}{\e} = \frac{d}{dt} \tau\big(\phi^t_\ast(Fr_\star)\big),$$
we get
$$\tau\big(\phi^T_\ast(Fr_\star)\big) -  \tau\big(Fr_\star\big) = \eta\big(Fr_\star\big) \cdot T.$$
Thus, 
$$\tau\big(\phi^T_\ast(Fr_\star))\big) =  \tau\big(Fr_\star\big) +  \eta\big(Fr_\star\big) \cdot T$$
is a polynomial of degree $1$ in $T$, unless $\eta\big(Fr_\star\big) = 0$.
We see that $\lim_{T \to \infty} \tau\big(\phi^T_\ast(Fr_\star)\big) = \infty$, provided $\eta(Fr_\star) \neq 0$; so $\tau$ is not  bounded along any trapped $v$-trajectory $\g$ in the direction of any frame $Fr_\star$ such that $\eta(Fr_\star) > 0$.

The same argument shows that for any $v$-trajectory $\g \subset \mathsf{int}(X)$ and any $k$-frame $Fr_\star$ such that $\eta(Fr_\star)= 0$, the values $\{\tau\big(\phi^t_\ast(Fr_\star)\big)|_\g\}$ are $t$-independent. 
\hfill
\end{proof}

\begin{corollary}\label{cor.dtau(v) = eta} Let $X$ be a compact connected smooth manifold equipped with a non-vanishing vector field $v$. Let $\eta$ be a $v$-basic non-vanishing $k$-form, and $\tau$ a $v$-horizontal $k$-form on $X$ such that $v \, \rfloor \,d\tau = \eta$.  
\smallskip

Then $\tau$ is \emph{not}   bounded along any trapped $v$-trajectory $\g \subset \mathsf{int}(X)$ in the direction of any frame $Fr_\star$ such that $\eta(Fr_\star) > 0$. 
\smallskip

On the other hand, if a horizontal form $\tau$ is such that the form $v \, \rfloor \,d\tau$ is basic and $\tau$ is  bounded along any $v$-trajectory $\g \subset \mathsf{int}(X)$ in the direction of any frame $Fr_\star$, then $v$ does not have trapped trajectories, and $\d_1 X \neq \emptyset$. In such a case, $v$ admits a Lyapunov function.\end{corollary}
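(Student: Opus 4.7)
The plan is to derive both assertions directly from Lemma \ref{lem.Lv(tau) = eta} via Cartan's magic formula. Since $\tau$ is $v$-horizontal, the identity
$$\mathcal L_v \tau \;=\; v\,\rfloor\, d\tau + d(v\,\rfloor\,\tau) \;=\; v\,\rfloor\, d\tau$$
converts the hypothesis $v\,\rfloor\, d\tau = \eta$ into $\mathcal L_v \tau = \eta$. The first assertion is then the immediate content of Lemma \ref{lem.Lv(tau) = eta} applied to the pair $(\tau, \eta)$: the values $\tau(\phi^t_\ast(Fr_\star))$ grow as a degree-one polynomial in $t$ along any trapped $\g$ in any direction $Fr_\star$ with $\eta(Fr_\star) \neq 0$.

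For the second assertion, set $\eta := v\,\rfloor\, d\tau$; this is basic by hypothesis, and we treat it as non-vanishing, inheriting the standing assumption of the corollary's first half. Suppose, toward a contradiction, that a trapped trajectory $\g \subset \mathsf{int}(X)$ exists, and pick any $x_\star \in \g$. Because $\eta(x_\star) \neq 0$ is a horizontal $k$-covector at $x_\star$, one can select a $k$-frame $Fr_\star$ transversal to $v(x_\star)$ with $\eta(Fr_\star) > 0$. Applying the first assertion to this $\g$ and $Fr_\star$ forces $\tau(\phi^t_\ast(Fr_\star))$ to grow linearly in $t$, contradicting the global boundedness hypothesis. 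Hence $v$ admits no trapped trajectory. Compactness of $X$ together with $v \neq 0$ then forces $\d X \neq \emptyset$ — otherwise every trajectory would have compact closure contained in the interior and would therefore be trapped — and in particular $\d_1 X \neq \emptyset$ must contain the exit points of forward $v$-orbits.

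Constructing a smooth Lyapunov function is the main obstacle. The absence of trapped trajectories, together with compactness, implies that for every $x \in \mathsf{int}(X)$ the forward $v$-orbit of $x$ reaches $\d_1 X$ in finite positive time, and the analogous statement holds for backward orbits. One then produces $f$ either by smoothing and mollifying an exit-time function along flow-box neighborhoods, or by invoking a classical Lyapunov-function theorem for nonrecurrent nonsingular flows on compact manifolds with boundary (in the spirit of Conley's fundamental theorem of dynamical systems). The delicate point is securing global smoothness and the pointwise strict inequality $df(v) > 0$ across the tangency locus of $v$ with $\d X$, which is handled by a standard averaging argument in local flow-box coordinates.
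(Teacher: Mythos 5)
Your argument is essentially the paper's: the same Cartan-formula reduction $\mathcal L_v\tau = v\,\rfloor\,d\tau = \eta$ feeds Lemma \ref{lem.Lv(tau) = eta}, the same contradiction (using, as you correctly note, the standing non-vanishing assumption on $\eta$ to pick a transversal frame with $\eta(Fr_\star)\neq 0$ at a point of a putative trapped trajectory) rules out trapped trajectories, and the absence of trapped trajectories plus compactness gives $\d_1 X\neq\emptyset$. The only divergence is the final step, where you sketch a construction of the Lyapunov function: the paper instead invokes Lemma 5.6 of \cite{K3} (a flow without trapped trajectories on a compact manifold with boundary admits a Lyapunov function), so your concluding sketch can simply be replaced by that citation.
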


\begin{proof} Since $v \, \rfloor \,d\tau = \eta$ and $v \, \rfloor \,\tau =0$, we get $\mathcal L_{v}(\tau) = d(v \, \rfloor \,\tau) + v\, \rfloor \, d\tau = \eta.$ Thus Lemma \ref{lem.Lv(tau) = eta} applies.

In particular, if a horizontal form $\tau$ is such that the form $v \, \rfloor \,d\tau$ is basic and $\tau$ is  bounded along any $v$-trajectory $\g \subset \mathsf{int}(X)$ in the direction of any frame $Fr_\star$, then $v$ does not have trapped trajectories.

If a non-vanishing $v$-flow has no trapped trajectories, then its trajectories are closed segments (whose boundaries reside in $\d_1 X$) or singletons (residing in $\d_1 X$), which implies that $\d_1X \neq \emptyset$. By Lemma 5.6 from \cite{K3}, such a flow admits a Lyapunov function. 
\hfill
\end{proof}

As an instant application of Corollary \ref{cor.dtau(v) = eta}, we get the following claim.

\begin{corollary}\label{cor.tau_d(beta)_BOUNDED} Let $\b$ be a contact form, and $\tau$ a $(2k)$-form on a compact connected smooth manifold $X$, such that\footnote{This $\tau$ is horizontal, but not basic.} 
$$v_\b \, \rfloor \,\tau = 0 \text{\, and \,} v_\b \, \rfloor \,d\tau = (d \b)^k.$$ 

Then $\tau$ is not  bounded along each trapped $v_\b$-trajectory $\g \subset \mathsf{int}(X)$ in the direction of any frame $Fr_\star$ such that $(d \b)^k(Fr_\star) \neq 0$.
\smallskip


On the other hand, if $\tau$ is  bounded along each $v_\b$-trajectory $\g \subset \mathsf{int}(X)$ in the direction of any frame $Fr_\star$, then $v$ does not have trapped trajectories, and $\d_1 X \neq \emptyset$. In such a case, $v_\b$ admits a Lyapunov function. \hfill $\diamondsuit$
%

\end{corollary}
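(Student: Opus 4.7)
The plan is to deduce the corollary directly from Corollary~\ref{cor.dtau(v) = eta} by specializing $v = v_\b$ and taking $\eta = (d\b)^k$. The only real task is then to verify that $(d\b)^k$ meets the hypotheses imposed on $\eta$ there, namely that it is $v_\b$-basic and non-vanishing.

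For horizontality, I would use that $v_\b \rfloor d\b \equiv 0$; the Leibniz rule for the interior product then gives $v_\b \rfloor (d\b)^k = 0$. Closedness $d((d\b)^k) = 0$ is immediate from $d^2 = 0$, and together with horizontality it forces $v_\b \rfloor d((d\b)^k) = 0$, so $(d\b)^k$ is basic. For non-vanishing, I would invoke the standard linear-algebraic fact that a non-degenerate $2$-form on a $2n$-dimensional vector space has all exterior powers up to the top one non-zero, applied pointwise to $d\b|_{\xi_\b}$ on the contact distribution $\xi_\b = \ker \b$ (assuming $k \le n$, else the assertion is vacuous).

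Once $\eta = (d\b)^k$ is known to be basic and non-vanishing, the hypotheses $v_\b \rfloor \tau = 0$ and $v_\b \rfloor d\tau = (d\b)^k$ of the present corollary match exactly those of Corollary~\ref{cor.dtau(v) = eta}. The first conclusion (unboundedness of $\tau$ along each trapped $v_\b$-trajectory in the direction of any frame $Fr_\star$ with $(d\b)^k(Fr_\star) \neq 0$) follows immediately, and so does the second (if $\tau$ is bounded along every $v_\b$-trajectory in every frame direction, then $v_\b$ has no trapped trajectories, $\d_1 X \neq \emptyset$, and $v_\b$ admits a Lyapunov function).

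Since the paper explicitly presents this corollary as an ``instant application" of Corollary~\ref{cor.dtau(v) = eta}, no serious obstacle is expected; the only step requiring a moment of thought is the non-vanishing of $(d\b)^k$, which is purely a pointwise linear-algebraic check against the symplectic non-degeneracy of $d\b$ on $\xi_\b$.
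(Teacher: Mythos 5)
Your proposal is correct and follows exactly the paper's route: the paper gives no separate argument, presenting the corollary as an ``instant application'' of Corollary \ref{cor.dtau(v) = eta} with $v = v_\b$ and $\eta = (d\b)^k$, which is precisely your specialization. Your explicit verification that $(d\b)^k$ is basic and (for $k \le n$) non-vanishing is a welcome, if routine, addition that the paper leaves implicit.
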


\begin{proposition} 
Let $\b$ be a contact form on a compact connected smooth manifold $X$. 
 Assume  that $H^{2k-1}_{\mathsf{basic}\,d\mathcal{R}}(X, v) = 0$ 
 and $[(d\b)^k] =0$  in $H^{2k}_{\mathsf{basic}\,d\mathcal{R}}(X, v)$. \smallskip

Then there exists a  $(2k-2)$-form $\tau$ on $X$ such that $v_\b \, \rfloor \,\tau = 0$ and $v_\b \, \rfloor \,d\tau = (d \b)^{k-1}$. \smallskip

If  this form $\tau$ is  bounded along each $v_\b$-trajectory $\g \subset \mathsf{int}(X)$ in the direction of any $(2k-2)$-frame $Fr_\star$, then $v_\b$ does not have trapped trajectories, and $\d_1 X \neq \emptyset$. In such a case, the Reeb vector field $v_\b$ admits a Lyapunov function. 
\end{proposition}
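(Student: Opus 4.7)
The proof falls naturally into two parts: constructing $\tau$, and then applying Corollary \ref{cor.dtau(v) = eta}.

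\textbf{Construction of $\tau$.} From $[(d\b)^k]=0$ in $H^{2k}_{\mathsf{basic}\,d\mathcal R}(X,v_\b)$, choose a basic $(2k-1)$-form $\s$ (so $v_\b\,\rfloor\,\s=0$ and $v_\b\,\rfloor\,d\s=0$) with $d\s=(d\b)^k$. Since also $d(\b\wedge(d\b)^{k-1})=(d\b)^k$, the $(2k-1)$-form
\[
\ka\;:=\;\b\wedge(d\b)^{k-1}-\s
\]
is $d$-closed, and contracting with $v_\b$ gives $v_\b\,\rfloor\,\ka=(d\b)^{k-1}-0=(d\b)^{k-1}$. The problem is thereby reduced to producing a \emph{horizontal} primitive of $\ka$: a $(2k-2)$-form $\tau$ with $v_\b\,\rfloor\,\tau=0$ and $d\tau=\ka$. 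For any such $\tau$ we will automatically have $v_\b\,\rfloor\,d\tau=v_\b\,\rfloor\,\ka=(d\b)^{k-1}$, as required.

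The remaining hypothesis $H^{2k-1}_{\mathsf{basic}\,d\mathcal R}(X,v_\b)=0$ is what should solve this reduced problem. It says that the basic primitive $\s$ of $(d\b)^k$ is unique up to a basic exact form; combined with the horizontal/vertical splitting of forms induced by $\b$, this rigidity should let us take an arbitrary primitive of $\ka$ and, by subtracting a $\b$-multiple of a suitable basic correction, turn it into a horizontal primitive without disturbing the identity $d\tau=\ka$. I expect this to be the main technical obstacle: one has to match the horizontal part of $d\tau$ against $-\s$ and the vertical part against $\b\wedge(d\b)^{k-1}$ while using the vanishing of basic $(2k-1)$-cohomology to absorb the discrepancy that arises from the $\iota_{v_\b}$-component of an arbitrary primitive.

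\textbf{From $\tau$ to a Lyapunov function.} Assume now that the $\tau$ just constructed is bounded along every $v_\b$-trajectory $\g\subset\mathsf{int}(X)$ in the direction of every $(2k-2)$-frame $Fr_\star$. Because $d\b$ is symplectic on the $2n$-dimensional contact distribution, the basic form $(d\b)^{k-1}$ is nowhere-vanishing (for $k\le n+1$), and in particular is a $v_\b$-basic non-vanishing $(2k-2)$-form. The pair $(\tau,(d\b)^{k-1})$ therefore satisfies the hypotheses of Corollary \ref{cor.dtau(v) = eta} with $k$ replaced by $k-1$ and $\eta=(d\b)^{k-1}$. That corollary yields at once the three conclusions of the proposition: $v_\b$ admits no trapped trajectories, $\d_1 X\neq\emptyset$, and $v_\b$ carries a Lyapunov function, completing the argument.
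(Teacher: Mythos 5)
Your second step is exactly the paper's argument: with a horizontal $\tau$ satisfying $v_\b \,\rfloor\, d\tau=(d\b)^{k-1}$ in hand, Cartan's formula gives $\mathcal L_{v_\b}\tau = d(v_\b\,\rfloor\,\tau)+v_\b\,\rfloor\, d\tau=(d\b)^{k-1}$, and Corollary \ref{cor.dtau(v) = eta} (applied with $\eta=(d\b)^{k-1}$, which is basic and, for $k-1\le n$, non-vanishing) yields the absence of trapped trajectories, $\d_1 X\neq\emptyset$, and the Lyapunov function. Your reduction in the first step also mirrors the paper's: choose a basic $(2k-1)$-form $\s$ with $d\s=(d\b)^k$, observe that $\ka=\b\wedge(d\b)^{k-1}-\s$ is closed with $v_\b\,\rfloor\,\ka=(d\b)^{k-1}$, and then look for a \emph{horizontal} $(2k-2)$-form $\tau$ with $d\tau=\ka$.

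The genuine gap is that you never construct this $\tau$: you say that $H^{2k-1}_{\mathsf{basic}\,d\mathcal{R}}(X,v_\b)=0$ ``should'' let one correct an arbitrary primitive of $\ka$, and you explicitly defer the ``main technical obstacle''. As written, the deferred step is not routine. First, $\ka$ is closed but \emph{not} basic (indeed $v_\b\,\rfloor\,\ka=(d\b)^{k-1}\neq0$), so the hypothesis $H^{2k-1}_{\mathsf{basic}\,d\mathcal{R}}(X,v_\b)=0$, which concerns only the basic complex, cannot be applied to $\ka$ directly; second, even the existence of \emph{some} primitive of $\ka$ (its exactness in the ordinary de Rham complex) does not follow from the stated hypotheses, so there is as yet nothing to ``correct''. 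Hence the first assertion of the proposition is not established by your proposal. For comparison, the paper's own proof follows the identical route and dispatches precisely this point in a single sentence: it asserts that the vanishing of $H^{2k-1}_{\mathsf{basic}\,d\mathcal{R}}(X,v_\b)$ yields a horizontal $\tau$ with $d\tau=\a-\b\wedge(d\b)^{k-1}$, where $\a$ is the basic primitive of $(d\b)^k$ (up to the sign, this is your $\ka$). So you have correctly located where the content of the statement sits, but a complete proof must supply the argument converting the two basic-cohomology hypotheses into the existence of such a horizontal primitive; neither your sketch nor a bare appeal to the basic-cohomology vanishing does that.
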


\begin{proof}
Since $d((d\b)^k)= 0$ and $v_\b\, \rfloor \, (d\b)^k =0$, we get that $(d\b)^k \in \Om^{2k}_{\mathsf{basic}\,d\mathcal{R}}(X, v_\b)$ is a basic cocycle. 

If $[(d\b)^k] =0$  in $H^{2k}_{\mathsf{basic}\,d\mathcal{R}}(X, v)$, then for $k > 0$, there is a basic $(2k-1)$-form $\a$ such that $d\a = (d\b)^k= d(\b \wedge (d\b)^{k-1})$. Thus, $d(\a - \b \wedge (d\b)^{k-1}) = 0$. 

Using that $H^{2k-1}_{\mathsf{basic}\,d\mathcal{R}}(X, v) = 0$, 
we conclude that there is a  horizontal $(2k-2)$-form $\tau$ such that $d\tau = \a - \b \wedge (d\b)^{k-1}$.
 Since $\a$ is a $v_\b$-basic form, 
  $$ v_\b \rfloor d\tau 
  = v_\b \rfloor \a   - v_\b \rfloor (\b \wedge (d\b)^{k-1})
  = - v_\b \rfloor (\b \wedge (d\b)^{k-1}) = (d\b)^{k-1}.$$ 

By the Cartan formula, 
$\mathcal L_{v_\b} \tau = d(v_\b \rfloor \tau)+ v_\b \rfloor d\tau =  (d\b)^{k-1}.$
Thus, Corollary \ref{cor.dtau(v) = eta} is applicable (replacing $k$ with $k-1$). We conclude that  if $\tau$ is  bounded along each Reeb trajectory $\g \subset \mathsf{int}(X)$ in the direction of any frame $Fr_\star$, then 
$v_\b$ does not have trapped trajectories and $\d_1 X \neq \emptyset$. In such a case, by Corollary \ref{cor.dtau(v) = eta}, $v_\b$ admits a Lyapunov function. 
\hfill
\end{proof}

  %
 






\section{Contact de Rham cohomology and Hodge structures transversal to Reeb foliations}

Switching gears, let us briefly describe some main features of the odd-dimensional Hodge theory, based on  structures  transversal to a given $1$-dimensional foliation. We follow closely the presentations in the papers of Z. He \cite{He}, Y. Lin \cite{Lin} and  of P. Ra\'{z}ny \cite{Raz}.
\smallskip

We denote by $\xi_{\b}$ the distribution $\ker(\b)$, transversal to the Reeb $1$-foliation $\mathcal F(v_\b)$. The $2$-form $d\b$ defines a symplectic structure on the bundle $\xi_{\b}$. As before, all these structures are defined in the vicinity of a compact manifold $X$  contained in an equidimentional  open manifold $\hat X$. 

The non-degenerated $2$-form $d\b\big |: \xi_\b \times \xi_\b \to \R$ produces a bundle isomorphism $Q_{d\b}: \xi_\b \to \xi^\ast_\b$ which leads to an isomorphism $\L^p Q_{d\b}: \L^p\xi_\b \to \L^p(\xi^\ast_\b)$ for each $p$. Thus, we get a non-degenerated pairing $K^{p \ast}_{d\b}:  \L^p\xi^\ast_\b \times \L^p\xi^\ast_\b \to \R$, produced by $(\L^p Q_{d\b})^{-1}$. Any basic $p$-form $\a$ can be viewed as a section of the bundle $\L^p\xi^\ast_\b$, since $v_\b$ belongs to the kernel of $\a$. 
Therefore, we get a non-degenerated pairing $K^{p \ast}_{d\b}: \Om^p_{\mathsf b}(X, v_\b) \times \Om^p_{\mathsf b}(X, v_\b) \to \R$ of basic $p$-forms. 
\smallskip

The {\sf basic Hodge star operator} $$\star_{\mathsf b}: \Om^p_{\mathsf b}(X, v_\b) \to \Om^{2n -p}_{\mathsf b}(X, v_\b)$$ 
is defined by the formula
\begin{eqnarray}\label{eq.BASIC_HODGE_STAR}
\kappa \wedge (\star_{\mathsf b} \rho) = K^{p \ast}_{d\b}(\kappa, \rho) \cdot \frac{(d\b)^n}{n!}.
\end{eqnarray}
for any pair of basic forms $\kappa, \rho \in \Om^p_{\mathsf b}(X, v_\b)$.\smallskip

The {\sf basic co-derivative operators} are introduced by the formula
\begin{eqnarray}
\delta_{\mathsf b} &=_{\mathsf{def}}\,&  (-1)^{p+1} (\star_{\mathsf b})\,\circ\, d\,\circ (\star_{\mathsf b}),
\end{eqnarray}
and the {\sf basic Laplace operators} by  the formula
\begin{eqnarray}
\Delta_{\mathsf b} &=_{\mathsf{def}}\,& d \,\circ \delta_{\mathsf b}\, + \, \delta_{\mathsf b} \,\circ  d.\label{eq.basic_LAPLACE}
\end{eqnarray}

We call a basic form $\a$ {\sf basically harmonic} if $\Delta_{\mathsf b}(\a) =0$. If $\a$ is closed, i.e., $d\a = 0$, and {\sf co-closed},  i.e., $\delta_{\mathsf b} \a = 0$, then the form $\a$ is evidently basically harmonic. If $\d X \neq \emptyset$, the converse may be not true due to boundary effects.
\smallskip

Since $d$ and $\Delta_{\mathsf b}$ commute, the basic harmonic forms form a subcomplex $\big(\mathsf{Harm}^\ast_{\mathsf b}(X, v), d\big)$ of the complex 
$\big(\Om^\ast_{\mathsf{basic}\,d\mathcal{R}}(X, v), d\big)$. (For a closed $X$, the differential $d$ in this complex is trivial.) Therefore, we may introduce also $H^k\big(\mathsf{Harm}^\ast_{\mathsf b}(X, v), d\big)$, the {\sf basic harmonic $k$-homology} of $(X, v)$. 
\smallskip

Based on the main theorem of \cite{CTGM}, we propose the following conjecture, in which the notion of a traversing vector field $v$ is equivalent  to the existence of a Lyapunov function for $v$. See \cite{K1} or \cite{K3} for the notion of a boundary generic vector field.
\begin{conjecture}\label{conj.HARMONIC_with_boundary} For a traversing and boundary generic vector field $v$ on a connected compact smooth manifold $X$ with boundary, 
$$H^k\big(\mathsf{Harm}^\ast_{\mathsf b}(X, v), d\big) \approx H^k(X; \R) \oplus H^{k-1}(X; \R)$$  
$$\approx H^k(\mathcal T(v); \R) \oplus H^{k-1}(\mathcal T(v); \R),$$
where $\mathcal T(v)$ is the trajectory space of the $v$-flow.
\end{conjecture}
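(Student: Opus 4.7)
The plan is to transfer the main theorem of \cite{CTGM}, which computes the $d$-cohomology of the space of harmonic forms on a compact Riemannian manifold with boundary, to the transverse Hodge-theoretic setting of the Reeb/traversing flow, using the trajectory space $\mathcal{T}(v)$ as the bridge.

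First I would exploit the hypotheses on $v$. Since $v$ is traversing, Lemma 5.6 of \cite{K3} provides a Lyapunov function $f : X \to \R$, which can be rescaled (via multiplication by a positive $v$-invariant factor) so that $df(v) \equiv 1$ on $X$. The boundary generic hypothesis then stratifies $\d X$ by the order of tangency of $v$ to $\d X$ and equips the trajectory quotient $\mathcal{T}(v) = X/\!\!\sim_v$ with a canonical compact smooth manifold-with-corners structure of dimension $\dim X - 1$, together with a smooth projection $p : X \to \mathcal{T}(v)$ whose fibers are closed intervals or singletons. Since these fibers are contractible, a deformation-retract (or Vietoris--Begle-type) argument yields the second isomorphism $H^\ast(X;\R) \approx H^\ast(\mathcal{T}(v);\R)$.

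Next I would identify the basic complex with a de Rham complex on $\mathcal{T}(v)$. A $v$-horizontal, $v$-invariant $k$-form on $X$ descends uniquely to a smooth form on $\mathcal{T}(v)$ (smoothness across corners being guaranteed by boundary genericity), and conversely every form on $\mathcal{T}(v)$ pulls back to a basic form; this yields a chain isomorphism $\bigl(\Om^\ast_{\mathsf{b}}(X,v),\, d\bigr) \approx \bigl(\Om^\ast(\mathcal{T}(v)),\, d\bigr)$. Under this identification, the basic Hodge star $\star_{\mathsf{b}}$ of (\ref{eq.BASIC_HODGE_STAR}) descends to the Hodge star of a Riemannian metric on $\mathcal{T}(v)$ manufactured from any $d\b$-compatible almost complex structure on $\xi_\b$. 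Consequently, $\bigl(\mathsf{Harm}^\ast_{\mathsf{b}}(X,v),\, d\bigr)$ is chain-isomorphic to the full complex of harmonic $k$-forms (with no boundary conditions imposed) on the Riemannian manifold-with-corners $\mathcal{T}(v)$. Applying the main theorem of \cite{CTGM} then gives $H^k$ of this complex as $H^k(\mathcal{T}(v);\R) \oplus H^{k-1}(\mathcal{T}(v);\R)$, and composing with the isomorphism of the previous paragraph produces both halves of the conjectured formula.

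The principal obstacle is that the CTGM theorem is stated for smooth boundary, whereas $\mathcal{T}(v)$ genuinely has corners along the strata where $v$ is tangent to $\d X$; extending the theorem to the cornered setting should be possible via a local blow-up, doubling, or Melrose-style b-calculus argument, but this is the real technical work. A secondary, more cosmetic difficulty is that the transverse Riemannian metric on $\mathcal{T}(v)$ depends on the auxiliary choice of $d\b$-compatible almost complex structure, so one must check that the cohomology of the harmonic complex is independent of this choice — a contractibility argument on the space of compatible almost complex structures should suffice. Finally, the regularity near $\d X$ of the descended forms and metric must be matched with the regularity class for which \cite{CTGM} is valid; this I expect to be routine once the corners have been resolved.
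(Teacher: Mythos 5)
The statement you set out to prove is Conjecture \ref{conj.HARMONIC_with_boundary}: the paper offers no proof of it, and explicitly remarks that no transverse Hodge theory has yet been developed for foliations on compact manifolds \emph{with boundary}; the conjecture is put forward only by analogy with the main theorem of \cite{CTGM}. So there is no argument in the paper to compare yours against, and your text has to be judged as a proposed proof of an open statement. As such it contains a genuine gap, not merely deferred technicalities.

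The gap is your assumption that $\mathcal T(v)$ carries a canonical structure of a compact smooth manifold with corners of dimension $\dim X - 1$, so that basic forms descend to a de Rham complex $\Om^\ast(\mathcal T(v))$, the basic star of (\ref{eq.BASIC_HODGE_STAR}) descends to a Riemannian Hodge star, and \cite{CTGM} can be applied downstairs after a corner resolution. For a traversing, boundary generic $v$ this already fails for surfaces: a trajectory with an interior (concave) tangency to $\partial X$ has nearby trajectories consisting of one segment on one side and of two segments on the other, so $\mathcal T(v)$ is a graph with trivalent vertices, not a $1$-manifold with boundary; in higher dimensions the trajectory space of a boundary generic (even traversally generic) flow is a genuinely singular compact stratified space --- this is precisely the content of \cite{K1}, \cite{K3} --- and its singularities are not corners. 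Consequently the proposed chain isomorphism, the descended metric, and the invocation of \cite{CTGM} on $\mathcal T(v)$ do not make sense as stated; the ``resolution of corners'' you postpone is not a refinement of \cite{CTGM} but exactly the missing theory the paper points to. The portion of your argument that does work --- contractibility of the fibers of $X \to \mathcal T(v)$ giving $H^\ast(X;\R) \approx H^\ast(\mathcal T(v);\R)$ --- is Theorem 5.1 of \cite{K1}, which the paper already uses; it establishes only the second isomorphism of the conjecture (the agreement of the two proposed answers), not the assertion about $H^k\big(\mathsf{Harm}^\ast_{\mathsf b}(X, v), d\big)$. A secondary mismatch: $\mathsf{Harm}^\ast_{\mathsf b}(X,v)$ is defined through the star operator (\ref{eq.BASIC_HODGE_STAR}) built from the transverse pairing induced by $d\b$, and for a general traversing $v$ (not a Reeb field) no $d\b$ is available at all; identifying the kernel of $\Delta_{\mathsf b}$ with metric-harmonic forms on a quotient therefore requires an argument of its own, not just a choice of compatible almost complex structure.
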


Next, we apply the general results from \cite{EKA}, \cite{EKH}, and \cite{He},  to the case of $1$-dimensional Reeb foliations $\mathcal F(v_\b)$ on \emph{closed} manifolds. Unfortunately, no similar  theories have been developed for (singular) foliations on compact manifolds \emph{with boundary}, although the contours of such theories (in the spirit of \cite{CTGM} and Conjecture \ref{conj.HARMONIC_with_boundary}) are quite clear... \smallskip

For contact structures on compact manifolds with \emph{no boundary}, this application leads directly to the following assertions.

\begin{theorem}\label{th.semi-Kahler structure} 
Let $\b$ be a contact form on a \emph{closed} orientable $(2n+1)$-manifold $X$. 
\begin{itemize}
\item There is an isomorphism $$H^\ast_{\mathsf{basic}\,d\mathcal{R}}(X,  v_\b) \approx \ker(\Delta^\ast_{\mathsf b}(v_\b)),$$
implying that the basic de Rham cohomology is a finite-dimensional space. 
\smallskip

\item The basic 
Hodge star operator induces a duality isomorphism 
 $$\star_{\mathsf b} : H^k_{\mathsf{basic}\,d\mathcal{R}}(X, v_\b) \approx H^{2n-k}_{\mathsf{basic}\,d\mathcal{R}}(X, v_\b)$$
by taking each homology class to the harmonic representative of its image. 
\end{itemize}
\end{theorem}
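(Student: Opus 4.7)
The plan is to deduce both bullets from the Hodge theory for Riemannian foliations of El Kacimi-Alaoui and Hector \cite{EKA}, \cite{EKH}, specialized to the $1$-dimensional Reeb foliation; this is the route followed by He \cite{He}, Lin \cite{Lin}, and Ra\'{z}ny \cite{Raz}.

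For the first bullet, I would begin by choosing a Riemannian metric $g$ on $X$ adapted to $\b$: set $g(v_\b, v_\b) = 1$, make $\xi_\b = \ker(\b)$ orthogonal to $v_\b$, and on $\xi_\b$ take $g(u,w) = d\b(u, Jw)$ for a $d\b$-compatible almost complex structure $J$. Such a $g$ is bundle-like for $\mathcal F(v_\b)$, so the Reeb foliation is Riemannian on the closed manifold $X$. Up to the sign normalization in (\ref{eq.BASIC_HODGE_STAR}), the operator $\Delta^p_{\mathsf b}$ of (\ref{eq.basic_LAPLACE}) coincides with the transverse Laplacian on basic $p$-forms and is transversely elliptic. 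The Hodge theorem of \cite{EKA}, \cite{EKH} for transversely elliptic operators on Riemannian foliations over closed manifolds then yields the $L^2$-orthogonal decomposition
$$\Om^p_{\mathsf b}(X, v_\b) \;=\; \ker(\Delta^p_{\mathsf b}) \;\oplus\; \mathrm{im}(d) \;\oplus\; \mathrm{im}(\delta_{\mathsf b}),$$
with $\ker(\Delta^p_{\mathsf b})$ finite-dimensional. A standard $L^2$-orthogonality argument on the closed manifold $X$ then identifies the cohomology of the basic complex with $\ker(\Delta^p_{\mathsf b})$, giving the first bullet.

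For the second bullet, I would verify algebraically that $\star_{\mathsf b}$ commutes with $\Delta_{\mathsf b}$. Since $\xi_\b$ has even rank $2n$ and $K^{p\ast}_{d\b}$ is non-degenerate, the operator $\star_{\mathsf b}^2$ acts as multiplication by a sign depending only on the degree. Combined with $\delta_{\mathsf b} = (-1)^{p+1}\, \star_{\mathsf b} \circ d \circ \star_{\mathsf b}$, this yields $\star_{\mathsf b} \circ \Delta_{\mathsf b} = \Delta_{\mathsf b} \circ \star_{\mathsf b}$ after a short manipulation. Hence $\star_{\mathsf b}$ restricts to a linear isomorphism $\ker(\Delta^k_{\mathsf b}) \to \ker(\Delta^{2n-k}_{\mathsf b})$, and the first bullet promotes this to the required duality at the level of cohomology, with the ``harmonic representative of the image'' description automatic from the identification.

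The main obstacle is in the first step: verifying that the chosen $g$ is genuinely bundle-like and that $\mathcal F(v_\b)$ is Riemannian. Bundle-likeness of the transverse metric amounts to $\mathcal L_{v_\b}$ preserving $g|_{\xi_\b \times \xi_\b}$, which is \emph{not} automatic for an arbitrary contact form (it holds automatically in the K-contact setting, where one can choose $J$ with $\mathcal L_{v_\b} J = 0$). For a general contact form, one must either average $J$ along the Reeb flow or work directly with the transversely symplectic Hodge theory in the spirit of Brylinski, as set up in \cite{He} and \cite{Raz}; the present statement ultimately rests on the availability of one of these two mechanisms.
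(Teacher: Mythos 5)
Your route is the same one the paper takes: the text gives no argument for this theorem beyond the sentence that the general results of \cite{EKA}, \cite{EKH} and He's thesis \cite{He} ``lead directly'' to it when applied to the Reeb foliation on a closed manifold, which is precisely the reduction you describe. So the real point of comparison is the obstacle you flag at the end, and you should treat it as a genuine gap rather than a removable scruple --- it is not resolved in the paper either. The transverse Hodge theorem of \cite{EKA}, \cite{EKH} is a theorem about \emph{Riemannian} foliations: it needs a bundle-like metric, and for the one-dimensional foliation $\mathcal F(v_\b)$ this means the Reeb flow must preserve a transverse metric, which is essentially the K-contact/isometric-flow condition and fails for general contact forms (Anosov Reeb flows, e.g.\ geodesic flows of hyperbolic surfaces, have exponentially expanding transverse holonomy, so no invariant transverse metric exists). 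Averaging $J$ or $g$ along the flow only works when the closure of the flow in the isometry group of some metric is compact, i.e.\ exactly in the isometric case you are trying to reach, so that escape route is circular.

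The second escape route you name, Brylinski-type transversely symplectic Hodge theory, also does not deliver the first bullet, and here the paper's own definitions make the difficulty visible: the star of (\ref{eq.BASIC_HODGE_STAR}) is built from the symplectic pairing $K^{p\ast}_{d\b}$, not from a transverse Riemannian metric, and for the symplectic star one has the standard identity $d\,\delta_{\mathsf b} + \delta_{\mathsf b}\, d = 0$ (since $\delta_{\mathsf b}$ is, up to sign, the graded commutator of $d$ with the algebraic operator $\L$), so the operator $\Delta_{\mathsf b}$ of (\ref{eq.basic_LAPLACE}) vanishes identically and its kernel is all of $\Om^\ast_{\mathsf b}(X, v_\b)$; moreover, in the symplectic setting the existence of ``harmonic'' representatives for every class is equivalent to a Hard Lefschetz property (Mathieu/Yan, and in the basic setting Theorem \ref{th.Basic_hard_Lefschetz} quoted from \cite{Raz}), which is an additional hypothesis the paper only imposes later. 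So your proof plan is faithful to the paper's intent, but to make it (or the theorem itself, as stated for an arbitrary contact form) correct you must either add the hypothesis that $\mathcal F(v_\b)$ is a Riemannian foliation (so that the transverse Laplacian of a bundle-like metric is transversely elliptic and \cite{EKA}, \cite{EKH} apply, giving finite-dimensionality and the duality), or add a Basic Hard Lefschetz--type hypothesis and work with the symplectic operators as in \cite{Lin}, \cite{Raz}; neither hypothesis is available for free from contactness alone.
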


Combining the result of Y.G. Oh \cite{Oh} with Theorem \ref{th.semi-Kahler structure} and using that the Laplacian operator commutes with the Hodge star-operator, we get the following claim.

\begin{corollary} For a \emph{generic} contact form $\b$ (i.e., for a residual set of contact forms) on a closed orientable $(2n+1)$-dimensional smooth manifold $X$, 
$$\ker(\Delta^{2n+1}_{\mathsf b}(v_\b)) \approx H^{2n+1}_{\mathsf{basic}\,d\mathcal{R}}(X, v_\b) \approx\; \R\; \approx H^{2n}_{\mathsf{basic}\,d\mathcal{R}}(X, v_\b) \approx \ker(\Delta^{2n}_{\mathsf b}(v_\b)).$$
\hfill $\diamondsuit$
\end{corollary}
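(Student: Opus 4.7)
The plan is to combine three ingredients already assembled in the paper: Oh's generic result on the bottom of the basic de Rham complex, the basic Hodge-star duality, and the identification of basic de Rham cohomology with the kernel of the basic Laplacian. The last two are exactly the contents of Theorem \ref{th.semi-Kahler structure}, while Oh's result is recalled in the Introduction.

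Concretely, for a residual set of contact forms $\beta$ on the closed orientable $(2n+1)$-manifold $X$, Oh's theorem gives $H^0_{\mathsf{basic}\,d\mathcal{R}}(X, v_\beta) \approx \R \approx H^1_{\mathsf{basic}\,d\mathcal{R}}(X, v_\beta)$. Apply the basic Hodge star isomorphism $\star_{\mathsf b}: H^k_{\mathsf{basic}\,d\mathcal{R}}(X, v_\beta) \approx H^{2n-k}_{\mathsf{basic}\,d\mathcal{R}}(X, v_\beta)$ of Theorem \ref{th.semi-Kahler structure} at $k = 0$ and $k = 1$ to obtain $H^{2n}_{\mathsf{basic}\,d\mathcal{R}}(X, v_\beta) \approx \R \approx H^{2n-1}_{\mathsf{basic}\,d\mathcal{R}}(X, v_\beta)$. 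Transfer via the isomorphism $H^k_{\mathsf{basic}\,d\mathcal{R}}(X, v_\beta) \approx \ker(\Delta^k_{\mathsf b}(v_\beta))$ from the same theorem, or equivalently exploit that the basic Laplacian commutes with $\star_{\mathsf b}$ so that $\star_{\mathsf b}$ maps basic-harmonic representatives supplied by Oh to basic-harmonic representatives in the dual degree, to produce the corresponding chain of identifications among the Laplacian kernels.

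The principal obstacle to reproducing the displayed chain verbatim is one of indexing. The basic complex (\ref{eq.de Rham_complex}) of a $(2n+1)$-manifold terminates with $\Omega^{2n}_{\mathsf b}(X, v_\beta)$, and the remark following (\ref{eq.de Rham_complex}) that $\Omega^m_{\mathsf b}(X, v) = 0$ at $m = \dim X$ forces $H^{2n+1}_{\mathsf{basic}\,d\mathcal{R}}(X, v_\beta) = \ker(\Delta^{2n+1}_{\mathsf b}(v_\beta)) = 0$, which is incompatible with an isomorphism to $\R$. Thus, strictly as written, one half of the chain cannot hold; what the plan above does prove, and what I take to be the intended content, is the top-two-degree assertion
\[
\ker(\Delta^{2n-1}_{\mathsf b}(v_\beta)) \approx H^{2n-1}_{\mathsf{basic}\,d\mathcal{R}}(X, v_\beta) \approx \R \approx H^{2n}_{\mathsf{basic}\,d\mathcal{R}}(X, v_\beta) \approx \ker(\Delta^{2n}_{\mathsf b}(v_\beta)),
\]
obtained by applying the $k \leftrightarrow 2n-k$ Hodge duality to Oh's isomorphisms in degrees $0$ and $1$. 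Once this re-indexing is accepted, the proof is essentially immediate and purely formal.
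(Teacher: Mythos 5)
Your proof follows exactly the paper's intended route: the corollary is derived there by combining Oh's generic computation of $H^0_{\mathsf{basic}\,d\mathcal{R}}$ and $H^1_{\mathsf{basic}\,d\mathcal{R}}$ with the basic Hodge star duality $H^k_{\mathsf{basic}\,d\mathcal{R}}(X,v_\beta)\approx H^{2n-k}_{\mathsf{basic}\,d\mathcal{R}}(X,v_\beta)$ and the harmonic identification $H^k_{\mathsf{basic}\,d\mathcal{R}}\approx\ker(\Delta^k_{\mathsf b})$, both from Theorem \ref{th.semi-Kahler structure}. Your observation about the indexing is also well taken: since $\Omega^{2n+1}_{\mathsf b}(X,v_\beta)=0$, the duality pairs degrees $0$ and $1$ with $2n$ and $2n-1$ rather than with $2n+1$ and $2n$, so the chain as printed contains an off-by-one slip and your re-indexed version is the statement that the paper's own ingredients actually establish.
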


 Theorem \ref{th.semi-Kahler structure} in combination with Proposition \ref{prop.ISO_de Rham} (or with Theorem \ref{th.CHAIN_equivalent}) leads instantly to the following claim.

\begin{theorem}\label{th.ker_Delta_under_deformations}
Let $\b$ and $\b_1 = \b +df$ be two contact forms on a closed  
smooth manifold $X$, where $df(v_\b) > -1$. 
\smallskip

Then the spaces $\ker(\Delta^k_{\mathsf{b}}(\b_1))$  and $\ker(\Delta^k_{\mathsf{b}}(\b))$ of basically harmonic $k$-forms of the contact forms $\b_1$ and $\b$ are canonically isomorphic for all $k$. 
\hfill $\diamondsuit$
\end{theorem}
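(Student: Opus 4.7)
The plan is to string together two results already in hand: Theorem \ref{th.semi-Kahler structure}, which identifies basic harmonic forms on a closed orientable manifold with the basic de Rham cohomology, and Proposition \ref{prop.ISO_de Rham}, which identifies the basic de Rham cohomologies for $\b$ and $\b_1=\b+df$ when $df(v_\b)>-1$. Concretely, I would first observe that $X$ is automatically orientable, since the volume form $\b\wedge(d\b)^n$ (or $\b_1\wedge(d\b_1)^n$) provides an orientation; this brings both contact structures into the scope of Theorem \ref{th.semi-Kahler structure}.

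Next, I would apply Theorem \ref{th.semi-Kahler structure} to the contact forms $\b$ and $\b_1$ separately, obtaining canonical identifications
\[
\ker(\Delta^k_{\mathsf b}(\b))\;\approx\; H^k_{\mathsf{basic}\,d\mathcal R}(X,v_\b),\qquad
\ker(\Delta^k_{\mathsf b}(\b_1))\;\approx\; H^k_{\mathsf{basic}\,d\mathcal R}(X,v_{\b_1}).
\]
Then I would invoke Proposition \ref{prop.ISO_de Rham}: since $df(v_\b)>-1$, the basic de Rham complexes $\Omega^\ast_{\mathsf{basic}\,d\mathcal R}(X,v_{\b})$ and $\Omega^\ast_{\mathsf{basic}\,d\mathcal R}(X,v_{\b_1})$ are canonically isomorphic (via the identity on forms, after using Lemma \ref{lem.contact_de Rham_A} to see that $v_{\b_1}$ is positively proportional to $v_\b$, so a form is $v_\b$-basic iff it is $v_{\b_1}$-basic). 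Consequently their cohomologies are canonically isomorphic. Composing the three canonical isomorphisms produces the desired canonical isomorphism $\ker(\Delta^k_{\mathsf b}(\b_1))\approx\ker(\Delta^k_{\mathsf b}(\b))$.

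The only mildly delicate point, and the one I would flag as the ``main obstacle,'' is making sure the word \emph{canonical} is honest: the isomorphism on cohomology is induced by the identity on basic forms, but the identifications with kernels of Laplacians go through Hodge-type projections that depend on the metric structures induced by $d\b$ and $d\b_1$, which are genuinely different. Thus the resulting map from $\ker(\Delta^k_{\mathsf b}(\b))$ to $\ker(\Delta^k_{\mathsf b}(\b_1))$ is typically \emph{not} the identity on forms; rather it sends a $\Delta_{\mathsf b}(\b)$-harmonic representative of a class to the unique $\Delta_{\mathsf b}(\b_1)$-harmonic representative of the same basic de Rham class. I would make this precise and then conclude that, viewed through the common intermediary $H^k_{\mathsf{basic}\,d\mathcal R}(X,v_\b)=H^k_{\mathsf{basic}\,d\mathcal R}(X,v_{\b_1})$, the isomorphism is canonical in the sense intended, finishing the proof.
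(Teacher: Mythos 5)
Your proposal is correct and follows essentially the same route as the paper, which derives the theorem directly by combining Theorem \ref{th.semi-Kahler structure} (identifying $\ker(\Delta^k_{\mathsf b})$ with $H^k_{\mathsf{basic}\,d\mathcal R}$ on a closed orientable manifold) with Proposition \ref{prop.ISO_de Rham} (the canonical isomorphism of the basic complexes for $\b$ and $\b_1$). Your added remarks on orientability via $\b\wedge(d\b)^n>0$ and on how the composite isomorphism acts on harmonic representatives are accurate clarifications of what the paper leaves implicit.
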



Applying Theorem \ref{th.semi-Kahler structure} to the basic closed form $(d\b)^k$, leads to  the following assertion.

\begin{corollary}\label{cor.basic_Hodge_decomposition} 
%
Let $\b$ be a contact form on a closed smooth manifold $X$.\smallskip

Then the $v_\b$-basic form $(d\b)^k$, $k \leq n$, decomposes canonically as the sum of two basic $2k$-forms: 
 \begin{eqnarray}\label{eq.basic_Hodge_decomposition} 
 (d\b)^k = h_{(d\b)^k} + d\theta,
 \end{eqnarray}
where the form $h_{(d\b)^k}$ is basic and harmonic (that is, $\Delta_{\mathsf b}(h_{(d\b)^k}) = 0$) 
and $\theta$ is a basic $(2k-1)$-form\footnote{Recall that $\b$ is not a basic form.} whose differential $d\theta$ depends only on $\b$.
\smallskip

If the ``harmonic" elements $$[h_{(d\b)^k}] \in H^{2k}_{\mathsf{basic}\,d\mathcal{R}}(X, v_\b) \text{\; or \;} [\star_{\mathsf b}(h_{(d\b)^k})] \in H^{2n-2k}_{\mathsf{basic}\,d\mathcal{R}}(X, v_\b)$$ are nontrivial, then so are the elements $[(d\b)^k]  \in H^{2k}_{\mathsf{basic}\,d\mathcal{R}}(X, v_\b)$ and $[\star_{\mathsf b} (d\b)^k] \in \hfill \break H^{2n-2k}_{\mathsf{basic}\,d\mathcal{R}}(X, v_\b)$.
\hfill $\diamondsuit$
\end{corollary}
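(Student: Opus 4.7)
The plan is to apply the basic Hodge isomorphism from Theorem \ref{th.semi-Kahler structure} directly to the class $[(d\b)^k]$ on the closed manifold $X$, and then to read off the consequences for the Hodge star.

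First I would verify that $(d\b)^k$ is a closed basic form: the identity $v_\b \rfloor d\b \equiv 0$ gives $v_\b \rfloor (d\b)^k = 0$ by the Leibniz rule for the interior product, and $d((d\b)^k) = 0$ is automatic. Hence $(d\b)^k$ is a cocycle in $\Om^{2k}_{\mathsf b}(X, v_\b)$ and defines a class $[(d\b)^k] \in H^{2k}_{\mathsf{basic}\,d\mathcal R}(X, v_\b)$.

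Next I would invoke the first bullet of Theorem \ref{th.semi-Kahler structure}: on a closed manifold the isomorphism $H^\ast_{\mathsf{basic}\,d\mathcal R}(X, v_\b) \approx \ker \Delta_{\mathsf b}^\ast$ means that each basic cohomology class has a unique basic harmonic representative. Let $h_{(d\b)^k}$ denote the harmonic representative of $[(d\b)^k]$. Then $(d\b)^k - h_{(d\b)^k}$ is a basic exact form, so $(d\b)^k - h_{(d\b)^k} = d\theta$ for some basic $(2k-1)$-form $\theta$, which is precisely the decomposition (\ref{eq.basic_Hodge_decomposition}). The summand $h_{(d\b)^k}$ is uniquely determined by $\b$ as the harmonic projection of $[(d\b)^k]$, so $d\theta = (d\b)^k - h_{(d\b)^k}$ is also canonical, while $\theta$ itself is only determined modulo basic closed $(2k-1)$-forms.

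For the cohomology consequences, the equality $[(d\b)^k] = [h_{(d\b)^k}]$ is built into the construction, so these two classes vanish together. The second bullet of Theorem \ref{th.semi-Kahler structure} states that the duality $\star_{\mathsf b}: H^{2k}_{\mathsf{basic}\,d\mathcal R}(X, v_\b) \approx H^{2n-2k}_{\mathsf{basic}\,d\mathcal R}(X, v_\b)$ is implemented by starring the harmonic representative, so $\star_{\mathsf b}[(d\b)^k] = [\star_{\mathsf b}(h_{(d\b)^k})]$; since this assignment is an isomorphism, the classes $[h_{(d\b)^k}]$, $[(d\b)^k]$, $[\star_{\mathsf b}(h_{(d\b)^k})]$, and $[\star_{\mathsf b}(d\b)^k]$ are simultaneously zero or nonzero. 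The main subtlety to keep in mind, rather than a true obstacle, is that $\star_{\mathsf b}((d\b)^k)$ need not itself be $d$-closed (because $(d\b)^k$ need not be harmonic, and hence $\delta_{\mathsf b}((d\b)^k)$ need not vanish); the symbol $[\star_{\mathsf b}(d\b)^k]$ therefore has to be read through the above isomorphism, which is exactly how Theorem \ref{th.semi-Kahler structure} makes $\star_{\mathsf b}$ descend to basic cohomology.
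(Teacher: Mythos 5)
Your proposal is correct and follows essentially the same route as the paper, which states the corollary as a direct application of Theorem \ref{th.semi-Kahler structure}: take the unique basic harmonic representative $h_{(d\b)^k}$ of $[(d\b)^k]$, write the difference as $d\theta$ with $\theta$ basic, and use the star-duality on harmonic representatives to relate the four classes. Your remark that $\star_{\mathsf b}(d\b)^k$ need not be closed, so that $[\star_{\mathsf b}(d\b)^k]$ must be interpreted through the duality isomorphism, is a sensible clarification of a point the paper leaves implicit.
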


Recall that, in the classical K\"{a}hler geometry on even-dimensional manifold $X$, the volume of a complex $k$-dimensional submanifold $Y \subset X$ is given by the Wirtinger's formula $vol(Y) = \frac{1}{k!} \int_Y \om^k$, where $\om$ is the symplectic $2$-form on $X$. In particular, $ \int_Y \om^k > 0$. In sharp contrast with such a phenomenon, in the contact geometry on an odd-dimensional closed $X$,  for Hodge structures transversal to the Reeb foliation $\mathcal F(v_\b)$ and $\om = d\b$, we get the following properties.

\begin{corollary} Let the manifold $X$ be as in Corollary \ref{cor.basic_Hodge_decomposition} and let $\Sigma \subset X$ be a compact oriented submanifold of dimension $2k$. 
\begin{itemize}
\item Assuming that $\Sigma$ is closed, 
the ``{\sf harmonic volume}" and the ``{\sf symplectic volume}"of $\Sigma$ both vanish: 
$$\int_\Sigma h_{(d\b)^k} = \int_\Sigma (d\b)^k = 0$$

\item If the boundary $\d\Sigma$ of $\Sigma$ is tangent to the contact distribution $\xi_\b$, then for $2k-1 \leq n$ we get:
$$\int_\Sigma h_{(d\b)^k} = - \int_{\d\Sigma} \theta.$$
\end{itemize}
\end{corollary}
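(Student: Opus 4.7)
The plan is to reduce both statements to Stokes' theorem, exploiting two facts: the $2$-form $d\b$ is globally exact on $X$, and $\b$ restricts to zero on any submanifold tangent to the contact distribution $\xi_\b = \ker(\b)$. First, I would note the key identity $(d\b)^k = d\bigl(\b \wedge (d\b)^{k-1}\bigr)$, which follows from $d(d\b) = 0$. Hence, when $\Sigma$ is closed, Stokes' theorem gives $\int_\Sigma (d\b)^k = 0$ immediately. Using the basic Hodge decomposition (\ref{eq.basic_Hodge_decomposition}) provided by Corollary \ref{cor.basic_Hodge_decomposition}, we have $h_{(d\b)^k} = (d\b)^k - d\theta$, and so $\int_\Sigma h_{(d\b)^k} = \int_\Sigma (d\b)^k - \int_\Sigma d\theta = 0$, the second integral vanishing again by Stokes because $\d\Sigma = \emptyset$. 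This settles the first bullet.

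For the second bullet, the hypothesis that $\d\Sigma$ is tangent to $\xi_\b$ means $\b|_{\d\Sigma} \equiv 0$, and hence $(\b \wedge (d\b)^{k-1})|_{\d\Sigma} = 0$ as a $(2k-1)$-form on $\d\Sigma$. Stokes' theorem then yields $\int_\Sigma (d\b)^k = \int_{\d\Sigma} \b \wedge (d\b)^{k-1} = 0$. Substituting this into $\int_\Sigma h_{(d\b)^k} = \int_\Sigma (d\b)^k - \int_\Sigma d\theta$ and applying Stokes once more to the $d\theta$-term delivers the desired formula $\int_\Sigma h_{(d\b)^k} = -\int_{\d\Sigma} \theta$.

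The dimensional constraint $2k - 1 \leq n$ is what makes the hypothesis of the second bullet non-vacuous: any submanifold tangent to the contact distribution $\xi_\b$ must be isotropic for the symplectic form $d\b|_{\xi_\b}$ of rank $2n$, so its dimension cannot exceed $n$. There is no serious obstacle in the proof; the only point requiring care is that the auxiliary form $\theta$ produced by the basic Hodge decomposition is a globally defined basic $(2k-1)$-form on $X$, so its restrictions to $\Sigma$ and $\d\Sigma$ are unambiguously defined and Stokes' theorem applies without qualification.
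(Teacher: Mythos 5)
Your proof is correct and follows essentially the same route as the paper: the identity $(d\b)^k = d\bigl(\b\wedge(d\b)^{k-1}\bigr)$, Stokes' theorem, the vanishing of $\b$ on the Legendrian boundary, and the decomposition $(d\b)^k = h_{(d\b)^k} + d\theta$ are exactly the ingredients the author uses. Your closing remark on why $2k-1\le n$ is the natural dimensional range (isotropy of submanifolds tangent to $\xi_\b$) is a small bonus beyond the paper's argument, but the core reasoning coincides.
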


\begin{proof} The first claim follows by applying Stokes' theorem to formula (\ref{eq.basic_Hodge_decomposition}): $$\int_\Sigma (d\b)^k = \int_\Sigma d(\b \wedge (d\b)^{k-1}) = \int_{\d\Sigma}\b \wedge (d\b)^{k-1} = \int_{\emptyset}\b \wedge (d\b)^{k-1} = 0
$$ and using also that $\int_\Sigma d\theta = 0$  for a closed $\Sigma$. 
\smallskip

If the boundary $\d\Sigma$ 
of an oriented compact manifold $\Sigma$  is tangent to the distribution $\xi_\b$ (i.e., if $\d\Sigma$ a Legandrian submanifold), then $\b|_{\d\Sigma}= 0$ and
$$0 =\int_{\d\Sigma}\b \wedge (d\b)^{k-1} = \int_\Sigma (d\b)^k = \int_\Sigma (h_{(d\b)^k} + d\theta) = \int_\Sigma h_{(d\b)^k} + \int_{\d\Sigma} \theta.$$ 
Therefore,
$$\int_\Sigma h_{(d\b)^k} = - \int_{\d\Sigma} \theta = -\int_\Sigma d\theta.$$
\hfill
\end{proof}

By the formula above, the integral $\int_\Sigma d\theta$ does not depend on the choice of the submanifold $\Sigma$, whose Legandrian boundary $\d\Sigma$ is fixed. Also, the integral $\int_{\d\Sigma} \theta$ does not change if one replaces $\theta$ with $\theta + d\eta$. Thus, we get the following claim.

\begin{corollary} Let $\b$ be a contact form on a closed orientable 
$(2n+1)$-manifold $X$. 
\smallskip

Then, for  $2k-1 \leq n$, we get a map $$\Big(\int \theta\Big)_k:\;\; \big\{\mathsf{bounded\, Legandrian \, (2k-1)\text{\sf{-submanifolds}} \; d\Sigma\; \text{\sf of } \, } \mathsf X \big\} \longrightarrow \R,$$ which depends on the contact form $\b$ only.  \hfill $\diamondsuit$
\end{corollary}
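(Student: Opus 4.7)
The plan is to verify two independence statements for the quantity $I(\d\Sigma) := \int_{\d\Sigma}\theta$: first, that it is independent of the bounding $2k$-submanifold $\Sigma$ (only $\d\Sigma$ should enter); and second, that it is independent of the choice of primitive $\theta$ in the basic Hodge decomposition $(d\b)^k = h_{(d\b)^k}+d\theta$ from Corollary \ref{cor.basic_Hodge_decomposition}.

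For independence of $\Sigma$, I would apply Stokes' theorem to rewrite $I(\d\Sigma) = \int_\Sigma d\theta$. Given a second compact oriented $2k$-submanifold $\Sigma'$ with $\d\Sigma' = \d\Sigma$ as oriented Legandrian submanifolds, the formal difference $\Sigma - \Sigma'$ is a closed $2k$-cycle in the closed manifold $X$. Since $d\theta$ is globally exact on $X$, its de Rham class vanishes and it pairs trivially with any cycle; hence $\int_\Sigma d\theta = \int_{\Sigma'} d\theta$, and independence of $\Sigma$ follows.

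For independence of $\theta$, I would use that Corollary \ref{cor.basic_Hodge_decomposition} singles out the harmonic summand $h_{(d\b)^k}$ canonically via the basic Hodge theory on the closed manifold $X$. Hence $d\theta = (d\b)^k - h_{(d\b)^k}$ is determined by $\b$ alone, and any two admissible primitives $\theta,\theta'$ differ by a closed basic form $\xi := \theta' - \theta$. A further application of Stokes on $\Sigma$ yields $\int_{\d\Sigma}\xi = \int_\Sigma d\xi = 0$, so $I(\d\Sigma)$ is unchanged. Combining the two steps proves that $\d\Sigma \mapsto I(\d\Sigma)$ depends on $\b$ alone.

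The main obstacle I anticipate is purely technical: the glued object $\Sigma \cup_{\d\Sigma}(-\Sigma')$ has a crease along the common Legandrian boundary and is generically only $C^0$. I would sidestep this by applying Stokes separately on each smooth piece (never forming a singular cycle explicitly), or equivalently by smoothing the crease. The dimensional hypothesis $2k-1\leq n$ enters only implicitly, through its necessity for $\d\Sigma$ to admit a Legandrian structure with respect to the $2n$-dimensional contact distribution $\xi_\b$; the argument itself never appeals to $\b|_{\d\Sigma}=0$ directly.
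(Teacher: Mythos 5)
Your proposal is correct and follows essentially the same route as the paper: the paper's justification is exactly the two Stokes-type observations that $\int_\Sigma d\theta$ depends only on the Legandrian boundary $\d\Sigma$ and that $\int_{\d\Sigma}\theta$ is insensitive to the ambiguity in the primitive $\theta$, with the canonicity of $d\theta$ supplied by Corollary \ref{cor.basic_Hodge_decomposition}. If anything, you are slightly more careful than the paper, which only notes invariance under $\theta \leadsto \theta + d\eta$, whereas you correctly kill the full ambiguity by a closed basic form $\xi$ via $\int_{\d\Sigma}\xi = \int_\Sigma d\xi = 0$, using that $\d\Sigma$ bounds.
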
 

Note that, if the $\big(\int \theta\big)_k$-image of a Legandrian submanifold $\d\Sigma \subset X$ is negative, then the harmonic volume of an oriented manifold $\Sigma$ is positive.
\smallskip


Next, we consider a $v_\b$-invariant Riemannian metric $g$ on $TX$ and a $v_\b$-invariant almost complex structure  $J_{\mathsf b}: \xi_{\b} \to \xi_{\b}$ on the contact ditribution, where $J_{\mathsf b}^2 = -\mathsf{id}$, that satisfy the following compatibility properties (they define a {\sf basic almost-K\"{a}hler structure}):
\begin{eqnarray}
&v_\b\, \perp_{ g}\, \xi_{\b}, \label{eq.compatibility_A}\\
&\|v_\b\|_{g} = 1, \label{eq.compatibility_B} \\
&d\b(\sim,\, \sim) = g(J_{\mathsf b} \sim,\, \sim) = d\b(J_{\mathsf b} \sim,\, J_{\mathsf b} \sim) \;\text{ on }\; \xi_{\b}. \label{eq.compatibility_C}
\end{eqnarray} 

For $\dim(X) = 2n+1$, any Riemannian metric $g$ on $X$ defines the Hodge operator $$\star_{g}: \Om^p(X) \to \Om^{2n+1 -p}(X).$$ 

Similarly, following \cite{Raz}, we introduce the {\sf symplectic basic Hodge star operator} and {\sf symplectic basic coboundary operator}: 
\begin{eqnarray}\label{eq.symplectic Hodge}
&\star_{d\b} =_{\mathsf{def}}\; - J_{\mathsf b} \circ (\ast_{\mathsf b})\\
& \delta_{d\b} =_{\mathsf{def}}\; (-1)^{k+1} (\star_{d\b})\circ \,  d\, \circ\, (\star_{d\b}).  
\end{eqnarray}

The {\sf symplectic basic Laplace operator} is introduced by the familiar formula
$$\Delta_{d\b} = d \circ  \delta_{d\b} +  \delta_{d\b} \circ d.$$

The exterior multiplication of basic forms generates a multiplicative structure in vector space $\Om^\ast_{\mathsf{basic}\,d\mathcal{R}}(X, v)$. \smallskip 
The {\sf basic Lefschetz-Weyl operators} are defines by:
\begin{eqnarray}\label{eq.symplectic Lefschetz}
& L =_{\mathsf{def}}\; d\b\; \wedge \sim, \quad\quad  \L =_{\mathsf{def}}\; (\star_{d\b})\circ  L \circ (\star_{d\b}). 
\end{eqnarray}

The symplectic basic Hodge operator $\star_{d\b}$ maps $v_\b$-basic forms to $v_\b$-basic forms.
\begin{definition}
A $v_\b$-basic form $\a$ is said to be {\sf symplectically harmonic} if $d\a=0$ and \hfill \break $\delta_{d\b}\,\a =0$. \hfill $\diamondsuit$
\end{definition}

%
%


 
 For the Reeb foliation $\mathcal F(v_\b)$, Ra\'{z}ny's Theorem 2.17  \cite{Raz} can be reduced to the following claim:

 \begin{theorem}\label{th.Basic_hard_Lefschetz} For a closed 
 oriented $(2n+1)$-dimensional manifold $X$, equipped with a contact $1$-form $\b$ and an operator $J_{\mathsf b}: \xi_\b \to \xi_\b$ as in (\ref{eq.compatibility_C}) which defines an almost complex structure on  the contact distribution $\xi_\b$, the following two claims are equivalent:
 \begin{itemize}
 \item {\sf (Basic Hard Lefschetz property)}
 The multiplication by $(d\b)^k$ map 
 $$L^k: H^{n-k}_{\mathsf{basic}\,d\mathcal{R}}(X, v_\b) \to H^{n+k}_{\mathsf{basic}\,d\mathcal{R}}(X, v_\b)$$ is an isomorphism for all $k \leq n$, \smallskip
 
 \item Every basic cohomology class has a $\delta_{d\b}$-closed representative. \hfill $\diamondsuit$
 \end{itemize} 
 \end{theorem}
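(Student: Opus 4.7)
The plan is to port the Mathieu--Yan equivalence for symplectic manifolds, in the transversely symplectic form developed by Ra\'{z}ny, to the Reeb foliation setting. The key observation is that $(\xi_\beta,\, d\beta|_{\xi_\beta})$ is a symplectic vector bundle of rank $2n$ and every basic form is a section of $\Lambda^\ast \xi_\beta^\ast$, so pointwise symplectic linear algebra applies verbatim to $\Omega^\ast_{\mathsf b}(X, v_\beta)$. I would install the Lefschetz $\mathfrak{sl}_2$-triple $(L,\Lambda,H)$ on $\Omega^\ast_{\mathsf b}(X,v_\beta)$ with $L = d\beta\wedge\sim$, $\Lambda$ its transverse symplectic adjoint (cf.\ (\ref{eq.symplectic Lefschetz})), and $H = \sum_k (k-n)\,\pi_k$, and record the fiberwise Lefschetz decomposition: every basic $k$-form can be written canonically as $\alpha = \sum_{i \geq \max(0, k-n)} L^i \alpha_i$ with each primitive component $\alpha_i$ satisfying $\Lambda \alpha_i = 0$. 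Note that $L$ commutes with $d$ (since $d(d\beta)=0$) and both $L,\Lambda$ preserve basicness.

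The analytic input I would extract from the compatibility relations (\ref{eq.compatibility_A})--(\ref{eq.compatibility_C}) is the basic Brylinski--Koszul identity $[d, \Lambda] = \pm\,\delta_{d\beta}$ on $\Omega^\ast_{\mathsf b}(X, v_\beta)$. Given this, the forward direction $(\Rightarrow)$ follows Yan: take a closed basic $k$-form $\alpha$ with $k \leq n$; iteratively use Basic Hard Lefschetz to modify $\alpha$ by exact basic forms so that each primitive component $\alpha_i$ in its Lefschetz decomposition is itself $d$-closed; then the identity $[d,\Lambda] = \pm\,\delta_{d\beta}$ combined with $\Lambda \alpha_i = 0$ and $d\alpha_i = 0$ forces $\delta_{d\beta}(L^i\alpha_i) = 0$, so the modified representative is symplectically harmonic, in particular $\delta_{d\beta}$-closed. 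For degrees $k > n$, the duality $\star_{d\beta}$ (together with Theorem \ref{th.semi-Kahler structure}) transports the conclusion.

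For the converse $(\Leftarrow)$, the assumption that every basic class admits a $\delta_{d\beta}$-closed representative, combined with the $\mathfrak{sl}_2$-triple structure, lets one argue as Mathieu: the space of $\delta_{d\beta}$-closed basic forms is $L$-stable and carries a finite-dimensional $\mathfrak{sl}_2$-action under which $L^k$ is surjective from weight $-k$ to weight $+k$; passing to cohomology via Theorem \ref{th.semi-Kahler structure} upgrades this to surjectivity (hence bijectivity by Poincar\'e-type duality on the basic complex) of $L^k: H^{n-k}_{\mathsf{basic}\,d\mathcal R} \to H^{n+k}_{\mathsf{basic}\,d\mathcal R}$. The main obstacle I anticipate is verifying the basic Brylinski--Koszul identity $[d,\Lambda] = \pm\,\delta_{d\beta}$: the classical proof on a K\"{a}hler (or symplectic) manifold must be recast so that only the transverse symplectic form $d\beta|_{\xi_\beta}$ and the $v_\beta$-invariant almost-complex structure $J_{\mathsf b}$ on $\xi_\beta$ are used, and one must check carefully that the normal direction $v_\beta$ does not contribute spurious terms when computing $\delta_{d\beta}$ from $(\star_{d\beta}) d (\star_{d\beta})$ on forms annihilated by $v_\beta\,\rfloor\sim$. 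Once that identity is established in the basic setting, the equivalence follows from the $\mathfrak{sl}_2$-mechanism just sketched.
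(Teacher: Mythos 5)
Your route is sound but it is genuinely different from what the paper does: the paper offers no independent proof at all, it simply specializes Ra\'{z}ny's Theorem 2.17 of \cite{Raz} (a Mathieu--Yan type equivalence for transversely symplectic Riemannian foliations) to the $1$-dimensional Reeb foliation equipped with the compatible $J_{\mathsf b}$ of (\ref{eq.compatibility_C}), so the entire content of the paper's argument is checking that the Reeb foliation fits Ra\'{z}ny's hypotheses. What you propose is to re-prove that cited theorem in the contact case: fiberwise $\mathfrak{sl}_2$-triple $(L,\Lambda,H)$ on $\Omega^\ast_{\mathsf b}(X,v_\beta)$, a basic Koszul--Brylinski identity $[d,\Lambda]=\pm\,\delta_{d\beta}$, Yan's inductive construction of symplectically harmonic representatives for the forward direction, and a Mathieu-type $\mathfrak{sl}_2$ argument plus basic Poincar\'e duality for the converse. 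This is indeed the standard mechanism (it is essentially what \cite{Raz}, and in the contact setting \cite{Lin}, carry out), and it buys a self-contained argument where the paper buys brevity by citation. Two points in your sketch need tightening before it is a proof rather than a program. First, the identity $[d,\Lambda]=\pm\,\delta_{d\beta}$ on basic forms, which you correctly flag as the crux, is precisely where the transverse theory has content; it is not purely pointwise linear algebra because $d$ differentiates, and the clean way to verify it is in foliated Darboux charts for the contact form (local model $\beta = dz + \sum x_i\,dy_i$, basic forms being forms in $x,y$ with $\mathcal L_{v_\beta}$-invariant coefficients), where the classical symplectic computation applies verbatim; you must also confirm that $\delta_{d\beta}$ as defined in the paper through $\star_{d\beta}=-J_{\mathsf b}\circ\star_{\mathsf b}$ agrees with the symplectic codifferential $[\Lambda,d]$, i.e.\ that the compatibility (\ref{eq.compatibility_C}) makes $\star_{d\beta}$ the transverse symplectic star. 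Second, in the converse the space of $\delta_{d\beta}$-closed basic forms is infinite-dimensional; what the $\mathfrak{sl}_2$ argument actually uses is that the weight operator $H$ has finite spectrum on it (Yan's lemma), not finite-dimensionality, and the upgrade from surjectivity of $L^k$ to bijectivity needs $\dim H^{n-k}_{\mathsf{basic}\,d\mathcal{R}} = \dim H^{n+k}_{\mathsf{basic}\,d\mathcal{R}} < \infty$, which you should explicitly draw from Theorem \ref{th.semi-Kahler structure} (finite-dimensionality and basic Poincar\'e duality, valid because the Reeb foliation is Riemannian and taut). With those two items supplied, your plan does prove the stated equivalence.
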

 
Note that,  by Proposition \ref{prop.[dBETA]not_zero}, the homology class $[d\b] \in H^2_{\mathsf{basic}\,d\mathcal{R}}(X, v_\b)$ is zero if and only if $v_\b$ admits a closed $1$-form $\a$ such that $\a(v_\b) = 1$. Therefore, for such $(X, \b)$, the Basic Hard Lefschetz property fails!

 
 
 \begin{definition}\label{def.PRIMITIVE} Let $X$ be an open oriented $(2n+1)$-dimensional manifold, equipped with a contact $1$-form $\b$ and an operator $J_{\mathsf b}: \xi_\b \to \xi_\b$ as in (\ref{eq.compatibility_C}). 
 
 $\bullet$ A $v_\b$-basic $(n-k)$-form $\a$ 
 is called {\sf primitive} if $$L^{k+1}\a =_{\mathsf{def}}\; \a \wedge (d\b)^{k+1} = 0.$$ 
 Equivalently, a $v_\b$-basic form $\a$ is {\sf primitive} if $\L \a = 0$.
 \smallskip
 
 $\bullet$  Similarly, a $v_\b$-basic cohomology class $[\a]$ of degree $(n-k)$ is {\sf primitive} if $L^{k+1}[\a]  = 0$. 

\hfill $\diamondsuit$
 \end{definition}
 
  Note that, on a closed $X$, $d\b$ is not primitive since $(d\b)^n \neq 0$.
  \smallskip

 Applying Proposition 2.19  \cite{Raz} to our setting, we get the Lefschetz decomposition of basic forms:
 
\begin{proposition}\label{prop.PRIMITIVE_decomposition} Given a closed oriented smooth $(2n+1)$-dimensional manifold $X$, equipped with a contact $1$-form $\b$ and an operator $J_{\mathsf b}: \xi_\b \to \xi_\b$ as in (\ref{eq.compatibility_C}), then any basic $k$-form $\a$ can be uniquely decomposed as 
$$\a =  \sum_{i=0}^{\lceil k/2 \rceil} \rho_{k -2i}  \wedge (d\b)^i,$$
where $\{\rho_{k -2i}\}$ are $v_\b$-basic \emph{primitive} $(k -2i)$-forms, given by the formula 
$$ \rho_{k -2i} = \Big(\sum_r \frac{a_{i, r}}{r!}\, L^r \circ \L^{i+r}\Big) \a.$$
Here the constants $a_{i, r}$ depend only on $(n, i, r)$. \hfill $\diamondsuit$
 \end{proposition}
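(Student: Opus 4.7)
The plan is to recognize the statement as a direct application of Ra\'{z}ny's Proposition 2.19 to our geometric setting, so the proof reduces to verifying that the hypotheses of that proposition are in force. First, the contact condition guarantees that $d\b$ restricts to a fiberwise symplectic form on the contact distribution $\xi_\b = \ker\b$. Second, because a basic $k$-form $\a$ satisfies $v_\b \rfloor \a = 0$, it is canonically a section of $\bigwedge^k \xi_\b^\ast$, and the basicness condition $\mathcal L_{v_\b}\a = 0$ cuts out the relevant subspace of such sections. Third, both the Lefschetz operator $L = d\b \wedge \sim$ and its symplectic adjoint $\L = (\star_{d\b})\circ L \circ (\star_{d\b})$ preserve $\Om^\ast_{\mathsf b}(X, v_\b)$: the former because $d\b$ is itself basic, the latter because it is built out of $\star_{\mathsf b}$ and $J_{\mathsf b}$, which are defined precisely to respect the basic subcomplex.

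Once this is in place, I would invoke the pointwise $sl(2)$-theory on each fiber of $\xi_\b$. On a $2n$-dimensional symplectic vector space, the operators $L_x$, $\L_x$, and $H_x =_{\mathsf{def}} [L_x, \L_x]$ satisfy the standard $sl(2)$-commutation relations, with $H_x$ acting as $(k-n)\,\mathrm{Id}$ on degree $k$. The representation theory of $sl(2)$ then decomposes $\bigwedge^\bullet \xi_{\b, x}^\ast$ into irreducible modules, each generated by a primitive vector (one killed by $\L$), giving the fiberwise decomposition $\a_x = \sum_{i=0}^{\lceil k/2 \rceil} L_x^i \, \rho_{k-2i}(x)$ with $\rho_{k-2i}(x)$ primitive. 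To extract the explicit coefficients $a_{i,r}$, I would apply $\L^{i+r}$ to both sides of this decomposition and repeatedly use the identity $[\L, L^j] = j(H + j - 1) L^{j-1}$ to reduce everything to polynomial expressions in the $\rho_{k-2i}$. The resulting upper-triangular linear system is solvable with coefficients depending only on $(n, i, r)$, producing the displayed closed form for $\rho_{k-2i}$.

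Globalization is then immediate: the formula $\rho_{k-2i} = \big(\sum_r \tfrac{a_{i,r}}{r!}\, L^r \circ \L^{i+r}\big)\a$ is algebraic in the operators $L$ and $\L$, which are smooth and preserve basicness, so each $\rho_{k-2i}$ is automatically a smooth basic primitive form whenever $\a$ is basic. Uniqueness follows from the isotypic nature of the fiberwise $sl(2)$-decomposition. The main obstacle I anticipate is not the representation theory, which is classical, but the careful verification that $\star_{d\b}$ genuinely sends basic forms to basic forms under the compatibility conditions (\ref{eq.compatibility_A})--(\ref{eq.compatibility_C}), together with the combinatorial check that the upper-triangular system admits the claimed universal solution with constants independent of the particular basic form $\a$.
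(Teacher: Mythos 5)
Your proposal is correct and follows essentially the same route as the paper, which states this proposition as a direct application of Ra\'{z}ny's Proposition 2.19 to the Reeb foliation, implicitly relying on exactly the verifications you spell out (that $d\b$ is symplectic on $\xi_\b$, that basic forms are sections of $\L^\ast\xi_\b^\ast$, and that $L$, $\L$, $\star_{d\b}$ preserve the basic subcomplex because $d\b$, $g$, and $J_{\mathsf b}$ are $v_\b$-invariant). Your additional fiberwise $sl(2)$ sketch with the universal coefficients $a_{i,r}$ simply reproduces the content of the cited result rather than deviating from the paper's approach.
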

 
As Theorem 2.20 from  \cite{Raz}, Proposition \ref{prop.PRIMITIVE_decomposition} leads to the following claim:
 
 \begin{theorem}\label{th.PRIMITIVE_decomposition}  Assume that a closed oriented smooth $(2n+1)$-dimensional manifold $X$, equipped with a contact $1$-form $\b$ and an operator $J_{\mathsf b}: \xi_\b \to \xi_\b$ as in (\ref{eq.compatibility_C}), is such that the Reeb foliation $\mathcal F(v_\b)$ satisfies the Basic Hard Lefschetz property. 
 Let $[\a]$ be a basic cohomology class of degree $k$. \smallskip
 
 Then $[\a]$ can be uniquely decomposed as 
 $$[\a] =  \sum_{i=0}^{\lceil k/2 \rceil} [\rho_{k -2i}]  \wedge [d\b]^i,$$
 where $\{[\rho_{k -2i}]\}$ are $v_\b$-basic \emph{primitive} $(k -2i)$-cohomology classes.
 \hfill $\diamondsuit$
 \end{theorem}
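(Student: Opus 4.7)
The plan is to lift Proposition \ref{prop.PRIMITIVE_decomposition} from the level of basic forms to the level of basic cohomology, with the Basic Hard Lefschetz hypothesis serving as the essential bridge. First, by Theorem \ref{th.Basic_hard_Lefschetz}, every class $[\a] \in H^k_{\mathsf{basic}\,d\mathcal R}(X, v_\b)$ admits a representative $\a$ that is not only $d$-closed but also $\delta_{d\b}$-closed, i.e., a \emph{symplectically harmonic} basic form.

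Next, I would apply Proposition \ref{prop.PRIMITIVE_decomposition} to this chosen representative to obtain the form-level primitive decomposition
\[
\a = \sum_{i=0}^{\lceil k/2\rceil} \rho_{k-2i}\wedge (d\b)^i,
\]
with explicit expressions for $\rho_{k-2i}$ as polynomials in the Lefschetz--Weyl operators $L$ and $\L$ applied to $\a$. The key commutation facts, which in the foliated setting are the transversal symplectic-K\"ahler identities for the Reeb foliation established in \cite{Raz}, are that $L$ commutes with $d$ (because $d(d\b)=0$) and that the bracket $[d,\L]$ is proportional to $\delta_{d\b}$. Consequently, both $L$ and $\L$ preserve the space of symplectically harmonic basic forms. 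Applied iteratively, this shows that each primitive component $\rho_{k-2i}$ is itself symplectically harmonic, and in particular $d$-closed. Passing to cohomology then produces the asserted decomposition with each $[\rho_{k-2i}]$ represented by a primitive basic form, hence a primitive basic cohomology class (since $L^{k+1}\rho_{k-2i}=0$ persists under $d$).

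For uniqueness, I would argue in the standard representation-theoretic style: suppose the difference of two decompositions of $[\a]$ yields $0 = \sum_i [\rho_{k-2i}]\wedge [d\b]^i$ with each $[\rho_{k-2i}]$ primitive. The Basic Hard Lefschetz isomorphisms $L^j : H^{n-j}_{\mathsf{basic}\,d\mathcal R}(X, v_\b)\to H^{n+j}_{\mathsf{basic}\,d\mathcal R}(X, v_\b)$ assemble the operators $L$, $\L$, and the grading into an action by the Lie algebra of $SL_2(\R)$ on $H^\ast_{\mathsf{basic}\,d\mathcal R}(X, v_\b)$ whose irreducible summands are generated by their primitive highest-weight vectors. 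Multiplying the putative relation by suitable powers of $L$ and invoking the injectivity portion of Hard Lefschetz allows one to peel off the terms of highest primitive degree first and then inductively deduce that every $[\rho_{k-2i}]=0$.

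The main obstacle is the verification in step 3 that primitive components of a symplectically harmonic basic form remain $d$-closed. This relies on the transversal symplectic-K\"ahler identities for the basic de Rham complex of the Reeb foliation, which are precisely the content of Ra\'zny's setup in \cite{Raz}; once those identities are in hand, the existence half follows formally, and the uniqueness half reduces to elementary $SL_2(\R)$-representation theory on $H^\ast_{\mathsf{basic}\,d\mathcal R}(X, v_\b)$ enabled by Basic Hard Lefschetz.
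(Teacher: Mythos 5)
Your proposal is correct and follows essentially the route the paper itself relies on: the paper gives no independent argument, deducing the theorem by citing Ra\'zny's Theorem 2.20, whose proof is exactly your scheme --- use Theorem \ref{th.Basic_hard_Lefschetz} to pick a representative that is both $d$- and $\delta_{d\b}$-closed, apply the form-level decomposition of Proposition \ref{prop.PRIMITIVE_decomposition}, observe that $L$ and $\L$ preserve symplectically harmonic basic forms, and obtain uniqueness from the Hard Lefschetz/$\mathfrak{sl}_2$ structure on the (finite-dimensional) basic cohomology. The only detail worth making explicit is that preserving harmonicity requires all four transversal K\"ahler-type brackets ($[d,L]=0$, $[\delta_{d\b},\L]=0$, $[d,\L]\propto\delta_{d\b}$, $[\delta_{d\b},L]\propto d$), which are indeed available in the foliated setting of \cite{Raz}.
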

 
In turn, Ra\'{z}ny's Theorem 2.20, \cite{Raz}, leads to another important conceptual result, Corollary 4.4 \cite{Raz}. It claims (crudely) that homeomorphic manifolds, both satisfying the Basic Hard Lefschetz property for the appropriate foliations of the same dimension, have isomorphic basic de Rham cohomologies! Applying this result to Reeb foliations, we get the following assertion:
 
 \begin{theorem}\label{th.Lefschetz_implies_iso_homology} 
 Let $\b_0, \b_1$ be two contact forms on a closed manifold $X$ and  $J_{\mathsf b_0},  J_{\mathsf b_1}$ be two almost complex structure operators as in (\ref{eq.compatibility_C}), so that both Reeb foliations, $\mathcal F(v_{\b_0})$ and $\mathcal F(v_{\b_1})$, posess the Basic Hard Lefschetz property. \smallskip
 
Then, for each $k$, their basic de Rham $k$-cohomologies  are isomorphic $$H^k_{\mathsf{basic}\,d\mathcal{R}}(X,  v_{\b_1}) \approx H^k_{\mathsf{basic}\,d\mathcal{R}}(X,  v_{\b_0}),
$$ 
 and so are the spaces of basic symplectically harmonic $k$-forms:
 $$\ker(\Delta_{d\b_1}^k(v_{\b_1})) \approx \ker(\Delta_{d\b_0}^k(v_{\b_0})). \quad \diamondsuit$$
 \end{theorem}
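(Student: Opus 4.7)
The plan is to deduce both conclusions from Ra\'{z}ny's Corollary 4.4 in \cite{Raz}, applied to the identity map $\mathrm{id}\colon X \to X$ viewed as a homeomorphism between two copies of $X$, each endowed with a transversely almost-K\"{a}hler $1$-dimensional foliation. First I would verify that the data $(X, \mathcal{F}(v_{\b_i}), d\b_i, J_{\mathsf b_i})$ for $i=0,1$ fit Ra\'{z}ny's framework: the foliations $\mathcal{F}(v_{\b_i})$ are $1$-dimensional, the transverse symplectic form on $\xi_{\b_i} = \ker(\b_i)$ is $d\b_i$, and the compatible transverse almost complex structure is $J_{\mathsf b_i}$. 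Together these produce the transversely almost-K\"{a}hler package with respect to which the basic Hodge star $\star_{\mathsf b}$, the symplectic basic Hodge star $\star_{d\b_i}$, and the symplectic basic Laplacian $\Delta_{d\b_i}$ of (\ref{eq.symplectic Hodge})--(\ref{eq.symplectic Lefschetz}) are defined.

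Next, since both Reeb foliations satisfy the Basic Hard Lefschetz property by hypothesis, Ra\'{z}ny's Corollary 4.4, which asserts that under this property the basic de Rham cohomology of a transversely almost-K\"{a}hler foliation of fixed codimension is an invariant of the homeomorphism type of the ambient manifold, applies with $\mathrm{id}_X$. This yields the first isomorphism
$$H^k_{\mathsf{basic}\,d\mathcal{R}}(X, v_{\b_1}) \approx H^k_{\mathsf{basic}\,d\mathcal{R}}(X, v_{\b_0})$$
for every $k$, together with canonical naturality in the underlying topological data.

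To pass from basic cohomology to symplectically harmonic forms, I would invoke the equivalence in Theorem \ref{th.Basic_hard_Lefschetz}: under the Basic Hard Lefschetz property every basic cohomology class has a $\delta_{d\b_i}$-closed representative, and by Ra\'{z}ny's $\delta_{d\b}$-version of the Hodge decomposition (his Theorem 2.20 and its proof), this representative is unique in $\ker(\Delta^k_{d\b_i}(v_{\b_i}))$. That gives canonical identifications
$$H^k_{\mathsf{basic}\,d\mathcal{R}}(X, v_{\b_i}) \approx \ker(\Delta^k_{d\b_i}(v_{\b_i})), \qquad i = 0, 1,$$
and composing with the isomorphism from the previous paragraph yields the desired isomorphism $\ker(\Delta_{d\b_1}^k(v_{\b_1})) \approx \ker(\Delta_{d\b_0}^k(v_{\b_0}))$.

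The main obstacle I anticipate is matching Ra\'{z}ny's general foliation-theoretic hypotheses to the Reeb setting. Specifically, one must check that the $v_{\b_i}$-invariant Riemannian metrics determined by (\ref{eq.compatibility_A})--(\ref{eq.compatibility_C}) are bundle-like for $\mathcal{F}(v_{\b_i})$ and that the Reeb foliation is Riemannian in the sense required by Ra\'{z}ny, and that no additional tautness or transverse orientability condition beyond what is supplied here is silently assumed in his Corollary 4.4. Once these technical identifications are confirmed, the rest is a transcription of Ra\'{z}ny's statements into the contact-geometric language of the present paper.
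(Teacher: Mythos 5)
Your proposal follows essentially the same route as the paper: the paper offers no independent argument, stating the theorem verbatim as a specialization of Ra\'{z}ny's Theorem 2.20 and Corollary 4.4 to the $1$-dimensional Reeb foliations $\mathcal F(v_{\b_0})$, $\mathcal F(v_{\b_1})$ with the transverse almost-K\"ahler data $(d\b_i, J_{\mathsf b_i})$, which is exactly what you do via the identity homeomorphism. Your extra bookkeeping (verifying the Riemannian/bundle-like hypotheses and routing the harmonic-forms claim through Theorem \ref{th.Basic_hard_Lefschetz} and Ra\'{z}ny's Theorem 2.20) elaborates, but does not deviate from, the citation-based derivation the paper records.
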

 
 

\begin{proposition}\label{prop.induced_by_Psi} Let $X, Y$ be equidimensional compact smooth manifolds, and $\Psi: X \to Y$ a smooth immersion. Let $v_Y \neq 0$ be a vector field on $Y$. Put $v_X =_{\mathsf{def}} \Psi^\dagger(v_Y)$, the transfer of $v_Y$ to $X$. 
\smallskip

Then $\Psi$ induces  homomorphisms $$\Psi^\ast_{\mathsf b}: \Om^\ast_{\mathsf{basic}\,d\mathcal{R}}(Y, v_Y) \to \Om^\ast_{\mathsf{basic}\,d\mathcal{R}}(X, v_X)$$ of the basic differential complexes. Their basic de Rham and regular de Rham cohomologies form a commutative diagram:
\begin{eqnarray}
\Psi^\ast_{{\mathsf b}, H}:& H^\ast_{\mathsf{basic}\,d\mathcal{R}}(Y, v_Y) \to H^\ast_{\mathsf{basic}\,d\mathcal{R}}(X, v_X). \nonumber\\
\quad\quad \quad\quad \quad \quad\quad \quad & \downarrow  \quad \quad \quad \quad \downarrow                                                                          \nonumber\\
\Psi^\ast_H:& H^\ast_{d\mathcal{R}}(Y)\quad \quad \to \quad H^\ast_{d\mathcal{R}}(X). \nonumber
\end{eqnarray} 

If $v_Y$  admits a Lyapunov function, then so does $v_X$, and the lower row of the diagram can be replaced by the $\Psi$-induced homomorphism in the singular homology
$$\Psi^\ast_{\mathcal T,\, H}: H^\ast(\mathcal T(v_Y); \R) \to H^\ast(\mathcal T(v_X); \R)$$
of  the trajectory spaces of $v_Y$ and $v_X$ flows.
\end{proposition}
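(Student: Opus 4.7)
Since $\Psi$ is an immersion between equidimensional manifolds, its differential $\Psi_\ast: T_xX \to T_{\Psi(x)}Y$ is a pointwise isomorphism, so $\Psi$ is a local diffeomorphism. I would first make sense of the transferred field: set $v_X(x) =_{\mathsf{def}} (\Psi_\ast)^{-1}\big(v_Y(\Psi(x))\big)$, which is smooth and non-vanishing and satisfies the intertwining relation $\Psi_\ast v_X = v_Y \circ \Psi$. The foundational calculation is then that for any $\alpha \in \Omega^\ast(Y)$, the naturality of contraction and exterior differentiation gives
\begin{equation*}
v_X \,\rfloor\, \Psi^\ast\alpha \;=\; \Psi^\ast(v_Y \,\rfloor\, \alpha), \qquad d(\Psi^\ast\alpha) \;=\; \Psi^\ast(d\alpha).
\end{equation*}
Consequently, if both $v_Y \,\rfloor\, \alpha = 0$ and $v_Y \,\rfloor\, d\alpha = 0$, then the same holds for $\Psi^\ast\alpha$, so $\Psi^\ast$ restricts to a chain map $\Psi^\ast_{\mathsf b}: \Omega^\ast_{\mathsf{basic}\,d\mathcal{R}}(Y, v_Y) \to \Omega^\ast_{\mathsf{basic}\,d\mathcal{R}}(X, v_X)$ of $\Omega^0_{\mathsf b}$-module complexes.

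Next I would pass to cohomology. Passing to cohomology produces $\Psi^\ast_{{\mathsf b},H}$ on basic de Rham cohomology and $\Psi^\ast_H$ on ordinary de Rham cohomology. The vertical arrows are the canonical maps induced by the inclusion of basic forms into all forms (see the paragraph preceding Definition~\ref{def.de Rham}). Since $\Psi^\ast$ is defined identically on basic forms and on all forms (only the domain of definition changes), the square commutes on the level of complexes and thus on cohomology. This establishes the first assertion and the commutative diagram.

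For the Lyapunov part, suppose $f_Y \in C^\infty(Y,\R)$ satisfies $df_Y(v_Y) > 0$ on $Y$. Set $f_X =_{\mathsf{def}} \Psi^\ast f_Y = f_Y \circ \Psi$. Using $\Psi_\ast v_X = v_Y \circ \Psi$, one computes
\begin{equation*}
df_X(v_X) \;=\; (\Psi^\ast df_Y)(v_X) \;=\; df_Y(\Psi_\ast v_X) \;=\; \big(df_Y(v_Y)\big) \circ \Psi \;>\; 0,
\end{equation*}
so $f_X$ is a Lyapunov function for $v_X$ and in particular $v_X$ is traversing. The intertwining property also shows that $\Psi$ sends integral curves of $v_X$ to integral curves of $v_Y$, so it descends to a continuous map $\bar\Psi: \mathcal T(v_X) \to \mathcal T(v_Y)$ between trajectory spaces, inducing $\Psi^\ast_{\mathcal T, H}$ on singular cohomology.

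The main obstacle, and the step I would treat with the most care, is the final replacement of the lower row of the diagram by the trajectory-space map. This requires a natural comparison map from basic de Rham cohomology to the singular cohomology of the trajectory space that is functorial with respect to $\bar\Psi$; its existence for traversing (and boundary generic) vector fields is the content of the circle of ideas in \cite{K3} and \cite{CTGM}, in the spirit of Conjecture~\ref{conj.HARMONIC_with_boundary}. Once this comparison is in place, functoriality of pullback on forms together with the fact that $\bar\Psi$ is induced by $\Psi$ itself yields the commutativity of the modified diagram, completing the proof.
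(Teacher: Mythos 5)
Most of your argument is correct and coincides with the paper's (much terser) proof: the chain map $\Psi^\ast_{\mathsf b}$ and the commuting square follow from the naturality of $d$ and of contraction, exactly as you compute, and your explicit pullback $f_X = f_Y\circ\Psi$ of a Lyapunov function (using $\Psi_\ast v_X = v_Y\circ\Psi$) supplies a detail the paper leaves implicit. The construction of $\bar\Psi:\mathcal T(v_X)\to\mathcal T(v_Y)$ and the direction of the induced cohomology map are also right.

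The gap is in your final step, where you both defer and reach for the wrong ingredient. Replacing the lower row does \emph{not} require a comparison map from \emph{basic} de Rham cohomology to the singular cohomology of the trajectory space, and it certainly cannot rest on the ``circle of ideas'' around Conjecture~\ref{conj.HARMONIC_with_boundary}, which is stated in this paper as an open conjecture. What is needed (and what the paper actually cites) is \cite{K1}, Theorem 5.1: for a vector field admitting a Lyapunov function, the quotient map $X\to\mathcal T(v)$ is a homology equivalence. Since $v_X$ and $v_Y$ are traversing by your pullback argument, the quotient maps $X\to\mathcal T(v_X)$ and $Y\to\mathcal T(v_Y)$ induce isomorphisms $H^\ast(\mathcal T(v_X);\R)\approx H^\ast(X;\R)\approx H^\ast_{d\mathcal R}(X)$ (the last by de Rham's theorem), and likewise for $Y$; these isomorphisms are compatible with $\Psi$ because the square formed by $\Psi$, $\bar\Psi$ and the two quotient maps commutes. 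That identifies $\Psi^\ast_H$ with $\Psi^\ast_{\mathcal T,H}$ and lets you substitute the trajectory-space row while keeping the same vertical arrows (composed with these identifications). As written, your proof leaves this substitution unestablished, since the only precise statement you invoke in its support is conjectural; citing the homology-equivalence theorem for traversing flows closes the gap.
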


\begin{proof} 
The first claim follows from 
the naturality of the boundary operator $d$ and the operator $v \rfloor $ under smooth transformations. 

The second claim is implied by \cite{K1}, Theorem 5.1,  which asserts that, for a  vector field $v$ that admits a Lyapunov function, the map $X \to \mathcal T(v)$ is a homology equivalence. 
\hfill
\end{proof}

For a contact form $\b$ and its Reeb vector field $v_\b$, the form $(d\b)^\ell \in \Om^{2\ell}(X, v_\b)$ is closed, $v_\b$-invariant, and horizontal. Therefore, it is basic and determines an element $[(d\b)^\ell] \in H^{2\ell}_{\mathsf{basic}\,d\mathcal{R}}(X, v_\b)$. 
%
Note that if $[(d\b)^\ell] = 0$, then  $[(d\b)^k] = 0$ for all $k > \ell$. 

\begin{lemma}\label{lem.naturality_in_DR} Let $X, Y$ be equidimensional compact smooth manifolds that carry contact forms $\b_X, \b_Y$. Let $\Psi: X \to Y$ be a smooth immersion such that $\Psi^\ast(\b_Y) = \b_X$. 

Then, for any $\ell \in [0, n]$, we have the equality $$\Psi^\ast([(d\b_Y)^\ell]) = [(d\b_X)^\ell]$$ in $H^{2\ell}_{\mathsf{basic}\,d\mathcal{R}}(X, v_X)$.
\end{lemma}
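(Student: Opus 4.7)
The plan is to exploit the naturality of pullback with respect to exterior differentiation and the wedge product; the statement is essentially a formal consequence of $\Psi^\ast\b_Y = \b_X$ once we identify $v_X$ with the Reeb field of $\b_X$.

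First I would verify that the transferred field $v_X = \Psi^\dagger(v_Y)$, characterized by $\Psi_\ast(v_X(x)) = v_Y(\Psi(x))$ (well-defined because $\Psi$ is an immersion between equidimensional manifolds, hence a local diffeomorphism), coincides with the Reeb vector field of $\b_X$. Using the general identity $v \,\rfloor\, \Psi^\ast\eta = \Psi^\ast\bigl((\Psi_\ast v)\,\rfloor\,\eta\bigr)$, together with $\Psi^\ast\b_Y = \b_X$, I get
\[
\b_X(v_X) = \b_Y(\Psi_\ast v_X) = \b_Y(v_Y) = 1,
\qquad
v_X\,\rfloor\, d\b_X = v_X\,\rfloor\,\Psi^\ast(d\b_Y) = \Psi^\ast(v_Y\,\rfloor\, d\b_Y) = 0,
\]
which identifies $v_X$ as the Reeb field of $\b_X$.

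Next I would apply the naturality of the exterior derivative and the multiplicativity of pullback on wedge products:
\[
\Psi^\ast\bigl((d\b_Y)^\ell\bigr) = \bigl(\Psi^\ast d\b_Y\bigr)^\ell = \bigl(d\,\Psi^\ast\b_Y\bigr)^\ell = (d\b_X)^\ell.
\]
Both forms $(d\b_Y)^\ell$ and $(d\b_X)^\ell$ are closed and horizontal with respect to their respective Reeb fields (the horizontality of the powers follows from $v\,\rfloor\,d\b = 0$ via the graded Leibniz rule for contraction), hence basic, and they represent classes in $H^{2\ell}_{\mathsf{basic}\,d\mathcal{R}}(Y, v_Y)$ and $H^{2\ell}_{\mathsf{basic}\,d\mathcal{R}}(X, v_X)$ respectively.

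Finally, invoking Proposition \ref{prop.induced_by_Psi}, the pullback $\Psi^\ast_{\mathsf b}$ is a chain map between the basic de Rham complexes, so it descends to a map $\Psi^\ast_{{\mathsf b},H}$ on cohomology satisfying $\Psi^\ast_{{\mathsf b},H}[\omega] = [\Psi^\ast \omega]$ for every basic cocycle $\omega$. Applied to $\omega = (d\b_Y)^\ell$, this yields
\[
\Psi^\ast_{{\mathsf b},H}\bigl([(d\b_Y)^\ell]\bigr) = \bigl[\Psi^\ast((d\b_Y)^\ell)\bigr] = \bigl[(d\b_X)^\ell\bigr],
\]
which is the desired identity. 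There is no real obstacle beyond checking the preliminary identification of $v_X$ with the Reeb field of $\b_X$; once $\Psi^\ast\b_Y = \b_X$ is in hand, everything is forced by functoriality.
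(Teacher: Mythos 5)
Your proof is correct and follows essentially the same route as the paper: identify the (transferred) field $v_X$ with the Reeb field of $\b_X$ via $\Psi^\ast\b_Y = \b_X$, then conclude by naturality of $\Psi^\ast$ with respect to $d$, the wedge product, and contraction on basic forms. Your version merely spells out the Reeb-field identification and the chain-map property that the paper states without computation.
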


\begin{proof}
The property $\Psi^\ast(\b_Y) = \b_X$ implies that $\Psi_\ast$ maps the Reeb vector field $v_{\b_X}$ the to the Reeb vector field $v_{\b_Y}$. By naturality, the pull-back of a $v_{\b_Y}$-basic differential form on $Y$ is a $v_{\b_X}$-basic differential form on $X$, and the differential operators $d_X$ and $d_Y$ on forms commute with $\Psi^\ast$. Therefore, $\Psi^\ast([(d\b_Y)^\ell]) = [(d\b_X)^\ell]$ in $H^{2\ell}_{\mathsf{basic}\,d\mathcal{R}}(X, v_X)$.
\hfill
\end{proof}

\begin{corollary} Under the hypotheses of Lemma \ref{lem.naturality_in_DR}, if $[(d\b_X)^\ell] \neq 0$ in $H^{2\ell}_{\mathsf{basic}\,d\mathcal{R}}(X, v_{\b_X})$ and $[(d\b_Y)^k] = 0$ in $H^{2k}_{\mathsf{basic}\,d\mathcal{R}}(Y, v_{\b_Y})$ for $k \leq \ell$, then no immersion $\Psi$, subject to $\Psi^\ast(\b_Y) = \b_X$, exists. 
\hfill $\diamondsuit$
\end{corollary}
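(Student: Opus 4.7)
The plan is to argue by contradiction using the naturality statement of Lemma \ref{lem.naturality_in_DR} applied with $k=\ell$. Suppose, to the contrary, that a smooth immersion $\Psi: X \to Y$ satisfying $\Psi^\ast(\b_Y) = \b_X$ does exist. Then Lemma \ref{lem.naturality_in_DR} is applicable and yields the identity
\[
\Psi^\ast\bigl([(d\b_Y)^\ell]\bigr) \;=\; [(d\b_X)^\ell]
\]
as elements of $H^{2\ell}_{\mathsf{basic}\,d\mathcal{R}}(X, v_{\b_X})$.

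By hypothesis, $[(d\b_Y)^k] = 0$ in $H^{2k}_{\mathsf{basic}\,d\mathcal{R}}(Y, v_{\b_Y})$ for every $k \leq \ell$; in particular, specializing to $k = \ell$, the class $[(d\b_Y)^\ell]$ is trivial. Since $\Psi^\ast$ is a homomorphism, it sends the zero class to the zero class, so the displayed equality forces $[(d\b_X)^\ell] = 0$. This contradicts the standing assumption that $[(d\b_X)^\ell] \neq 0$ in $H^{2\ell}_{\mathsf{basic}\,d\mathcal{R}}(X, v_{\b_X})$.

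Therefore no immersion $\Psi$ subject to $\Psi^\ast(\b_Y) = \b_X$ can exist. There is really no obstacle to this argument at all: the entire content lies in Lemma \ref{lem.naturality_in_DR}, whose job is precisely to establish the naturality of the class $[(d\b)^\ell]$ under $\b$-preserving immersions; once that is in place, the corollary is a one-line application of the contrapositive. The extra hypotheses for indices $k < \ell$ play no role in this particular deduction, but they are stated for emphasis (noting the monotonicity observed just before Lemma \ref{lem.naturality_in_DR}: if $[(d\b_Y)^\ell] = 0$ then automatically $[(d\b_Y)^k] = 0$ for all $k \geq \ell$, whereas vanishing for small $k$ is a genuinely stronger condition).
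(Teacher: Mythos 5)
Your argument is correct and is exactly the intended one: the paper leaves this corollary without proof (it is an immediate contrapositive of Lemma \ref{lem.naturality_in_DR}, using only the case $k=\ell$), and your contradiction via $\Psi^\ast([(d\b_Y)^\ell]) = [(d\b_X)^\ell]$ together with $\Psi^\ast(0)=0$ is precisely that deduction. Your closing remark about the redundancy of the hypotheses for $k<\ell$, in light of the monotonicity noted just before the lemma, is also accurate.
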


Let us conclude with a conjecture, weakly motivated by \cite{Raz}, Corollary 4.4. Recall that the corollary claims that \emph{homeomorphic} manifolds, both satisfying the Basic Hard Lefschetz property for the appropriate foliations of the same dimension, have isomorphic basic de Rham cohomologies. In other words, for closed smooth manifolds that satisfy the Basic Hard Lefschetz property, the basic de Rham cohomology is a topological invariant! 

\begin{conjecture} Let $X$ be a compact connected smooth manifold and $v$ a (non-vanishing) vector field on $X$. Assume that $v$ admits a Lyapunov function and is transversally generic in the sense of \cite{K1} (see also \cite{Mo}). 

Then the $v$-basic de Rham cohomology $H^\ast_{\mathsf{basic}\,d\mathcal{R}}(X, v)$ depends only on the stratified topological type of the trajectory space $\mathcal T(v)$. 
\hfill $\diamondsuit$
\end{conjecture}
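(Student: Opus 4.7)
The plan is to construct a de Rham-type complex intrinsic to the stratified trajectory space $\mathcal T(v)$ and to show that the tautological ``leaf-space quotient'' $\pi: X \to \mathcal T(v)$ induces a chain equivalence from this intrinsic complex to $\Om^\ast_{\mathsf{basic}\,d\mathcal{R}}(X, v)$. Since the target complex on $\mathcal T(v)$ will be manifestly defined in terms of the stratified topological data, the invariance claim will follow.

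First I would use the hypotheses carefully: because $v$ admits a Lyapunov function, by Lemma~\ref{lem.NO_Lyapunov_for_closed} and the discussion around Corollary~\ref{cor.dtau(v) = eta}, every $v$-trajectory is a closed segment (or singleton) with endpoints in $\d X$, so $\mathcal T(v)$ is compact and Hausdorff. The transversal genericity assumption in the sense of \cite{K1} (see also \cite{Mo}) endows $\mathcal T(v)$ with a canonical Whitney-style stratification whose strata $\mathcal S_j$ are smooth manifolds corresponding to combinatorial types of tangency of trajectories with $\d X$. On each stratum $\mathcal S_j$ the preimage $\pi^{-1}(\mathcal S_j)$ is a locally trivial $\R$- or interval-bundle over $\mathcal S_j$, and a basic form restricted there descends to an ordinary smooth form on $\mathcal S_j$.

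Next I would define a complex $\Om^\ast_{\mathsf{strat}}(\mathcal T(v))$ whose $k$-cochains are collections $\{\omega_j \in \Om^k(\mathcal S_j)\}_j$ that extend continuously across strata in a controlled way---say, by requiring that the germ at each stratified point match the limit values predicted by the local conical structure of $\mathcal T(v)$. The matching conditions are precisely the ``boundedness along trajectories'' phenomena analyzed in Lemma~\ref{lem.Lv(tau) = eta} and Corollary~\ref{cor.dtau(v) = eta}: a basic form on $X$ which is horizontal and $v$-invariant must extend continuously along trapped approaches, and such extendability is equivalent to the compatibility across strata of the quotient. The map $\pi^\ast$ (or rather its right inverse produced by integration along fibers of $\pi$ over each stratum) would then be set up as a chain map $\Om^\ast_{\mathsf{strat}}(\mathcal T(v)) \to \Om^\ast_{\mathsf{basic}\,d\mathcal{R}}(X, v)$. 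Using the half-open interval structure of fibers and a partition of unity subordinate to the stratification, I would show this chain map is a quasi-isomorphism, imitating the argument of Theorem~5.1 of \cite{K1} (the map $X \to \mathcal T(v)$ is a homology equivalence), but now in the basic/stratified setting.

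Finally, I would argue that $H^\ast(\Om^\ast_{\mathsf{strat}}(\mathcal T(v)))$ is an invariant of the stratified topological type of $\mathcal T(v)$. Here I would invoke a stratified de Rham theorem identifying $H^\ast_{\mathsf{strat}}$ with, for example, Goresky--MacPherson-style cohomology of the stratified space, or with \v Cech cohomology of the nerve of an adapted stratified open cover. This is the step I expect to be the \emph{main obstacle}: unlike the smooth case, there is no off-the-shelf de Rham theorem for an arbitrary Whitney stratified space, and one must either (i) prove directly that a stratified homeomorphism $\Phi: \mathcal T(v) \to \mathcal T(v')$ carries controlled collections $\{\omega_j\}$ to controlled collections (which requires a smoothing/approximation of $\Phi$ compatible with the conical structure), or (ii) build an intermediate singular--stratified bridge complex and show both sides map to it quasi-isomorphically. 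Once this stratified de Rham invariance is established, combining it with the quasi-isomorphism $\Om^\ast_{\mathsf{strat}}(\mathcal T(v)) \simeq \Om^\ast_{\mathsf{basic}\,d\mathcal{R}}(X, v)$ of the previous step yields the conjecture.
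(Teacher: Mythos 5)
This statement is stated in the paper as a \emph{conjecture}; the paper offers no proof of it, so there is no argument of the author's to compare yours against. The question is therefore whether your proposal actually closes the conjecture, and it does not: it is a strategy outline with an unresolved step that you yourself flag as the ``main obstacle,'' namely showing that the cohomology of your controlled stratified complex $\Om^\ast_{\mathsf{strat}}(\mathcal T(v))$ is an invariant of the stratified \emph{topological} type of $\mathcal T(v)$. That step is not a routine technicality: de Rham-type invariants built from smooth data are a priori only smooth invariants, and the paper's own motivation (Ra\'{z}ny's Corollary 4.4, invoked in Theorem \ref{th.Lefschetz_implies_iso_homology}) shows that even in the closed case topological invariance of basic cohomology is only known under a strong extra hypothesis (the Basic Hard Lefschetz property). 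A stratified homeomorphism of $\mathcal T(v)$ need not be smoothable stratum-by-stratum in a way compatible with the conical control data, so option (i) of your final paragraph is itself a hard open problem, and option (ii) presupposes a stratified de Rham theorem that you have not supplied.

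There are also gaps earlier in the outline. The intermediate claim that the inclusion of strata-wise descended forms into $\Om^\ast_{\mathsf{basic}\,d\mathcal{R}}(X,v)$ is a quasi-isomorphism is asserted by analogy with Theorem 5.1 of \cite{K1}, but that theorem concerns singular (co)homology of $X$ versus $\mathcal T(v)$, not the basic de Rham complex; bridging the two is essentially the content of the paper's other open Conjecture \ref{conj.HARMONIC_with_boundary}, so it cannot be cited as if settled. Moreover, your appeal to Lemma \ref{lem.Lv(tau) = eta} and Corollary \ref{cor.dtau(v) = eta} to justify the ``matching conditions across strata'' is misplaced: those results analyze horizontal forms $\tau$ satisfying $\mathcal L_v\tau=\eta$ (anti-derivatives along the flow), whereas a basic form is $v$-invariant, so the only relevant part of that lemma is the trivial statement that its values on flow-invariant frames are constant in $t$; it says nothing about continuity or compatibility of the descended forms at the tangency strata of $\mathcal T(v)$, which is where the real analytic difficulty of the descent lives. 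As it stands, the proposal is a reasonable research plan but leaves the conjecture open.
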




\begin{thebibliography}{30}

\bibitem [CTGM]{CTGM} Cappell, S., DeTurk, D., Gluck, H., Miller, E., {\it Cohomology of Harmonic Forms on Riemannian Manifolds With Boundary}, arXiv:math/0508372v1 [math.DG], 19 Aug 2005.

\bibitem [EKH]{EKH} El Kacimi-Alaoui, A., Hector, G. {\it D\"{e}composition de Hodge basique pour un feuilletage riemannien}, Ann. Inst. Fourier 36, 207-227 (1987).

\bibitem [EKA]{EKA} El Kacimi-Alaoui, A., {\it Op\'{e}rateurs transversalement elliptiques sur un feuilletage riemannien et applications}. Compos. Math. 73, 57-106 (1990).

\bibitem [F]{F} Farber, M., {\it Topology of Closed One-Forms}, Mathematical Surveys and Monographs 108, AMS, 2004.


\bibitem [Good]{Good} Goodman, S., {\it Vector Fields with Transversal Foliations}, Topology, vol.24, no. 3, 333-340 (1985). 

\bibitem [Gray]{Gray} Gray, J.W., {\it Some global properties of contact structures}, Ann. of Math. (2) 69 (1959), 421-450.

\bibitem [He]{He} He, Z., {\it Odd dimenisonal symplectic manifolds}, MIT Ph.D thesis, 2010.

\bibitem[K1]{K1} Katz, G., {\it  Traversally Generic \& Versal Flows: Semi-algebraic Models of Tangency to the Boundary}, Asian J. of Math., vol. 21, No. 1 (2017), 127-168 (arXiv:1407.1345v1 [mathGT] 4 July, 2014)).



\bibitem [K2]{K2} Katz, G., {\it Causal Holography in Application to the Inverse Scattering Problem}, Inverse Problems and Imaging J., June 2019, 13(3), 597-633 (arXiv: 1703.08874v1 [Math.GT], 27 Mar 2017).


\bibitem [K3]{K3} Katz, G., {\it Morse Theory of Gradient Flows, Concavity, and Complexity on Manifolds with Boundary}, World Scientific, (2019). ISBN 978-981-4368-75-9

\bibitem [K4]{K4} Katz, G.,  {\it Causal Holography of Traversing Flows},  Journal of Dynamics and Differential Equations (2020)  https://doi.org/10.1007/s10884-020-09910-y 


\bibitem [K5]{K5} Katz, G., {\it Recovering Contact Forms from Boundary Data}, arXiv:2309.14604 v3 [math.SG] 11 Mar 2024.

\bibitem [K6]{K6} Katz, G., {\it Contact de Rham Cohomology and Hodge Structures Transversal to the Reeb Foliations}, arXiv: 2502.0977.09773v1 [math.DG] 13 Feb 2025.






\bibitem [Lin]{Lin} Lin, Y., {\it Lefschetz Contact Manifolds and Odd Dimensional Symplectic Geometry}, arXiv: 1311.1431v4 [math SG] 2 Sep 2016.


\bibitem [Mo]{Mo} Morse, M. {\it Singular points of vector fields under  general boundary conditions}, Amer. J. Math. 51 (1929), 165-178.

\bibitem [Oh]{Oh} Oh, Y.G., {\it Foliation de Rham cohomology of Generic Reeb Foliations}, arXiv:2504.16453v2 [math.SG] 12 May 2025.



\bibitem [Raz]{Raz} Ra\'{z}ny, P., {\it Cohomology of Manifolds with Structure Group $\mathsf{U}(n) \times \mathsf{O}(s)$}, Geometriae Dedicata (2023) 217:58, https://doi.org/10.1007/s10711-023-00796-w.

\bibitem [Sch]{Sch} Schwartzman, {\it Asymptotic Cycles}, Ann. of Math., 66 (1957),  270-284.




\end{thebibliography}
\end{document}